\DeclareMathOperator{\R}{\mathbb{R}}
\DeclareMathOperator{\N}{\mathbb{N}}
\DeclareMathOperator{\E}{\mathcal{E}}
\newcommand{\curv}{\vec{\kappa}}
\renewcommand{\S}{\mathbb{S}}
\renewcommand{\H}{\mathbb{H}}
\newcommand{\Haus}{\mathcal{H}}
\newcommand{\W}{\mathcal{W}}
\newcommand{\Ll}{\mathcal{L}}
\newcommand{\ssubset}{\subset\joinrel\subset}
\newcommand{\diver}{\mathrm{div}}
\newcommand{\measurerestr}{%
  \,\raisebox{-.127ex}{\reflectbox{\rotatebox[origin=br]{-90}{$\lnot$}}}\,%
}
\newcommand*{\dd}{\mathop{}\!\mathrm{d}}
\def\nicefrac#1#2{%
    \raise.5ex\hbox{$#1$}%
    \kern-.15em/\kern-.05em%
    \lower.25ex\hbox{$#2$}}
\numberwithin{equation}{section}
\crefname{equation}{}{}
\theoremstyle{plain}
\newtheorem{lemma}{Lemma}[section]
\newtheorem{theorem}[lemma]{Theorem}
\newtheorem{proposition}[lemma]{Proposition}
\theoremstyle{definition}
\newtheorem{definition}[lemma]{Definition}
\newtheorem{remark}[lemma]{Remark}
\title{Global existence for the Willmore flow with boundary via Simon's Li-Yau inequality}
\author{\large{Manuel Schlierf\thanks{Institute of Applied Analysis, Ulm University, Helmholtzstra\ss e 18, 89081 Ulm, Germany. \texttt{manuel.schlierf@uni-ulm.de}}}}
\date{February 16, 2024}
\begin{document}
\maketitle
\begin{abstract}
\noindent\textbf{Abstract:} It is well-known that the Willmore flow of closed spherical immersions exists globally in time and converges if the initial datum has Willmore energy below $8\pi$ --- exactly the Li-Yau energy threshold below which all closed immersions are embedded. Extending the Li-Yau inequality for closed surfaces via Simon's monotonicity formula also for surfaces with boundary, given Dirichlet boundary conditions, one obtains an energy threshold $C_{\mathrm{LY}}$ below which surfaces with this boundary are embedded. By a slight modification, one obtains a threshold $C_{\mathrm{LY}}^{\mathrm{rot}}$ below which surfaces of revolution satisfying the boundary data have no self-intersections on the rotation axis.
 
With a new argument, using this modified Li-Yau inequality and tools from geometric measure theory, we show that the Willmore flow with Dirichlet boundary data starting in cylindrical surfaces of revolution exists globally in time if the energy of the initial datum is below $C_{\mathrm{LY}}^{\mathrm{rot}}$. Moreover, given Dirichlet boundary data, we also obtain the existence of a Willmore minimizer in the class of cylindrical surfaces of revolution if the corresponding infimum lies below $C_{\mathrm{LY}}^{\mathrm{rot}}$ which improves previous results for the stationary problem.
\end{abstract}

\bigskip
\noindent \textbf{Keywords:} Willmore flow, Willmore surfaces of revolution, Li-Yau inequality, monotonicity formula, geometric measure theory.

\noindent \textbf{MSC(2020)}: 53E40, 49Q10 (primary), 35B40, 35K41, 35J35 (secondary).

\section{Introduction}

With applications in modeling red blood cells in biology, cf. \cite{canham1970,helfrich1973}, or in general relativity, cf. \cite{hawking1968}, the Willmore functional is not only of interest to geometers but also in the field of applied mathematics. For an oriented, compact surface $\Sigma$ with or without boundary and an immersion $f\colon\Sigma\to\R^3$, one defines its \emph{Willmore energy}
\begin{equation}\label{eq:def-will-en}
    \W(f)=\int_{\Sigma} H^2\dd\mu_f
\end{equation}
where $\mu_f$ is the measure on $\Sigma$ induced by the pull-back metric $g_f=f^*\langle\cdot,\cdot\rangle$ of the Euclidean scalar product on $\R^3$. Further, writing $N$ for the smooth unit normal induced by the orientation of $\Sigma$, if $A_{ij}=\langle \partial^2_{ij}f,N\rangle$ are the components of the second fundamental form in local coordinates, then $H=\frac12 g^{ij}A_{ij}$ is the mean curvature of $f$. As usual, $(g^{ij})$ is the inverse of $(g_{ij})=(\langle\partial_if,\partial_jf\rangle)$.

In this article, we are concerned with the associated Dirichlet boundary value problem. To this end, suppose that $\partial\Sigma\neq\emptyset$ and denote by $\nu_f\colon\partial\Sigma\to\S^2\subseteq\R^3$ the unit outward-pointing co-normal field, cf. \cite[Proposition 2.17]{lee2018}. Then, given an immersion $b\colon\partial\Sigma\to\R^3$ of the boundary and $\eta\colon\partial\Sigma\to\S^2$ smooth with $\eta(x)\bot db(T_x\partial\Sigma)$ for all $x\in\partial\Sigma$, we are interested in the Dirichlet problem 
\begin{equation}\label{eq:stat-dir-prob}
    \begin{cases}
        \Delta H + |A^0|^2H = 0&\text{in $\Sigma$}\\
        f = b&\text{in $\partial\Sigma$}\\
        \nu_f=\eta&\text{in $\partial\Sigma$}
    \end{cases}
\end{equation}
where $\Delta$ is the Laplace-Beltrami operator with respect to $g_f$ and the trace-free second fundamental form $A^0$ is given by $A^0_{ij}=A_{ij}-g_{ij}H$ in local coordinates. Note that, in the literature, the boundary conditions in \cref{eq:stat-dir-prob} are also sometimes referred to as \emph{clamped} boundary conditions. Solutions to \cref{eq:stat-dir-prob} are exactly the stationary points of \cref{eq:def-will-en} and called \emph{Willmore surfaces}. 

Observe that \cref{eq:stat-dir-prob} is a fourth-order quasi-linear equation in $f$ which poses many analytical challenges, especially those due to the invariance of the Willmore functional with respect to conformal mappings in $\R^3$ which are smooth on $f(\Sigma)$.

In the literature, the partial differential equation is analyzed with various methods. Particularly, one can apply a variational approach as any Willmore minimizer satisfies \cref{eq:stat-dir-prob}. But also parabolic theory can be used by considering the associated gradient flow. With $b$ and $\eta$ as above, given an immersion $f_0\colon\Sigma\to\R^3$ with
\begin{equation}
    f_0(y)=b(y)\quad\text{and}\quad\nu_{f_0}(y)=\eta(y)\quad\text{ for all $y\in\partial\Sigma$},
\end{equation}
a family of immersions $f\colon[0,T)\times\Sigma\to\R^3$ satisfying
\begin{equation}\label{eq:dir-will-flow}
    \begin{cases}
        \partial_t f = - \bigl( \Delta H + |A^0|^2H \bigr)N&\text{in $[0,T)\times\Sigma$}\\
        f(0)=f_0&\text{in $\Sigma$}\\
        f(t)|_{\partial\Sigma}=b&\text{for all $t\in[0,T)$}\\
        \nu_{f(t)}=\eta&\text{for all $t\in[0,T)$}
    \end{cases}
\end{equation}
is called \emph{Willmore flow} with Dirichlet boundary conditions.

\subsection{State of the art}

The boundary problem \cref{eq:stat-dir-prob} is studied in \cite{schaetzle2010} for boundaries given by embedded, oriented, closed one-manifolds in $\R^n$. Following the variational approach to \cref{eq:stat-dir-prob}, a result in the class of varifolds is obtained in \cite[Theorem 4.1]{novagapozzetta2020}. The authors prescribe boundary data where $b$ parametrizes embedded, smooth curves. Then, so long as the associated infimum in the class of varifolds satisfying the given boundary data lies below $4\pi$, the minimum is attained. However, nothing is known in terms of further regularity results. Note that the notion of ``varifold'' is a rather weak concept and particularly does not encode a topology while the topology in \cref{eq:stat-dir-prob} is fixed by $\Sigma$. So the variational approach can fail in the class of immersions due to changes in the topology upon passing to (weak) limits while there are no issues in the realm of varifolds.

More concrete results have only been obtained for surfaces of revolution of cylindrical type. Write $C=[0,1]\times\S^1$ for the cylinder and consider an immersion $u\colon[0,1]\to\H^2$ where $\H^2=\R\times(0,\infty)$ denotes the hyperbolic plane. We denote by $f_u\colon C\to\R^3$ the associated surface of revolution given by
\begin{equation}
    f_u(x_1,x_2) = (u^{(1)}(x_1),u^{(2)}(x_1)\cos(x_2),u^{(2)}(x_1)\sin(x_2))^t.
\end{equation}
Uniqueness of \cref{eq:stat-dir-prob} is a delicate question and counterexamples in the class of cylindrical Willmore surfaces of revolution are given in \cite{eichmann2016}. Existence results for minimizers of the Willmore functional with certain Dirichlet boundary data are obtained in \cite{dallacquadeckelnickgrunau2008,dallacquafroehlichgrunauschieweck2011} in the class of surfaces of revolution arising from graphs, i.e. $u(x)=(x,h(x))^t$ for some positive function $h$ on $[0,1]$. This is achieved by cleverly choosing and analyzing minimizing sequences.

Moving from graphs to open curves $u\colon[0,1]\to\H^2$, in \cite[Theorem 1.1]{eichmanngrunau2019}, Eichmann-Grunau prove the existence of minimizers of $\W$ among cylindrical surfaces of revolution with Dirichlet boundary data given by two parallel circles of not necessarily equal radius and ``horizontal'' outer co-normal field if the corresponding infimum lies below $4\pi$. In this article, the result by Eichmann-Grunau is generalized to all possible Dirichlet data for cylindrical surfaces of revolution under an improved energy threshold.

Approaching \cref{eq:stat-dir-prob} with parabolic theory using the Willmore flow, not much is known for the Dirichlet problem. In \cite{kuwertschaetzle2002,kuwertschaetzle2001,kuwertschaetzle2004}, Kuwert-Schätzle famously show global existence and convergence to the round sphere for spherical immersions, i.e. $\Sigma=\S^2$, if the energy threshold $\W(f_0)<8\pi$ is satisfied. As these results rely on the classification of Willmore spheres by Bryant, cf. \cite{bryant1984}, obtaining similar results for other topologies also requires some understanding of the critical points, i.e. the stationary solutions. In \cite{dallacquamullerschatzlespener2020}, the authors use the blow-up construction of Kuwert-Schätzle and the classification of hyperbolic elastica due to \cite{langersinger1984} to obtain analogous results concerning the global existence and convergence of the Willmore flow for tori of revolution. 

Finally, the first study of the Willmore flow with Dirichlet boundary conditions is undertaken in \cite{schlierf2024} --- without using blow-up arguments but solely relying on one-dimensional energy estimates obtained by parabolic interpolation techniques. For initial data in the class of surfaces of revolution, \cite{schlierf2024} yields global existence and convergence of the Willmore flow if the initial datum satisfies an energy constraint which depends on the boundary conditions. This energy threshold is shown to be sharp for some critical boundary data.

\subsection{Main results}

Since we are working also in the class of cylindrical surfaces of revolution in this article, the Dirichlet boundary conditions can be described as follows. Let $p_0,p_1\in\H^2$, $\tau_0,\tau_1\in\S^1$ and write $p=(p_0,p_1)$ and $\tau=(\tau_0,\tau_1)$ for short. Define a first energy threshold by
\begin{equation}
    T(\tau)=4\pi - 2\pi \tau_y^{(2)}\Big|_{y=0}^1
\end{equation} 
where $\tau_y=(\tau_y^{(1)},\tau_y^{(2)})^t$. Moreover, writing
\begin{align}
    b_p(y,x_2)&=(p_y^{(1)},p_y^{(2)}\cos(x_2),p_y^{(2)}\sin(x_2))^t\quad\text{for $(y,x_2)\in\partial C=\{0,1\}\times\S^1$ and } \\\eta_\tau(y,x_2)&=(-1)^{1+y}\cdot(\tau_y^{(1)},\tau_y^{(2)}\cos(x_2),\tau_y^{(2)}\sin(x_2))^t\quad\text{ for $(y,x_2)\in\partial C$},\label{eq:def-bp-etatau}
\end{align}
define
\begin{align}
    C_{\mathrm{LY}}(p,\tau) &\vcentcolon= 8\pi - 2\sup_{z\in\R^3} \int_{\partial C} \frac{\langle\eta_{\tau}(x),b_p(x)-z\rangle}{|b_p(x)-z|^2}\dd\mu_{b_p^*\langle\cdot,\cdot\rangle}(x)\quad\text{and}\\
    C_{\mathrm{LY}}^{\mathrm{rot}}(p,\tau) &\vcentcolon= 8\pi - 2\sup_{z\in\R\times\{0\}\times\{0\}} \int_{\partial C} \frac{\langle\eta_{\tau}(x),b_p(x)-z\rangle}{|b_p(x)-z|^2}\dd\mu_{b_p^*\langle\cdot,\cdot\rangle}(x)\label{eq:def-cly-p-tau}
\end{align}
where $\mu_{b_p^*\langle\cdot,\cdot\rangle}$ is the measure on $\partial C$ induced by the pull-back of the Euclidean metric by $b_p$.  We argue later on that these thresholds are well-defined and actually satisfy the non-trivial relation $C_{\mathrm{LY}}(p,\tau)\geq T(\tau)$. Note that $C_{\mathrm{LY}}^{\mathrm{rot}}(p,\tau)\geq C_{\mathrm{LY}}(p,\tau)$ is trivial, using their definitions in \cref{eq:def-cly-p-tau}. Further, in \cref{app:est-cly-rot}, we deduce the explicit formula
\begin{equation}\label{eq:cly-rot-explicit}
    C_{\mathrm{LY}}^{\mathrm{rot}}(p,\tau) = 4\pi\Bigl[2-\sup_{h\in\R}\Bigl( p_y^{(2)}\frac{\tau_y^{(1)}(p_y^{(1)}-h)+\tau_y^{(2)}p_y^{(2)}}{(p_y^{(1)}-h)^2+(p_y^{(2)})^2}\Big|_{y=0}^1 \Bigr) \Bigr].
\end{equation}

\begin{remark}[Interpretation of the energy thresholds]
    Note that $T(\tau)$ is precisely the energy threshold below which the Willmore flow is understood in \cite{schlierf2024}. Moreover, if $f\colon C\to\R^3$ is an immersion with $f|_{\partial C}=b_p$ and $\nu_f=\eta_{\tau}$, then $\W(f)<C_{\mathrm{LY}}(p,\tau)$ yields that $f$ is an embedding if $b_p$ is an embedding, cf. \cref{prop:Li-Yau} --- this is an application of what we refer to as \emph{Simon's Li-Yau inequality} in the title, referencing Simon's monotonicity formula. Also cf. \cref{app:est-cly}. Moreover, if $\W(f)<C_{\mathrm{LY}}^{\mathrm{rot}}(p,\tau)$, then \cref{prop:Li-Yau} still yields that $f$ has no self-intersections on the rotation axis, i.e. on the line $\R\times\{0\}\times\{0\}$.
\end{remark}

In order to state the main results note that, if $u\colon[0,1]\to\H^2$ is an immersion, then
\begin{equation}\label{eq:bdry-cdts}
    u(y)=p_y\quad\text{and}\quad\frac{\partial_xu(y)}{|\partial_xu(y)|}=\tau_y\quad\text{for $y=0,1$}
\end{equation}
if and only if the associated surface of revolution $f_u$ satisfies
\begin{equation}
    f_u|_{\partial C} = b_p\quad\text{and}\quad \nu_{f_u} = \eta_{\tau}.
\end{equation}
The first result concerns the variational approach to \cref{eq:stat-dir-prob}.

\begin{theorem}\label{thm:main-calvar}
    Let $p_0,p_1\in\H^2$ and $\tau_y\in \S^1$ for $y=0,1$ and consider
    \begin{equation}\label{eq:min-prob}
        M_{p,\tau} \vcentcolon= \inf\{\W(f_u)\mid u\in W^{2,2}([0,1],\H^2)\text{ is an immersion satisfying \cref{eq:bdry-cdts}}\}.
    \end{equation}    
    For $M_{p,\tau}<C_{\mathrm{LY}}^{\mathrm{rot}}(p,\tau)$, the infimum in \cref{eq:min-prob} is attained by a smooth cylindrical Willmore-surface of revolution.
\end{theorem}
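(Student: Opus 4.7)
The strategy is the direct method of the calculus of variations, with the threshold $C_{\mathrm{LY}}^{\mathrm{rot}}(p,\tau)$ entering exactly to rule out the sole degeneration that the classical method cannot handle: the profile curve touching the rotation axis in the limit. Let $(u_n)_{n\in\N}$ be a minimizing sequence for \cref{eq:min-prob}; after reparametrization we may assume $|\partial_x u_n|$ is constant in $x$ for each $n$. The classical representation of the Willmore energy of a surface of revolution as a weighted elastica functional of its profile curve, combined with Simon's monotonicity formula applied to the family $(f_{u_n})$ (yielding uniform area and diameter bounds on $f_{u_n}(C)$, hence on $\|u_n\|_\infty$ and on the Euclidean length of $u_n$), produces a uniform $W^{2,2}([0,1],\R^2)$ bound on $u_n$. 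Thus along a subsequence $u_n \rightharpoonup u$ weakly in $W^{2,2}([0,1],\R^2)$ and strongly in $C^1$, with $u$ taking values in the closed upper half-plane and satisfying \cref{eq:bdry-cdts}; lower semicontinuity of $\W$, interpreted at the varifold level (since $u$ may a priori touch the axis), gives $\W(f_u) \leq M_{p,\tau}$.

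The decisive and most delicate step, which is the main obstacle, is to rule out that $u^{(2)}$ vanishes anywhere on $[0,1]$. This is precisely where \cref{prop:Li-Yau} is applied: if $u^{(2)}(x_0) = 0$ for some $x_0 \in [0,1]$, then the varifold associated with $f_u$ has a self-intersection on the rotation axis, and the modified Li-Yau inequality yields $\W(f_u) \geq C_{\mathrm{LY}}^{\mathrm{rot}}(p,\tau)$, contradicting $M_{p,\tau} < C_{\mathrm{LY}}^{\mathrm{rot}}(p,\tau)$. Crucial here is the use of Simon's monotonicity formula at the putative axis-contact point together with the boundary contribution encoded in \cref{eq:def-cly-p-tau}, and this is exactly where the geometric-measure-theoretic tools enter. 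Verifying the varifold lower semicontinuity carefully, and making the Li-Yau estimate applicable to $f_u$ under the weak convergence $u_n \rightharpoonup u$, are the technically most demanding parts of the argument.

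Once $u^{(2)} > 0$ is secured on $[0,1]$, $u$ is a genuine $W^{2,2}([0,1],\H^2)$ immersion satisfying \cref{eq:bdry-cdts} with $\W(f_u) = M_{p,\tau}$. As a minimizer, $u$ is a weak solution of the Euler-Lagrange equation of $u \mapsto \W(f_u)$ on $[0,1]$, a fourth-order quasilinear ODE with smooth coefficients as long as $u^{(2)} > 0$; standard ODE bootstrapping upgrades $u$ to $C^\infty([0,1],\H^2)$, making $f_u$ the desired smooth cylindrical Willmore surface of revolution attaining $M_{p,\tau}$.
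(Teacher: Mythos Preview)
Your high-level strategy matches the paper's: direct method, with the Li--Yau inequality on the rotation axis (\cref{prop:Li-Yau}) used to exclude the one possible degeneration. But two steps are not correct as written. The claimed uniform $W^{2,2}([0,1],\R^2)$ bound on the $u_n$ does not follow from bounded energy and diameter: from \cref{eq:elen-w22} the weight multiplying $|\partial_x^2 u_n|^2$ in the elastic energy is proportional to $u_n^{(2)}$ and degenerates near the axis, so without first bounding $u_n^{(2)}$ away from $0$ there is no global $W^{2,2}$ control (compare \cref{lem:compactness}, which only gives $W^{2,2}$ convergence on compact subintervals away from the zero set of the limit). The paper therefore argues the other way round: it first bounds the \emph{hyperbolic} length $\Ll_{\H^2}(u_n)$ uniformly (\cref{prop:prop2-bd-hyp-len}), which together with the boundary data in $\H^2$ forces $u_n^{(2)}$ uniformly away from zero, and only then invokes the $W^{2,2}$ compactness of Eichmann--Grunau. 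Relatedly, the diameter bound you quote is not an immediate consequence of Simon's monotonicity for surfaces with boundary; it requires the substantive profile-curve analysis of \cref{prop:bd-euc-len}.

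More seriously, the assertion that $u^{(2)}(x_0)=0$ yields ``a self-intersection on the rotation axis'' and hence $\W(f_u)\geq C_{\mathrm{LY}}^{\mathrm{rot}}(p,\tau)$ is the crux and is not automatic. A circle of a surface of revolution collapsing to a single axis point is not a self-intersection, and the density-$2$ lower bound at such a point genuinely needs proof. The paper supplies it inside the proof of \cref{prop:prop2-bd-hyp-len}: one first shows that the tangent of the limit profile curve becomes vertical at $x_0$ (\cref{prop:vert}), derives from this an explicit area lower bound $\mu_{V_n}(B_\rho(z^*))\geq 2(1-\varepsilon^2)\pi\rho^2$ for the approximating surfaces in small balls about the axis point $z^*$, and then applies \cref{prop:Li-Yau} to the varifold limit $V_\infty$ of the $V_{f_{u_n}}$ --- not to $f_u$, which is not an immersion at $x_0$. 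You correctly flag this as the hardest step but give no mechanism for carrying it out.
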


For boundary data $p,\tau$ as discussed in \cite[Theorem 1.1]{eichmanngrunau2019}, i.e. $p_0=(-1,\alpha_-)^t$, $p_1=(1,\alpha_+)^t$ and $\tau_y=(1,0)^t$ for some $\alpha_-,\alpha_+>0$, one finds $C_{\mathrm{LY}}^{\mathrm{rot}}(p,\tau)\geq T(\tau) = 4\pi$ --- thus recovering the result of Eichmann-Grunau from \cref{thm:main-calvar}. Moreover, using the explicit computations of $C_{\mathrm{LY}}^{\mathrm{rot}}$ in \cref{app:est-cly-rot} in this setting, one finds that \cref{thm:main-calvar} is actually an improvement.

Note that, independently of the work for this article, Eichmann-Schätzle solve the existence problem \cref{eq:min-prob} in the recent preprint \cite{eichmannschaetzle2024} for all Dirichlet boundary data apart from a degenerate case in this rotationally symmetric setting.

It is well-known that the Willmore flow starting in a surface of revolution parametrizes a surface of revolution at every time in its maximal existence interval and used for instance in \cite{dallacquamullerschatzlespener2020,schlierf2024}. Using \cite[Theorems 3.14 and 4.5]{schlierf2024}, if $f_u\colon[0,T)\times C\to\R^3$ is a maximal solution of \cref{eq:dir-will-flow} and $\sup_{t\in[0,T)}\Ll_{\H^2}(u(t))<\infty$, then $T=\infty$ and $f_{u(t)}$ converges after reparametrization to a Willmore surface of revolution for $t\to\infty$. Here, $\Ll_{\H^2}$ denotes the length of a curve in the hyperbolic plane. In this article, we give a new argument for the boundedness of the hyperbolic lengths by combining Simon's Li-Yau inequality with methods from geometric measure theory. After proving that \cref{eq:en-thresh} is sufficient for the hyperbolic lengths of the profile curves to remain uniformly bounded, we obtain the following

\begin{theorem}\label{thm:main}
    Let $u_0\colon[0,1]\to\H^2$ be an immersion satisfying \cref{eq:bdry-cdts}
    and
    \begin{equation}\label{eq:en-thresh}
        \W(f_{u_0}) \leq C_{\mathrm{LY}}^{\mathrm{rot}}(p,\tau).
    \end{equation}
    If $f_u\colon[0,T)\times C\to\R^3$ is a maximal solution of \cref{eq:dir-will-flow} with $f_0=f_{u_0}$, $b=b_p$ and $\eta=\eta_{\tau}$, then $T=\infty$ and the solution $f_u$ converges for $t\to\infty$ up to reparametrization smoothly to a cylindrical Willmore surface of revolution, i.e. a solution of \cref{eq:stat-dir-prob}.
\end{theorem}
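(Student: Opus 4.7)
The plan is as follows. Since the Willmore flow is the $L^2$-gradient flow of the Willmore functional, the map $t\mapsto\W(f_{u(t)})$ is non-increasing; hence $\W(f_{u(t)})\leq\W(f_{u_0})\leq C_{\mathrm{LY}}^{\mathrm{rot}}(p,\tau)$ throughout $[0,T)$. The borderline case $\W(f_{u_0})=C_{\mathrm{LY}}^{\mathrm{rot}}(p,\tau)$ is handled separately: either $f_{u_0}$ is already stationary and there is nothing to prove, or the strict energy drop at some $t_0>0$ reduces us to the strict inequality $\W(f_{u(t)})\leq\W(f_{u(t_0)})<C_{\mathrm{LY}}^{\mathrm{rot}}(p,\tau)$ on $[t_0,T)$. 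By \cite[Theorems 3.14 and 4.5]{schlierf2024}, it is enough to prove
\begin{equation*}
    \sup_{t\in[0,T)}\Ll_{\H^2}(u(t))<\infty.
\end{equation*}

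Assume for contradiction that $\Ll_{\H^2}(u(t_k))\to\infty$ for some sequence $t_k\in[0,T)$. Since the profile curves are pinned at the fixed interior points $p_0,p_1\in\H^2$ (where $p_y^{(2)}>0$), the hyperbolic length $\int_0^1|\partial_xu|/u^{(2)}\,dx$ can diverge only if the curves pinch towards the rotation axis, i.e.\ there exist $x_k\in(0,1)$ with $u^{(2)}(t_k,x_k)\to0$; otherwise $u^{(2)}$ would be uniformly bounded below, reducing the hyperbolic length to a comparable Euclidean length that is in turn controlled by the surface area via a Simon monotonicity argument using the Willmore energy bound.

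I now invoke geometric measure theory. Regarding $f_{u(t_k)}$ as rotationally symmetric integral $2$-varifolds $V_k$ in $\R^3$ with fixed Dirichlet boundary data $b_p$ on $\partial C$, the uniform bounds on mean curvature (in $L^2$) and on mass (Simon's monotonicity formula combined with the fixed boundary data and the Willmore energy bound) yield, after passing to a subsequence, an integral $2$-varifold limit $V_\infty$ inheriting the rotational symmetry and the Dirichlet boundary data. Lower semicontinuity of $\W$ under varifold convergence gives $\W(V_\infty)\leq\liminf_k\W(f_{u(t_k)})\leq\W(f_{u(t_0)})<C_{\mathrm{LY}}^{\mathrm{rot}}(p,\tau)$. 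Meanwhile, the pinching $u^{(2)}(t_k,x_k)\to0$ places a point $\xi^*=(x^*,0,0)$ on the rotation axis (with $x^*\in(0,1)$ a subsequential limit of $x_k$) in the support of $V_\infty$ with $2$-density $\Theta^2(\mu_{V_\infty},\xi^*)\geq2$: by rotational symmetry, the two branches of the profile curve approaching and leaving the axis on either side of $x_k$ superpose as two sheets meeting at $\xi^*$ in the limit. Finally, \cref{prop:Li-Yau} (Simon's Li-Yau inequality for varifolds with Dirichlet boundary correction) applied to $V_\infty$ at $\xi^*$ yields, by the very definition of $C_{\mathrm{LY}}^{\mathrm{rot}}(p,\tau)$ in \cref{eq:def-cly-p-tau}, that density at least $2$ on the axis forces $\W(V_\infty)\geq C_{\mathrm{LY}}^{\mathrm{rot}}(p,\tau)$, contradicting the strict upper bound above.

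The main obstacle is the geometric-measure-theoretic step: setting up the varifold compactness in the boundary setting with Dirichlet data retained strongly enough to preserve the boundary contribution in $C_{\mathrm{LY}}^{\mathrm{rot}}(p,\tau)$, extending \cref{prop:Li-Yau} from smooth immersions to the varifold limit, and rigorously verifying the multiplicity lower bound at the collapse point on the axis. A secondary technical point is the exclusion of Euclidean-length blow-up away from the axis via area control, which is where Simon's monotonicity formula together with the fixed boundary data enters again.
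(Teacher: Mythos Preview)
Your overall strategy matches the paper's: reduce to a uniform bound on $\Ll_{\H^2}(u(t))$ via \cite[Theorems~3.14 and~4.5]{schlierf2024}, handle the borderline $\W(f_{u_0})=C_{\mathrm{LY}}^{\mathrm{rot}}(p,\tau)$ by the strict energy drop of the gradient flow, then argue by contradiction that unbounded hyperbolic length forces a pinch to the axis, pass to a varifold limit, and invoke \cref{prop:Li-Yau} at the pinch point. This is precisely the architecture of the paper's proof, which packages the contradiction step into \cref{prop:prop2-bd-hyp-len}.

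There is one point where your sketch diverges from the paper and where your proposed route would not close as stated. You write that, absent pinching, the Euclidean length is ``controlled by the surface area via a Simon monotonicity argument''. In the boundary setting, this generic monotonicity argument (as in \cite{novagapozzetta2020}) yields a diameter/mass bound only under the threshold $4\pi$, whereas $C_{\mathrm{LY}}^{\mathrm{rot}}(p,\tau)$ can be anything up to $8\pi$. The paper therefore does \emph{not} use monotonicity here; instead it proves a rotationally-specific diameter bound (\cref{prop:bd-euc-len}) valid under $\W<8\pi$, by reparametrizing from each boundary end, analysing the limiting half-infinite profile curves via \cref{prop:vert}, \cref{lem:conv-cl-2}, \cref{lem:en-est-dd} and \cref{prop:eich-grun}, and summing the resulting elastic-energy contributions to force $\W\geq 8\pi$. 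This step is genuinely needed and is the ``secondary technical point'' you flag; a Simon-monotonicity shortcut does not suffice.

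A smaller remark: your justification that the limit varifold has density $\geq 2$ at $\xi^*$ (``two branches superpose by rotational symmetry'') is the right picture but not yet a proof. The paper makes this quantitative by first using \cref{prop:vert} to show the profile curve becomes \emph{vertical} at the pinch, then computing explicitly that the surface area in $B_\rho(\xi^*)$ is at least $2(1-\varepsilon^2)\pi\rho^2$ for small $\rho$, and finally passing this through varifold convergence. Without the verticality, two sheets of a surface of revolution meeting the axis need not contribute density $2$ (think of a single smooth disk through the axis).
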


In \cref{prop:battle-of-thresholds}, we prove that $C_{\mathrm{LY}}(p,\tau)\geq T(\tau)$. It is clear that one can have a strict inequality: For instance, if $p_0=p_1$ and $\tau_0=\tau_1$, then $C_{\mathrm{LY}}^{\mathrm{rot}}(p,\tau)=C_{\mathrm{LY}}(p,\tau)=8\pi$ while $T(\tau)=4\pi$. Therefore, \cref{thm:main} strictly improves \cite[Theorem 1.1]{schlierf2024}.

Again, independently of the work for this article, adapting their arguments from \cite{eichmannschaetzle2024}, in the recent preprint \cite{eichmann2024}, Eichmann also gives an improved energy threshold for the initial datum in \cref{eq:en-thresh}. While the argument in a way also involves excluding certain self-intersections on the rotation axis, Eichmann does not rely on Simon's Li-Yau inequality --- which as you may note is not specific to the rotational setting but could also be applied more generally. Instead, Eichmann makes use of the classification of rotationally symmetric critical points of the Willmore functional in \cite{langersinger1984} to construct a new energy bound which is specific to the rotationally symmetric setting, using an explicit geometric construction involving cut-outs of spheres and inverted catenoids modulo Möbius transformations. Notice that this corresponds to the findings on ``singular limits'' of the hyperbolic elastic flow in \cite{schlierf2023} which turn out to be exactly of this type. Both flows, \cref{eq:dir-will-flow} and the hyperbolic elastic flow are actually closely related, cf. \cite[Theorem~4.1]{dallacquaspener2018}. In \cref{app:est-cly-rot}, using Simon's monotonicity formula, we prove that both energy thresholds, i.e. \cref{eq:en-thresh} and the one in \cite[Theorem 1.1]{eichmann2024} are equivalent --- a non-trivial observation that provides a nice geometric interpretation for $C_{\mathrm{LY}}^{\mathrm{rot}}$.

\subsection{Strategy of proof and outline}

\cref{sec:not} contains a brief summary of geometric preliminaries and notation. Especially, we review the fundamentals of varifolds and Simon's Li-Yau inequality.

As aforementioned, the strategy of proof of \cref{thm:main-calvar,thm:main} is based on showing boundedness of the hyperbolic length of profile curves. As a first step, we establish that the energy bound $8\pi$ is sufficient for the diameters to be bounded in \cref{prop:bd-euc-len} --- this uses the preliminary observations of \cref{sec:prelim}. 

Then, one argues that an explosion of the hyperbolic length can only happen if the profile curves asymptotically touch the rotation axis. In this case, passing to a weak limit, one obtains a limiting varifold with the same Dirichlet boundary as the sequence of surfaces of revolution but with a point on the rotation axis which has density 2. Now the energy threshold and Simon's Li-Yau inequality yield a contradiction and thus boundedness of the hyperbolic lengths in the first place. This is contained in \cref{prop:prop2-bd-hyp-len}. The remainder of \cref{sec:pf-main} contains the proofs of \cref{thm:main-calvar,thm:main}.

Finally, in \cref{app:est-cly}, the relation $C_{\mathrm{LY}}(p,\tau)\geq T(\tau)$ is shown using Simon's monotonicity formula and some geometric constructions involving cut-outs of spheres and in \cref{app:est-cly-rot}, we show the explicit formula \cref{eq:cly-rot-explicit} for $C_{\mathrm{LY}}^{\mathrm{rot}}(p,\tau)$.


\section{Geometric preliminaries and notation}\label{sec:not}

In this article, we consider surfaces of revolution of cylindrical type which are immersed in $\R^3$. Write $C=[0,1]\times\S^1$ for the cylinder and consider an immersion $u\colon[0,1]\to\H^2$ where $\H^2=\R\times(0,\infty)$ denotes the hyperbolic plane. We denote by $f_u\colon C\to\R^3$ the associated surface of revolution given by
\begin{equation}
    f_u(x_1,x_2) = (u^{(1)}(x_1),u^{(2)}(x_1)\cos(x_2),u^{(2)}(x_1)\sin(x_2))^t.
\end{equation}
Fix the orientation $\{\frac{\partial}{\partial x_1},\frac{\partial}{\partial x_2}\}$ on the cylinder $C$ and equip it with the pull-back metric $g_{f_u}=(f_u)^*\langle\cdot,\cdot\rangle$ of the Euclidean scalar product on $\R^3$, that is, $g_{ij}=\langle\partial_if_u,\partial_jf_u\rangle$. Then the outer normal vector $N$ of $f_u$ induced by the orientation of $C$ is given by
\begin{equation}
    N=\frac{\partial_{x_1}f_u\times\partial_{x_2}f_u}{|\partial_{x_1}f_u\times\partial_{x_2}f_u|}
\end{equation} 
and the (scalar) second fundamental form by $A_{ij}=\langle \partial^2_{ij}f,N\rangle$. The mean curvature can be computed as $H=\frac12g^{ij}A_{ij}$ and the tensor $A^0$ with components $A^0_{ij} = A_{ij} - Hg_{ij}$ is called trace-free second fundamental form. The \emph{Willmore energy} is
\begin{equation}
    \W(f_u) = \int_{C} H^2\dd\mu_{f_u}
\end{equation}
where $\mu_{f_u}$ is the measure on $C$ induced by $g_{f_u}$. For a family of immersions $f\colon[0,T)\times C\to\R^3$, write $\nabla\W(f)=(\Delta H + |A^0|^2H)N$ where $\Delta$ denotes the Laplace-Beltrami operator on $C$ with respect to $g_{f}$.

These cylindrical surfaces of revolution are completely determined by their profile curves $u$ in the hyperbolic plane $\H^2$. Using the half-plane model, $\H^2$ is equipped with the metric $g_p=1/(p^{(2)})^2\langle\cdot,\cdot\rangle$ where $p=(p^{(1)},p^{(2)})^t\in\H^2$ and $\langle\cdot,\cdot\rangle$ is the Euclidean scalar product in $\R^2$. Denote covariant differentiation on $\H^2$ by $\nabla$. For an immersion $u\colon[0,1]\to\H^2$, write $\dd s=|\partial_xu|_g\dd x$ for the arc-length element, $\partial_s=1/|\partial_xu|_g\partial_x$ and $\curv=\nabla_{\partial_su}\partial_su$ for its curvature. The \emph{elastic energy} of $u$ is defined by
\begin{equation}
    \E(u)=\int_0^1|\curv|_g^2\dd s.
\end{equation} 
In the following, we consider the following Dirichlet boundary conditions for cylindrical surfaces of revolution. Let $p_1,p_1\in\H^2$ and $\tau_0,\tau_1\in \S^1\subseteq \R^2$. If $u\colon[0,T)\times[0,1]\to\H^2$ is a family of immersions satisfying \cref{eq:bdry-cdts}, one finds as in \cite[pp. 9 -- 12]{mingione2012}
\begin{equation}\label{eq:l2-grad-will}
    \frac{\dd}{\dd t}\W(f_u) = \int_{C} \langle \nabla\W(f_u),\partial_t f\rangle\dd\mu_{f_u}.
\end{equation}
Furthermore, writing $\nabla\E(u)=2(\nabla_s^{\bot})^2\curv+|\curv|_g^2\curv-2\curv$ where $v^{\bot}=v-\langle v,\partial_su(x)\rangle_g\partial_su(x)$ for $v\in T_{u(x)}\H^2$, one finds for any family of smooth immersions $u\colon[0,T)\times[0,1]\to\H^2$ satisfying \cref{eq:bdry-cdts}, as in \cite[Remark 2.5]{dallacquaspener2017},
\begin{equation}
    \frac{\dd}{\dd t}\E(u) = \int_0^1\langle \nabla\E(u),\partial_t u\rangle_g\dd s.
\end{equation}
Consider an immersion $u_0\colon[0,1]\to\H^2$ satisfying \cref{eq:bdry-cdts}.
If suitable further compatibility conditions are satisfied, there exists a maximal solution $u\colon[0,T)\times[0,1]\to\H^2$ to 
\begin{equation}\label{eq:wf-eq}
\begin{cases}
	\partial_t u = -\frac{1}{4(u^{(2)})^4}\nabla\E(u)&\text{in $[0,T)\times[0,1]$}\\
    u(0)=u_0&\text{in $[0,1]$}\\
    u(t,y)=p_y&\text{for $t\in[0,T)$ and $y\in\{0,1\}$}\\
    \frac{\partial_xu(t,y)}{|\partial_xu(t,y)|}=\tau_y&\text{for $t\in[0,T)$ and $y\in\{0,1\}$},
\end{cases}
\end{equation}
cf. \cite[Appendix C]{schlierf2024} for some references for short-time existence results and a proof of uniqueness. Then computations in \cite[Theorem 4.1]{dallacquaspener2018} show that the associated surfaces of revolution $f_u$ in $\R^3$ satisfy the classical Willmore flow evolution \cref{eq:dir-will-flow} with $\eta=\eta_{\tau}$ and $b=b_p$. Therefore, a solution $u$ to \cref{eq:wf-eq} is also called \emph{Willmore flow} with Dirichlet boundary conditions.

The following fundamental relation goes back to \cite{bryantgriffiths1986}. In its cited form, it can be found in \cite[(2.11)]{eichmanngrunau2019}. Writing $|\cdot|$ for the Euclidean norm in $\R^2$, for an immersion $u\colon I\to\H^2$ on a compact interval $I\subseteq\R$, one has
\begin{equation}\label{eq:br-gr}
    \frac{2}{\pi}\W(f_u) = \E(u) - 4\frac{\partial_xu^{(2)}}{|\partial_xu|}\Big|_{\partial I}.
\end{equation}
\begin{remark}
    Consider a compact interval $I\subseteq\R$ and an immersion $u\in W^{2,2}(I)$ into $\H^2$. By \cite{meyersserrin1964}, there exists a sequence $(u_n)_{n\in\N}\subseteq C^{\infty}(I^{\circ})\cap W^{2,2}(I)$ with $u_n\to u$ in $W^{2,2}(I)$ and thus also in $C^1(I)$. Using \cref{lem:elen-cont}, $\E(u_{n})\to \E(u)$ and, applying \cref{eq:br-gr} to each $u_{n}$, we obtain
    \begin{equation}\label{eq:app-br-gr}
        \E(u)\geq 4\frac{\partial_xu^{(2)}}{|\partial_xu|}\Big|_{\partial I}.
    \end{equation}
\end{remark}

\subsection{Some properties of varifolds and immersed manifolds}

Detailed references and computations for the following definitions and results can be found in \cite[Section 1.1 and Example 2.4]{scharrer2022}. First, we review the definition and basic properties of varifolds. For integers $1\leq m<n$, denote by $G(n,m)$ the set of all $m$-dimensional subspaces of $\R^n$ which has a natural structure as a smooth Euclidean submanifold. An $m$-dimensional varifold $V$ in $\R^n$ is then a Radon-measure on $G_m(\R^n)\vcentcolon= \R^n\times G(n,m)$. Its associated \emph{weight measure} $\mu_V$ on $\R^n$ is the push-forward of $V$ under the projection $G_m(\R^n)\to\R^n$, i.e.
\begin{equation}
    \mu_V(A)=V(\{(p,T)\in G_m(\R^n):p\in A\})\quad\text{for all $A\subseteq \R^n$}.
\end{equation}
If $X\in C_c^1(\R^n,\R^n)$ and $T\in G(n,m)$, one writes $\diver_T X=\sum_{i=1}^m \langle D_{e_i}X,e_i\rangle_{\R^n}$ for any orthonormal basis $e_1,\dots,e_m$ of $T$. The first variation of $V$ is then defined as the functional $\delta V\colon C_c^1(\R^n)\to \R$ with 
\begin{equation}
    \delta V(X) = \int_{G_m(\R^n)} \diver_T(X) (x)\dd V(x,T)
\end{equation}
and $\|\delta V\|$ with $\|\delta V\|(U)=\sup\{\delta V(X):X\in C_c^1(\R^n),\ \mathrm{supp}(X)\subseteq U,\ |X|\leq 1\}$ for $U\subseteq\R^n$ open is called its \emph{total variation}. Finally, one says that $\vec{H}$ is the generalized mean curvature of $V$ if it is $\mu_V$-measurable, $\|\delta V\|$ is a Radon measure over $\R^n$ and if there exists a $\|\delta V\|$-measurable map $\eta_V$ with $|\eta_V(x)|\leq 1$ for $\|\delta V\|$-a.e. $x$ and 
\begin{equation}
    \delta V(X) = - m \int_{\R^n} \langle \vec{H},X\rangle_{\R^n}\dd\mu_V + \int_{\R^n} \langle X,\eta_V\rangle_{\R^n} \dd\sigma_V
\end{equation}
where $\sigma_V=\|\delta V\| - \|\delta V\|_{\mu_V}$ where $\|\delta V\|_{\mu_V}$ is the absolutely continuous part of $\|\delta V\|$ with respect to $\mu_V$. One defines the Willmore energy of $V$ by 
\begin{equation}
    \W(V) = \int_{\R^n} |\vec{H}|^2\dd\mu_V.
\end{equation}
Viewing varifolds as Radon measures provides a natural concept of weak varifold convergence. Namely, we say that a sequence of $m$-dimensional varifolds $(V_j)_{j\in\N}$ weakly converges to an $m$-dimensional varifold $V$ in $\R^n$ and write $V_j\rightharpoonup V$ if 
\begin{equation}
    \lim_{j\to\infty} \int_{G_m(\R^n)}\varphi(x,T)\dd V_j(x,T) = \int_{G_m(\R^n)} \varphi(x,T)\dd V(x,T)
\end{equation}
for all $\varphi\in C_c^0(G_m(\R^n))$. Note that both the total variation and the Willmore energy are lower semi-continuous with respect to weak varifold convergence.

There is a natural way to associate an $m$-dimensional varifold $V_f$ to an immersion $f\colon\Sigma\to\R^n$ of an $m$-dimensional Riemannian manifold $\Sigma$ into $\R^n$. Indeed, for any $\varphi\in C_c^0(G_m(\R^n))$, define 
\begin{equation}
    V_f(\varphi) = \int_{\R^n} \sum_{p\in f^{-1}(\{x\})} \varphi(x,df_p(T_p\Sigma))\dd\Haus^m(x)
\end{equation}
where $\Haus^m$ simply denotes the $m$-dimensional Hausdorff measure on $\R^n$. One then has that $\mu_{V_f}=f_{\#}\mu_f$ and $\theta^m(\mu_{V_f},x)=\Haus^0(f^{-1}(\{x\}))$ for all $x\in f(\Sigma\setminus\partial\Sigma)$ where $\theta^m(\mu,x)$ denotes the $m$-dimensional density of the measure $\mu$ at $x$, if it exists. Denoting by $\vec{H}_f$ the classical mean curvature field of $f$, i.e. $\frac{1}{m}$ times the trace of the vector-valued second fundamental form, define the $\mu_{V_f}$-measurable map $\vec{H}\colon\R^n\to\R^n$ by
\begin{equation}
    \vec{H}(x)=\begin{cases}
        \frac{1}{\theta^m(\mu_{V_f},x)} \sum_{p\in f^{-1}(\{x\})} \vec{H}_f(p)&\text{if $\theta^m(\mu_{V_f},x)> 0$}\\
        0&\text{else}.
    \end{cases}
\end{equation} 
By \cite[Proposition 2.17]{lee2018}, if $\partial\Sigma\neq\emptyset$, there exists a unique outward-pointing unit co-normal field $\nu$ along $\partial\Sigma$. If $f$ is isometric, writing $\nu_f=df(\nu)\colon\partial\Sigma\to\S^{n-1}\subseteq\R^n$ for the co-normal of $f$ on the boundary, $V_f$ has generalized mean curvature $\vec{H}$ and 
\begin{equation}
    \delta V_f(X) = - m\int_{\R^n} \langle X,\vec{H}\rangle_{\R^n} \dd\mu_{V_f} + \int_{\partial\Sigma} \langle X\circ f, \nu_f \rangle_{\R^n}\dd\sigma_f
\end{equation}
where $\sigma_f$ is the induced measure on $\partial\Sigma$ by $(f|_{\partial\Sigma})^*\langle\cdot,\cdot\rangle_{\R^n}$.

We also define the notion of integral varifolds. An $m$-dimensional varifold $V$ in $\R^n$ is said to be \emph{integral} if there are countably many $C^1$-regular submanifolds $M_i$ and Borel subsets $B_i\subseteq M_i$ such that $V=\sum_{i=1}^{\infty} v_m(B_i)$ where 
\begin{equation}
    v_m(B_i)(\varphi) = \int_{B_i} \varphi(x,T_xM_i)\dd\Haus^m(x)
\end{equation}
for any $\varphi\in C_c^0(G_m(\R^n))$. One obtains that $\mu_V=\theta \Haus^m \measurerestr M$ for $\theta\in L^1_{\mathrm{loc}}(\Haus^m\measurerestr M)$ with values in $\N_0$ and a countably $m$-rectifiable set $M\subseteq\R^n$. Since immersions are locally embeddings, one obtains that any $V_f$ as defined above is integral.

An important property of such integral varifolds is the following compactness theorem due to Allard in \cite{allard1972}.

\begin{theorem}\label{thm:vari-cpt}
    Consider a sequence of $m$-dimensional integral varifolds $(V_j)_{j\in\N}$ in $\R^n$ with 
    \begin{equation}
        \sup_{j\in\N} \|\delta V_j\|(W) + \mu_{V_j}(W) < \infty\quad\text{for all $W\ssubset\R^n$}.
    \end{equation}
    Then there exists an $m$-dimensional integral varifold $V$ with $V_j\rightharpoonup V$, after passing to a subsequence.
\end{theorem}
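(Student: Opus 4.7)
The proof has two essentially separate components: extracting a weakly convergent subsequence, and upgrading the weak limit to an integral varifold.

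For the first component, I would invoke the standard Banach--Alaoglu / Riesz representation framework for Radon measures. The assumption gives $\sup_{j} \mu_{V_j}(W) < \infty$ for every $W\ssubset\R^n$. Since the natural projection $\pi\colon G_m(\R^n)\to \R^n$ is a proper continuous map and $V_j = \pi^{-1}(\mu_{V_j})$ in the sense that $V_j(\pi^{-1}(A)) = \mu_{V_j}(A)$, we get $\sup_j V_j(K) < \infty$ for any compact $K \subseteq G_m(\R^n)$. Therefore the sequence of Radon measures $(V_j)$ on $G_m(\R^n)$ is bounded on compacts, so by the standard compactness theorem for Radon measures (see e.g.\ Simon's lecture notes, Theorem~4.4), a subsequence converges weakly to some Radon measure $V$ on $G_m(\R^n)$, i.e.\ an $m$-dimensional varifold. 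By continuity of $\pi$ one then obtains $\mu_{V_{j_k}}\rightharpoonup \mu_V$ as Radon measures on $\R^n$.

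For the second component, one has to show that $V$ is integral. The main tool is Allard's rectifiability theorem: a varifold with locally bounded first variation whose weight measure has positive $m$-dimensional density at $\mu_V$-a.e.\ point is rectifiable. Lower semi-continuity of the total variation (which follows directly from the definition of $\|\delta V\|$ as a supremum over smooth test vector fields of compact support together with $V_{j_k}\rightharpoonup V$) implies $\|\delta V\|(W) \leq \liminf_k \|\delta V_{j_k}\|(W) < \infty$, so $V$ has locally bounded first variation. For the density bound, one uses the monotonicity-type lower bound on density that comes from bounded first variation, combined with the fact that each $V_{j_k}$ is integral so has integer densities $\theta^m(\mu_{V_{j_k}}, \cdot)\geq 1$ on $\mathrm{supp}(\mu_{V_{j_k}})$. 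Via the density estimate induced by monotonicity formulas (see Allard, \S5, or Simon's notes, Theorem~17.6) one obtains $\theta^m(\mu_V, x) \geq 1$ at $\mu_V$-a.e.\ point of the support. Rectifiability of $V$ then follows.

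To upgrade rectifiability to integrality is the technical heart of the argument. The approach is to show that $\theta^m(\mu_V, \cdot)$ is integer-valued $\mu_V$-almost everywhere. This is Allard's integral compactness theorem and uses the fact that if $V_{j_k}\rightharpoonup V$ and each $V_{j_k}$ has integer density, then in a ball around a typical point $x\in\mathrm{supp}(\mu_V)$ one can blow up and use monotonicity-type estimates, together with a tilt-excess argument, to transfer the integrality property to the limit. I expect this step to be the main obstacle in a self-contained write-up: it requires the full strength of Allard's regularity machinery to handle the interaction between the (possibly singular) part of the first variation and the density of the weight measure. In the present context, however, the theorem is a standard black box originating from \cite{allard1972}, so I would simply cite it rather than reproduce the details.
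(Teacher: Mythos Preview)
The paper does not prove this statement at all: it is simply stated as Allard's compactness theorem with a citation to \cite{allard1972}, so your final conclusion to treat it as a standard black box and cite Allard is precisely what the paper does. Your outline of the argument (Banach--Alaoglu for varifold compactness, lower semi-continuity of the first variation, Allard's rectifiability theorem, and the integrality step via blow-up and tilt-excess) is a correct sketch of the underlying proof, though it goes beyond what the paper itself provides.
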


\subsection{Simon's Li-Yau inequality}

In the setting of integral varifolds with finite Willmore energy, using Simon's monotonicity formula for the Willmore energy, one has the following Li-Yau inequality which is originally proved for closed surfaces in \cite{liyau1982}. Below we obtain a Li-Yau-type inequality for varifolds with not necessarily vanishing singular part of the total variation. The idea of employing a monotonicity formula to find new Li-Yau-type inequalities is also used in \cite{ruppscharrer2023} to obtain a Li-Yau inequality for the Helfrich functional for closed varifolds.

\begin{proposition}\label{prop:Li-Yau}
    Let $V$ be a $2$-dimensional integral varifold in $\R^3$ with generalized mean curvature $\vec{H}$ and suppose that $\W(V)<\infty$, $\mu_V(\R^3)<\infty$ and that $\mathrm{supp}(\sigma_V)$ is compact with $\sigma_V(\R^3)<\infty$. For all $z\in\R^3\setminus\mathrm{supp}(\sigma_V)$,
    \begin{equation}\label{eq:monotonicity}
        \theta^2(\mu_V,z)\pi \leq \frac{\W(V)}{4} + \frac{1}{2} \int_{\R^3}  \frac{\langle \eta_V(x),x-z \rangle}{|x-z|^2}\dd \sigma_V.
    \end{equation}
    Therefore, if $K\subseteq\R^3\setminus\mathrm{supp}(\sigma_V)$ is non-empty and 
    \begin{equation}\label{eq:Li-Yau}
        \W(V) < 8\pi - 2\sup_{z\in K} \int_{\R^3} \frac{\langle \eta_V(x),x-z \rangle}{|x-z|^2}\dd \sigma_V,
    \end{equation}
    then $\theta^2(\mu_V,z)<2$ for all $z\in K$.
\end{proposition}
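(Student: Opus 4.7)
My plan is to derive \cref{eq:monotonicity} from Simon's monotonicity formula adapted to varifolds with nontrivial boundary measure, and then to read off \cref{eq:Li-Yau} as an immediate corollary.

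The main technical step is to establish, for $0<\sigma<\rho$ and $z\in\R^3\setminus\mathrm{supp}(\sigma_V)$, the monotonicity inequality
\[
    \frac{\mu_V(B_\sigma(z))}{\sigma^2} \leq \frac{\mu_V(B_\rho(z))}{\rho^2} + \frac{1}{4}\int_{B_\rho(z)} |\vec{H}|^2 \dd\mu_V + \frac{1}{2}\int_{B_\rho(z)} \frac{\langle \eta_V(x), x-z\rangle}{|x-z|^2} \dd\sigma_V.
\]
To prove it, I would test the first variation identity $\delta V(X) = -2\int_{\R^3}\langle \vec{H}, X\rangle \dd\mu_V + \int_{\R^3}\langle X, \eta_V\rangle \dd\sigma_V$ against the truncated radial field $X_{\rho,z}(x) = \gamma(|x-z|/\rho)(x-z)$, where $\gamma\colon[0,\infty)\to[0,1]$ is a smooth cutoff with $\gamma\equiv 1$ on $[0,1/2]$ and $\gamma\equiv 0$ on $[1,\infty)$. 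The classical Simon/Allard calculation -- compute $\diver_T X_{\rho,z}$, differentiate in $\rho$, integrate back, and discard a nonnegative perpendicular-gradient term obtained by completing a square against $\vec{H}$ -- yields the displayed inequality. The only new contribution compared to the closed-surface case is the boundary integral, which is generated directly by the $\sigma_V$-part of $\delta V$ and survives the calculation unchanged.

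Once the monotonicity is in hand, I pass to the limits $\sigma\downarrow 0$ and $\rho\to\infty$. For $\sigma\downarrow 0$: because $z\notin\mathrm{supp}(\sigma_V)$, for $\sigma<\mathrm{dist}(z,\mathrm{supp}(\sigma_V))$ the boundary integral vanishes and the classical closed-surface monotonicity applies on small balls, so $\mu_V(B_\sigma(z))/\sigma^2$ converges to $\pi\,\theta^2(\mu_V,z)$, with the density finite because $V$ is integral. For $\rho\to\infty$: the hypothesis $\mu_V(\R^3)<\infty$ forces $\mu_V(B_\rho(z))/\rho^2\to 0$; monotone convergence sends $\frac{1}{4}\int_{B_\rho(z)}|\vec{H}|^2\dd\mu_V$ to $\W(V)/4$; and since $\mathrm{supp}(\sigma_V)$ is compact with positive distance from $z$, the boundary integrand is bounded by $\mathrm{dist}(z,\mathrm{supp}(\sigma_V))^{-1}$, so together with $\sigma_V(\R^3)<\infty$ dominated convergence extends the integral to all of $\R^3$. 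This yields \cref{eq:monotonicity}.

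Finally, \cref{eq:Li-Yau} is immediate: for every $z\in K$, the strict inequality assumed on $\W(V)$ together with \cref{eq:monotonicity} gives $\pi\,\theta^2(\mu_V,z)<(8\pi)/4=2\pi$, so $\theta^2(\mu_V,z)<2$. The principal obstacle throughout is the careful bookkeeping required to track the boundary term through Simon's monotonicity calculation and to identify precisely the coefficient $\frac{1}{2}$ in front of the limiting $\sigma_V$-integral; once the pre-limit inequality is established, both statements of the proposition follow by routine limit passage.
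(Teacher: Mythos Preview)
Your proposal is correct and follows essentially the same route as the paper: derive the monotonicity inequality $A_z(\sigma)\leq A_z(\rho)$ from Simon's formula with boundary, then send $\sigma\downarrow 0$ and $\rho\to\infty$ to obtain \cref{eq:monotonicity}, from which \cref{eq:Li-Yau} is immediate. The only difference is that the paper quotes the monotonicity identity \cref{eq:mon-form} directly from \cite[Appendix~B]{novagapozzetta2020} rather than rederiving it via the radial test field as you outline.
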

\begin{proof}
    This is a direct consequence of \cref{eq:mon-form} upon noting that $\lim_{t\searrow 0}A_z(t)=\theta^2(\mu_V,z)\pi$ and $\lim_{t\nearrow\infty}A_z(t)=\frac{\W(V)}{4} + \frac{1}{2} \int_{\R^3}  \frac{\langle \eta_V(x),x-z \rangle}{|x-z|^2}\dd \sigma_V$, cf. \cite[(B.8) and p. 555]{novagapozzetta2020}.
\end{proof}

Motivated by this Li-Yau inequality, we make the following definition.

\begin{definition}\label{def:def-cly}
    Let $\Sigma$ be a compact surface with boundary, $b\colon \partial\Sigma\to\R^3$ an immersion and $\eta\colon\partial\Sigma\to\S^2\subseteq\R^3$ with $\eta(x)\bot db(T_x\partial\Sigma)$ for all $x\in\partial\Sigma$ and $K\subseteq\R^3$. Then 
    \begin{equation}\label{eq:def-cly}
        C_{\mathrm{LY}}^K(b,\eta) \vcentcolon= 8\pi - 2\sup_{z\in K} \int_{\partial\Sigma} \frac{\langle\eta(x),b(x)-z\rangle}{|b(x)-z|^2}\dd\mu_{b^*\langle\cdot,\cdot\rangle}(x)
    \end{equation}
    where $\mu_{b^*\langle\cdot,\cdot\rangle}$ is the measure on $\partial\Sigma$ induced by the metric $b^*\langle\cdot,\cdot\rangle$.
\end{definition}
\begin{remark}\label{rem:p-tau-for-vari}
    If $u\colon[0,1]\to\H^2$ is an immersion with $p_y=u(y)$ and $\tau_y=\partial_xu(y)/|\partial_xu(y)|$ for $y=0,1$, then 
    \begin{equation}
        C_{\mathrm{LY}}^{\R\times\{0\}\times\{0\}}(f_u|_{\partial C},\nu_{f_u}) =C_{\mathrm{LY}}^{\R\times\{0\}\times\{0\}}(b_p,\eta_{\tau})= C_{\mathrm{LY}}^{\mathrm{rot}}(p,\tau),
    \end{equation}
    using \cref{eq:def-bp-etatau}. Particularly, if $V$ is a $2$-dimensional integral varifold in $\R^3$ with finite Willmore energy and area such that $\sigma_V=\sigma_{V_{f_u}}$ and $\eta_V=\eta_{V_{f_u}}$, then \cref{eq:Li-Yau} with $K=\R\times\{0\}\times\{0\}$ is equivalent to
    \begin{equation}
        \W(V) < C_{\mathrm{LY}}^{\mathrm{rot}}(p,\tau).
    \end{equation}
\end{remark}
\begin{remark}\label{rem:cly-vs-8pi}
    As $b\colon\partial\Sigma\to\R^3$ and $\eta\colon\partial\Sigma\to\S^{2}$ are smooth, one can argue that the integral in \cref{eq:def-cly} is finite for any $z\in K$, cf. \cite[Equation (4.2)]{novagapozzetta2020}. Since the integrand converges to $0$ for $|z|\to\infty$, if $K$ is unbounded, the supremum in \cref{eq:def-cly} is non-negative and therefore $C_{\mathrm{LY}}^K(b,\eta)\leq 8\pi$. Particularly, also $C_{\mathrm{LY}}^{\mathrm{rot}}(p,\tau)\leq 8\pi$.
    
    Returning to \cref{eq:def-cly-p-tau}, the estimate $C_{\mathrm{LY}}(p,\tau)\geq 0$ is less obvious but follows from \cref{prop:battle-of-thresholds}.
\end{remark}


\section{Fundamental estimates for open curves in $\H^2$}\label{sec:prelim}

\begin{proposition}\label{prop:eich-grun}
    Consider immersions $u_n\colon[0,1]\to\H^2$ and points $p_0,p_1\in\H^2$ such that $u_n(y)\in B_R(p_y)\subseteq\H^2$ for some $R>0$, for $y=0,1$. Then
    \begin{equation}\label{eq:prop-eich-grun}
        \limsup_{n\to\infty}\Ll_{\H^2}(u_n)=\infty \implies \limsup_{n\to\infty}\E(u_n)\geq 8.
    \end{equation}
    Particularly, 
    \begin{equation}\label{rem:eich-grun}
        \lim_{n\to\infty}\Ll_{\H^2}(u_n)=\infty \implies \liminf_{n\to\infty}\E(u_n)\geq 8.
    \end{equation} 
\end{proposition}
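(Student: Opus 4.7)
The second implication follows from the first by extracting a subsequence, so I focus on \eqref{eq:prop-eich-grun}. Argue by contradiction: suppose $\limsup_n\Ll_{\H^2}(u_n) = \infty$ but $\limsup_n\E(u_n) \le M$ for some $M < 8$. Passing to a subsequence we have $\Ll_{\H^2}(u_n) \to \infty$, $\E(u_n) \le M$, and (by compactness) $u_n(y) \to q_y \in \overline{B_R(p_y)}$ and $\tau_{n, y} \vcentcolon= \partial_xu_n(y)/|\partial_xu_n(y)| \to \tau_{\infty, y}$ for $y\in\{0,1\}$. With $T(\tau) = 4\pi - 2\pi\tau_y^{(2)}|_{y=0}^1$ as in the introduction, the Bryant--Griffiths identity \eqref{eq:br-gr} rearranges to
\[
    \W(f_{u_n}) \;=\; \tfrac{\pi}{2}\E(u_n) + T(\tau_n) - 4\pi \;\le\; T(\tau_n) - \tfrac{\pi(8-M)}{2};
\]
in particular $\sup_n\W(f_{u_n}) < 8\pi$.

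Next, I pass to a varifold limit. The boundary circles $f_{u_n}|_{\partial C}$ stay in a fixed compact subset of $\R^3$ disjoint from the rotation axis (because $u_n(y) \in B_R(p_y) \subseteq \H^2$), and combined with the uniform bound $\W(f_{u_n}) < 8\pi$, Simon's monotonicity formula yields uniform bounds on the areas $\mu_{V_n}(\R^3)$ and on $\diam(\mathrm{supp}\,\mu_{V_n})$ for $V_n \vcentcolon= V_{f_{u_n}}$. By \cref{thm:vari-cpt}, $V_n \rightharpoonup V_\infty$ along a further subsequence for some integral limit varifold with boundary data $(b_q, \eta_{\tau_\infty})$, and $\W(V_\infty) \le T(\tau_\infty) - \tfrac{\pi(8-M)}{2}$ by lower semicontinuity and continuity of $T$. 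The main geometric step --- and the chief obstacle --- is to show that $V_\infty$ admits a point of density at least $2$ on the rotation axis. Writing the area as $2\pi\int_0^1 u_n^{(2)} |\partial_xu_n|\,\dd x \ge 2\pi\inf_x u_n^{(2)}\cdot\Ll_\euc(u_n)$ and using $\Ll_{\H^2}(u_n) \le \Ll_\euc(u_n)/\inf_x u_n^{(2)}$, the bounded area forces $\inf_{x}u_n^{(2)}(x) \to 0$; picking $x_n \in [0,1]$ realizing this infimum and extracting $u_n^{(1)}(x_n) \to a_\infty$ from the diameter bound, the circle $f_{u_n}(\{x_n\}\times\S^1)$ collapses to the axis point $P \vcentcolon= (a_\infty, 0, 0)$. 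Since the endpoints $u_n(y) \to q_y$ stay a positive distance from the axis, the profile $u_n$ must both approach and retreat from $P$; a careful analysis of the rotationally symmetric weak varifold convergence then gives $\theta^2(\mu_{V_\infty}, P) \ge 2$.

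Finally, applying \cref{prop:Li-Yau} to $V_\infty$ at the axis point $z = P$ and explicitly computing the boundary integral as in the derivation of \eqref{eq:cly-rot-explicit} yields
\[
    \W(V_\infty) \;\ge\; 8\pi - 2\int_{\partial C}\frac{\langle\eta_{\tau_\infty}, b_q - P\rangle}{|b_q - P|^2}\,\dd\mu_{b_q^*\langle\cdot,\cdot\rangle} \;\ge\; C_{\mathrm{LY}}^{\mathrm{rot}}(q, \tau_\infty) \;\ge\; T(\tau_\infty),
\]
where the last inequality combines the trivial comparison $C_{\mathrm{LY}}^{\mathrm{rot}} \ge C_{\mathrm{LY}}$ with \cref{prop:battle-of-thresholds}. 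This contradicts the strict upper bound on $\W(V_\infty)$ established above.
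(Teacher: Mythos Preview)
Your argument is circular. The uniform diameter and area bound you extract ``from Simon's monotonicity formula'' is precisely the content of \cref{prop:bd-euc-len}, and the density-$2$ argument you sketch (``a careful analysis of the rotationally symmetric weak varifold convergence then gives $\theta^2(\mu_{V_\infty},P)\ge 2$'') is precisely the core of \cref{prop:prop2-bd-hyp-len}. Both of those propositions \emph{use} \cref{prop:eich-grun} in their proofs: \cref{eq:diam-1} in the proof of \cref{prop:bd-euc-len} invokes \cref{rem:eich-grun} directly, and \cref{prop:prop2-bd-hyp-len} in turn relies on \cref{prop:bd-euc-len}. So you are assuming what you set out to prove. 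Note also that a diameter bound for surfaces with boundary from monotonicity alone is only known under the threshold $\W<4\pi$ (Novaga--Pozzetta); lifting it to $8\pi$ in the rotational setting is exactly the point of \cref{prop:bd-euc-len} and is not a one-line consequence of the monotonicity formula.

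For comparison, the paper treats this proposition as an elementary input: the first implication is cited from \cite[Corollary~2.7]{schlierf2024} (where it is proved by purely one-dimensional arguments on curves in $\H^2$, with no varifold machinery), and the second is deduced from the first by the same subsequence trick you mention. The logical order in the paper is that \cref{prop:eich-grun} is a preliminary estimate that \emph{feeds into} the diameter bound and the Li--Yau argument, not the other way around.
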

\begin{proof}
    The first implication is proved with the same arguments as \cite[Corollary 2.7]{schlierf2024}. For \cref{rem:eich-grun}, Suppose that $\liminf_{n\to\infty}\E(u_n)< 8$. Then there exists a subsequence $(u_{n_k})_{k\in\N}$ with $\limsup_{k\to\infty}\E(u_{n_k})<8$. However, $\limsup_{k\to\infty}\Ll_{\H^2}(u_{n_k})=\lim_{n\to\infty}\Ll_{\H^2}(u_{n})=\infty$ then contradicts \cref{eq:prop-eich-grun}.
\end{proof}

\begin{proposition}\label{prop:euc-len-diam}
    Consider an immersion $u\in W^{2,2}(I)$, $u\colon I\to\H^2$ on a compact interval $I\subseteq\R$ and suppose that $\E(u)\leq M$ as well as $u^{(2)}\leq R$. Then there exists $C=C(M)$ with
    \begin{equation}
        \Ll_{\R^2}(u)\leq C(M)\cdot R.
    \end{equation}
\end{proposition}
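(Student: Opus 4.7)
The plan is to parametrize $u$ by Euclidean arc-length $s_{\euc}$, set $y \defeq u^{(2)}$, and let $\theta$ be a continuous lift of the angle that $\partial_{s_{\euc}} u$ makes with the horizontal axis, so that $\partial_{s_{\euc}} u = (\cos\theta, \sin\theta)^t$. A direct computation in the half-plane model (essentially bookkeeping with the Christoffel symbols of $g_p = \langle\cdot,\cdot\rangle/(p^{(2)})^2$) yields the classical geodesic curvature identity $\scurv_g = y\theta' + \cos\theta$, which is consistent with vertical lines being geodesics and horizontal lines being horocycles of curvature one. Squaring, multiplying by $\dd s_h = \dd s_{\euc}/y$, and integrating the cross-term via $2\theta'\cos\theta = 2\tfrac{\dd}{\dd s_{\euc}}\sin\theta$ produces
\begin{equation*}
    \E(u) = \int_I y(\theta')^2 \dd s_{\euc} + 2\sin\theta\big|_{\partial I} + \int_I \frac{\cos^2\theta}{y} \dd s_{\euc}.
\end{equation*}
Since $|\sin\theta|\le 1$, both non-negative integrals are bounded by $\E(u)+4\le M+4$.

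Write $L \defeq \Ll_{\R^2}(u) = \int_I(\cos^2\theta + \sin^2\theta)\dd s_{\euc}$ and split the Euclidean length into its ``horizontal'' and ``vertical'' contributions. The horizontal part is controlled directly from $y\le R$:
\begin{equation*}
    \int_I \cos^2\theta \dd s_{\euc} \leq R \int_I \frac{\cos^2\theta}{y}\dd s_{\euc} \leq (M+4) R.
\end{equation*}
For the vertical part, the plan is to exploit $y' = \sin\theta$. Differentiating $y\sin\theta$ yields the identity $\sin^2\theta = \tfrac{\dd}{\dd s_{\euc}}(y\sin\theta) - y\cos\theta\,\theta'$, so that after integration
\begin{equation*}
    \int_I \sin^2\theta \dd s_{\euc} = [y\sin\theta]_{\partial I} - \int_I y\cos\theta\,\theta' \dd s_{\euc}.
\end{equation*}
The boundary term is bounded by $2R$ in absolute value, and Cauchy-Schwarz gives
\begin{equation*}
    \Bigl|\int_I y\cos\theta\,\theta' \dd s_{\euc}\Bigr| \leq \Bigl(\int_I y(\theta')^2 \dd s_{\euc}\Bigr)^{1/2} \Bigl(\int_I y \dd s_{\euc}\Bigr)^{1/2} \leq \sqrt{(M+4) R L}.
\end{equation*}
Combining yields $L\le (M+6)R + \sqrt{(M+4)RL}$, and a Young's inequality $\sqrt{(M+4)RL}\le \tfrac{1}{2}L + \tfrac{M+4}{2}R$ absorbs the $\sqrt{L}$-factor to produce $L\le (3M+16)R$, which is the asserted bound with $C(M) = 3M+16$.

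The only genuinely technical step is deriving the half-plane curvature identity and the resulting orthogonal-decomposition of $\E(u)$; the remainder of the argument is a routine application of Cauchy-Schwarz and Young's inequality. The $W^{2,2}$-regularity of $u$ is sufficient to justify the integrations by parts and the identity for $\scurv_g$ via a standard smooth density approximation, as in the remark following \cref{eq:br-gr}; no deeper obstacle is anticipated beyond this.
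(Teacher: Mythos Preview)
Your proof is correct and takes a genuinely different route from the paper's. The paper passes through the surface of revolution $f_u$: using the Bryant--Griffiths relation \cref{eq:br-gr} to convert $\E(u)\le M$ into a Willmore bound, it then invokes a diameter-type estimate from \cite[Lemma~2.5]{dallacquamullerschatzlespener2020} to obtain $2L - 2\frac{\partial_x u^{(2)}}{L}u^{(2)}\big|_{\partial I}\le C(M)\,\mu_{f_u}(C)^{1/2}$, and closes the loop via $\mu_{f_u}(C)\le 2\pi LR$ together with a quadratic inequality in $L$. Your argument, by contrast, is purely one-dimensional and intrinsic to the profile curve: the half-plane identity $\scurv_g = y\theta' + \cos\theta$ produces an orthogonal decomposition of $\E(u)$ whose two pieces separately control the horizontal and (after the $y\sin\theta$ integration by parts and Cauchy--Schwarz) vertical contributions to $L$. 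This avoids any reference to the surface of revolution or to the cited lemma, is entirely self-contained, and yields the explicit constant $C(M)=3M+16$; the paper's approach is shorter only because it outsources the main estimate. Both arguments share the same structural endgame of absorbing an $\sqrt{RL}$ term into $L$.
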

\begin{proof}
    W.l.o.g. suppose that $|\partial_xu|\equiv L=\Ll_{\R^2}(u)$ and $I=[0,1]$. Furthermore, after approximating $u$ in $W^{2,2}(I)$ by a sequence of $C^{\infty}$-immersions, we may w.l.o.g. suppose that $u$ is smooth. Then the computations in the proof of \cite[Lemma 2.5]{dallacquamullerschatzlespener2020} yield that 
    \begin{equation}
        2L - 2\frac{\partial_x u^{(2)}}{L}u^{(2)}\Big|_{\partial I} \leq C(\W(f_u)) (\mu_{f_u}(\Sigma))^{\frac12} \leq C(M)(\mu_{f_u}(\Sigma))^{\frac12}.
    \end{equation}
    Therefore, $L\leq C(M) (\mu_{f_u}(\Sigma))^{\frac12} + 2R$. Since furthermore
    \begin{equation}
        \mu_{f_u}(\Sigma) = 2\pi L \int_0^1 u^{(2)}\dd x \leq 2\pi L R,
    \end{equation}
    the claim follows.
\end{proof}

\subsection{On curves with infinite hyperbolic length}

As already noted in previous works, when studying the Willmore functional in the class of surfaces of revolution, encountering singular behavior is closely related to encountering profile curves with unbounded hyperbolic length, for instance cf. \cite{muellerspener2020,dallacquamullerschatzlespener2020,eichmanngrunau2019,schlierf2024}. Therefore, we investigate the behavior of such profile curves at ``hyperbolic infinity'' --- as it turns out, the tangent vectors become more and more vertical, c.f. \cref{eq:lem-vert-0}. The general strategy used in the next Lemma follows arguments in \cite[proof of Lemma 4.2]{eichmanngrunau2019}.

\begin{lemma}\label{lem:vert}
    Consider $u\colon[a,b)\to\H^2$ where $b\in\R\cup\{\infty\}$ such that $u\in W^{2,2}([a,r])$ for all $a<r<b$. Assume that $u$ is an immersion, i.e. $|\partial_xu|>0$. Suppose that $\Ll_{\H^2}(u)=\infty$, $\E(u)<\infty$. Then there exists a sequence $(x_n)_{n\in\N}\subseteq [a,b)$ with $x_n\to b$ such that
    \begin{equation}\label{eq:lem-vert-0}
        \frac{|\partial_xu^{(2)}(x_n)|}{|\partial_xu(x_n)|} \to 1\quad\text{for $n\to\infty$}.
    \end{equation} 
\end{lemma}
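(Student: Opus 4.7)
My plan is to parametrize the curve by hyperbolic arc-length and track the angle the unit tangent makes with the vertical direction in the half-plane model, then extract a sequence along which this angle's sine vanishes using an $L^2$-bootstrap.

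First, since $\Ll_{\H^2}(u) = \infty$ and $|\partial_x u|_g > 0$, I reparametrize by hyperbolic arc-length to obtain (still denoting by $u$) a curve on $[0,\infty)$ with $|\partial_s u|_g = 1$. Regularity $u\in W^{2,2}([a,r])$ for $r < b$ transfers to $W^{2,2}_{\loc}([0,\infty))$ after reparametrization. Writing the Euclidean tangent as $\partial_s u = u^{(2)}(\sin\theta, \cos\theta)$ for a continuous lift $\theta\in W^{1,2}_{\loc}([0,\infty))$, a direct check shows that the quantity in \cref{eq:lem-vert-0} is reparametrization-invariant and equals $|\cos\theta(s)|$. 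Hence the lemma reduces to producing a sequence $s_n \to \infty$ with $\sin\theta(s_n) \to 0$.

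Next I compute $\curv = \nabla_{\partial_s u}\partial_s u$ using the Christoffel symbols of the half-plane metric (namely $\Gamma^1_{12} = -1/u^{(2)}$, $\Gamma^2_{11} = 1/u^{(2)}$, $\Gamma^2_{22} = -1/u^{(2)}$). This yields $\curv = u^{(2)}(\partial_s\theta - \sin\theta)(\cos\theta, -\sin\theta)$ and hence $|\curv|_g = |\partial_s\theta - \sin\theta|$, so the hypothesis $\E(u) < \infty$ becomes
\begin{equation*}
    r(s) \vcentcolon= \partial_s\theta(s) - \sin\theta(s) \in L^2(0,\infty).
\end{equation*}
Now I use $\partial_s(\cos\theta) = -\sin\theta\,\partial_s\theta = -\sin^2\theta - r\sin\theta$. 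Integrating over $[0,S]$, applying $|\cos\theta|\leq 1$ on the left and Cauchy-Schwarz on the right gives
\begin{equation*}
    \int_0^S \sin^2\theta\, ds \leq 2 + \|r\|_{L^2(0,\infty)} \Bigl(\int_0^S \sin^2\theta\, ds\Bigr)^{1/2}.
\end{equation*}
Reading this as a quadratic inequality in $\bigl(\int_0^S \sin^2\theta\,ds\bigr)^{1/2}$ produces a bound uniform in $S$, so $\sin\theta \in L^2(0,\infty)$.

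Finally, from $\sin\theta \in L^2(0,\infty)$, a pigeonhole on dyadic intervals (or simply the fact that $\int_n^{n+1}\sin^2\theta\,ds \to 0$) supplies $s_n \to \infty$ with $\sin\theta(s_n) \to 0$, hence $|\cos\theta(s_n)|\to 1$, completing the proof. I expect the main technical obstacle to be the two justifications of regularity: (i) extracting a continuous lift $\theta$ and justifying the change of variables in the mere $W^{2,2}_{\loc}$ setting, which I would handle by approximating $u$ on each $[a,r]$ by smooth immersions as in the remark following \cref{eq:br-gr} and passing to the limit in the identity for $|\curv|_g$; (ii) checking that the bootstrap $\sin^2\theta \leq 2 + \|r\|_{L^2} \|\sin\theta\|_{L^2}$ survives this approximation, which it does because $r$ and $\sin\theta$ are continuous functions of $u$ in $C^1_{\loc}$ and the pointwise identities above hold a.e.
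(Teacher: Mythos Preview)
Your argument is correct and takes a genuinely different route from the paper. The paper argues by contradiction: assuming $\frac{|\partial_x u^{(2)}|}{|\partial_x u|}\leq 1-2\delta$ on a tail, it mollifies, writes the (mollified) curve as a graph $y\mapsto(y,g(y))$ with $|g'|\leq C_\delta$, and then invokes the explicit elastic-energy formula for graphs to show $\E\geq c(\delta)\,\Ll_{\H^2}$ on that tail, contradicting $\Ll_{\H^2}=\infty$ with $\E<\infty$. Your approach is direct rather than by contradiction: after parametrizing by hyperbolic arc-length you exhibit the pointwise identity $|\curv|_g=|\partial_s\theta-\sin\theta|$ and integrate $\partial_s(\cos\theta)=-\sin^2\theta-r\sin\theta$ to bootstrap $\sin\theta\in L^2(0,\infty)$, from which the sequence drops out. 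This avoids the external graph formula from \cite{schlierf2024} and is more self-contained; it also yields the quantitative byproduct $\sin\theta\in L^2$, which is stronger than what the lemma asserts. The paper's route, on the other hand, makes transparent the geometric mechanism (non-vertical tangent $\Rightarrow$ graph $\Rightarrow$ length controlled by energy) and dovetails with the later use of \cref{eq:app-br-gr} in \cref{prop:vert}. The regularity caveats you flag---existence of a continuous $W^{1,2}_{\loc}$ lift $\theta$ and preservation of $W^{2,2}_{\loc}$ under arc-length reparametrization---are routine since $u\in C^1$ with $|\partial_x u|>0$ and $u^{(2)}>0$ locally; alternatively, one can bypass the reparametrization entirely by keeping the original parameter and carrying the weight $|\partial_x u|/u^{(2)}$ through the integrals.
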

\begin{proof}
    Note that $u\in C^1([a,b))$. Suppose that no such sequence $(x_n)_{n\in\N}$ satisfying \cref{eq:lem-vert-0} exists. Then, there are $a'\in[a,b)$ and $\delta>0$ with $\frac{|\partial_xu^{(2)}(x)|}{|\partial_xu(x)|}\leq 1-2\delta$ for all $x\in [a',b)$. 
    
    Denote by $(\varphi_{\varepsilon})_{\varepsilon>0}$ the standard mollifiers on $\R$ and write $u_{\varepsilon} = \varphi_{\varepsilon} * u$. Then $u_{\varepsilon} \to u$ in $W^{2,2}(I)$ for any compact interval $I\subseteq (a,b)$. Particularly, $u_{\varepsilon}$ is an immersion for $\varepsilon$ sufficiently small and, for $I\subseteq (a,b)$ compact,
    \begin{equation}
        \E(u_{\varepsilon}|_I) \to \E(u|_I)\quad\text{and}\quad \Ll_{\H^2}(u_{\varepsilon}|_I)\to \Ll_{\H^2}(u|_I).
    \end{equation}
    Define $C_{\delta} = \frac{1-\delta}{\delta}$ and choose some $a'<r<b$ such that 
    \begin{equation}\label{eq:lem-vert-1}
        \Ll_{\H^2}(u|_{[a',r]}) > (1+C_{\delta}^2)(\E(u)+5)+1.
    \end{equation}
    Furthermore, there exists $\varepsilon'>0$ sufficiently small such that
    \begin{equation}\label{eq:lem-vert-2}
        |\E(u_{\varepsilon'}|_{[a',r]})-\E(u|_{[a',r]})|<1\text{, }|\Ll_{\H^2}(u_{\varepsilon'}|_{[a',r]})-\Ll_{\H^2}(u|_{[a',r]})|<1 \text{ and } 
    \end{equation}
    $\frac{|\partial_xu_{\varepsilon'}^{(2)}|}{|\partial_xu_{\varepsilon'}|}\leq 1-\delta$ on $[a',r]$ and such that $u_{\varepsilon'}$ maps into $\H^2$. Especially,
    \begin{equation}
        (1-(1-\delta)^2)({\partial_xu_{\varepsilon'}^{(2)}})^2 \leq (1-\delta)^2({\partial_xu_{\varepsilon'}^{(1)}})^2
    \end{equation} 
    so that $u_{\varepsilon'}^{(1)}\colon [a',r]\to u_{\varepsilon'}^{(1)}([a',r])=J$ is a diffeomorphism. Writing $g=u_{\varepsilon'}^{(2)}\circ ({u_{\varepsilon'}^{(1)}})^{-1}\colon J\to (0,\infty)$, one obtains $\widetilde{u_{\varepsilon'}}(y)=u_{\varepsilon'}\circ (u_{\varepsilon'}^{(1)})^{-1}(y)=(y,g(y))$ as a reparametrization of $u_{\varepsilon'}$ as a graph. Moreover, $\frac{|\partial_xu_{\varepsilon'}^{(2)}(x)|}{|\partial_xu_{\varepsilon'}(x)|}\leq 1-\delta$ yields that $|\partial_yg(y)|\leq C_{\delta}$ for all $y\in J$. Then, using \cref{eq:lem-vert-2}, and the explicit formula for the elastic energy of graphs in \cite[(2.6)]{schlierf2024},
    \begin{align}
        \E(u)+5&\geq\E(\widetilde{u_{\varepsilon'}}|_{J}) +4 \geq \int_J \frac{1}{g(y)\sqrt{1+(\partial_yg(y))^2}}\dd y \geq \frac{1}{1+C_{\delta}^2} \int_J \frac{\sqrt{1+(\partial_yg(y))^2}}{g(y)}\dd y \\
        &= \frac{1}{1+C_{\delta}^2} \Ll_{\H^2}(\widetilde{u_{\varepsilon'}}) \geq \frac{1}{1+C_{\delta}^2} (\Ll_{\H^2}(u|_{[a',r]})-1),
    \end{align}
    a contradiction to \cref{eq:lem-vert-1}! This shows the claim.
\end{proof}

\begin{proposition}\label{prop:vert}
    Under the assumptions of \cref{lem:vert}, one has
    \begin{equation}\label{eq:vert}
        \lim_{x\to b} \frac{\partial_xu^{(2)}(x)}{|\partial_xu(x)|} = \pm 1.
    \end{equation}
\end{proposition}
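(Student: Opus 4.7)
The plan is to combine the Bryant--Griffiths-type inequality \cref{eq:app-br-gr} with \cref{lem:vert} to upgrade convergence along a sequence to convergence as $x\to b$. Set $\phi(x)\defeq \partial_xu^{(2)}(x)/|\partial_xu(x)|$; since $u\in C^1([a,b))$ and $u$ is an immersion, $\phi$ is continuous with values in $[-1,1]$. \cref{lem:vert} furnishes a sequence $x_n\to b$ with $|\phi(x_n)|\to 1$. Hence it suffices to show that $\lim_{x\to b}\phi(x)$ exists in $\R$, for this limit is then automatically in $\{-1,+1\}$.

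The key observation is that \cref{eq:app-br-gr} can be read as a monotonicity statement. For any $a\leq\alpha\leq\beta<b$, applying \cref{eq:app-br-gr} to $u|_{[\alpha,\beta]}\in W^{2,2}([\alpha,\beta])$ yields
\begin{equation*}
    \phi(\beta)-\phi(\alpha) \leq \tfrac{1}{4}\E(u|_{[\alpha,\beta]}) = \tfrac{1}{4}\bigl[\E(u|_{[a,\beta]})-\E(u|_{[a,\alpha]})\bigr].
\end{equation*}
Equivalently, the function $\Phi\colon[a,b)\to\R$ defined by $\Phi(x)\defeq\phi(x)-\tfrac{1}{4}\E(u|_{[a,x]})$ is non-increasing. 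Since $\phi\geq -1$ and $\E(u)<\infty$, $\Phi$ is bounded below by $-1-\tfrac{1}{4}\E(u)$, and being monotone and bounded it must admit a limit $\lim_{x\to b}\Phi(x)\in\R$.

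Because $\E(u)<\infty$ forces $\E(u|_{[a,x]})\nearrow \E(u)$ as $x\to b$, one concludes
\begin{equation*}
    \lim_{x\to b}\phi(x) = \lim_{x\to b}\Phi(x) + \tfrac{1}{4}\E(u),
\end{equation*}
which exists in $\R$; in view of \cref{lem:vert} it equals $\pm 1$. I do not anticipate a serious obstacle: the only conceptual step is recognizing that the one-sided estimate \cref{eq:app-br-gr}, combined with the non-decreasing quantity $x\mapsto\E(u|_{[a,x]})$, is already sufficient to produce a monotone surrogate $\Phi$ for $\phi$ and thereby forces a limit of $\phi$ at $b$ without any interlacing argument.
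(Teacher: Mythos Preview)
Your proof is correct and takes a genuinely different route from the paper's. The paper argues by contradiction in two stages: first it shows $|\phi|\to 1$ by assuming otherwise, producing interlacing sequences $x_n'<x_n<x_{n+1}'$ with $|\phi(x_n)|\to 1$ and $|\phi(x_n')|\leq 1-\varepsilon$, and then applying \cref{eq:app-br-gr} on $[x_n',x_n]$ or on $[x_n,x_{n+1}']$ (depending on the sign of $\phi(x_n)$) to harvest energy at least $4\varepsilon+o(1)$ on each of infinitely many disjoint intervals, contradicting $\E(u)<\infty$; a second pass with the same interlacing idea then excludes oscillation between $+1$ and $-1$. Your argument sidesteps all of this: the single inequality $\phi(\beta)-\phi(\alpha)\leq\tfrac14\E(u|_{[\alpha,\beta]})$, together with the additivity of $\E$ over intervals, already says that $\Phi=\phi-\tfrac14\E(u|_{[a,\cdot]})$ is non-increasing and bounded, so $\phi$ has a limit at $b$, which \cref{lem:vert} identifies as $\pm 1$. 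This is shorter, avoids both the sign case distinction and the two-step structure, and makes transparent that the one-sided estimate \cref{eq:app-br-gr} is already enough. The paper's interlacing argument, by contrast, would remain available in settings where one only has a qualitative lower bound on the energy over oscillation intervals rather than the exact additivity used to define $\Phi$.
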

\begin{proof}
    Firstly, we show that
    \begin{equation}\label{eq:vert-1}
        \lim_{x\to b} \frac{|\partial_xu^{(2)}(x)|}{|\partial_xu(x)|} = 1.
    \end{equation}
    Suppose that \cref{eq:vert-1} fails. Using \cref{lem:vert}, it follows that there exist $\varepsilon>0$ and sequences $(x_n)_{n\in\N}$ and $(x_n')_{n\in\N}$ converging to $b$ with $x_n'<x_n<x_{n+1}'$ for all $n\in\N$ such that 
    \begin{equation}
        \frac{|\partial_xu^{(2)}(x_n)|}{|\partial_xu(x_n)|} \to 1\quad\text{and}\quad \frac{|\partial_xu^{(2)}(x_n')|}{|\partial_xu(x_n')|} \leq 1-\varepsilon\text{ for all $n\in\N$}.
    \end{equation}
    Fix now $n\in\N$. Using \cref{eq:app-br-gr}, if $\partial_xu^{(2)}(x_n)\geq 0$,  
    \begin{equation}
        \E(u|_{[x_n',x_n]}) \geq 4 \Bigl(\frac{\partial_xu^{(2)}(x_n)}{|\partial_xu(x_n)|} - \frac{\partial_xu^{(2)}(x_n')}{|\partial_xu(x_n')|}\Bigr) \geq 4 \Bigl( 1 + o(1) - \frac{|\partial_xu^{(2)}(x_n')|}{|\partial_xu(x_n')|} \Bigr) \geq 4\varepsilon + o(1).
    \end{equation}
    If however $\partial_xu^{(2)}(x_n)<0$, then
    \begin{equation}
        \E(u|_{[x_n,x_{n+1}']}) \geq 4 \Bigl(\frac{\partial_xu^{(2)}(x_{n+1}')}{|\partial_xu(x_{n+1}')|} - \frac{\partial_xu^{(2)}(x_n)}{|\partial_xu(x_n)|}\Bigr) \geq 4 \Bigl( 1 + o(1) - \frac{|\partial_xu^{(2)}(x_{n+1}')|}{|\partial_xu(x_{n+1}')|} \Bigr) \geq 4\varepsilon + o(1).
    \end{equation}
    Altogether, we have shown that
    \begin{equation}
        \E(u|_{[x_n',x_{n+1}']}) \geq 4\varepsilon + o(1)\quad\text{for $n\to\infty$}.
    \end{equation}
    Particularly, $\E(u)=\infty$, a contradiction! So \cref{eq:vert-1} is proved. The above argument now also yields \cref{eq:vert}. Indeed, if \cref{eq:vert} fails, then there are sequences $(x_n)_{n\in\N},(x_n')_{n\in\N}$ converging to $b$ with $x_n'<x_n<x_{n+1}'$ for all $n\in\N$ and
    \begin{equation}
        \frac{\partial_xu^{(2)}(x_n)}{|\partial_xu(x_n)|} \to 1\quad\text{and}\quad \frac{\partial_xu^{(2)}(x_n')}{|\partial_xu(x_n')|} \to -1.
    \end{equation}
    Using \cref{eq:app-br-gr}, one gets
    \begin{equation}
        \E(u|_{[x_n',x_n]}) \geq 4 \Bigl(\frac{\partial_xu^{(2)}(x_n)}{|\partial_xu(x_n)|} - \frac{\partial_xu^{(2)}(x_n')}{|\partial_xu(x_n')|}\Bigr) \to 8 \quad\text{for $n\to\infty$}
    \end{equation}
    which again yields $\E(u)=\infty$, a contradiction!
\end{proof}

Combining the verticality at ``hyperbolic infinity'' with \cref{eq:app-br-gr} yields lower bounds on the elastic energy which we state in the following Lemmata. 

\begin{lemma}\label{lem:conv-cl-2}
    Consider $u\colon [a,b)\to\H^2$ with $u\in W^{2,2}([a,r])$ an immersion for all $a<r<b$ where $b\in\R\cup\{\infty\}$ and suppose that $|u(x)|\to\infty$ for $x\to b$ while $\E(u)<\infty$. Then 
    \begin{equation}
       \E(u|_{[a,x]})\geq 4-4\frac{\partial_xu^{(2)}(a)}{|\partial_xu(a)|}+o(1)\quad\text{as $x\nearrow b$}.
    \end{equation} 
\end{lemma}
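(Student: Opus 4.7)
The plan is to reduce the statement to \cref{eq:app-br-gr} applied on $I=[a,x]$, which gives
\[
    \E(u|_{[a,x]})\geq 4\frac{\partial_xu^{(2)}(x)}{|\partial_xu(x)|}-4\frac{\partial_xu^{(2)}(a)}{|\partial_xu(a)|}.
\]
Hence everything boils down to showing $\phi(x)\defeq\frac{\partial_xu^{(2)}(x)}{|\partial_xu(x)|}\to 1$ as $x\nearrow b$.

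In order to invoke \cref{prop:vert}, I would first verify $\Ll_{\H^2}(u)=\infty$. This follows from the metric completeness of the hyperbolic plane: a $C^1$-curve of finite hyperbolic length extends continuously to a limit point in $\H^2$ and thus stays bounded in $\R^2$, contradicting the hypothesis $|u(x)|\to\infty$. Therefore \cref{prop:vert} applies and yields $\phi(x)\to\pm 1$ as $x\to b$.

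The main obstacle is ruling out the alternative $\phi\to -1$, since \cref{prop:vert} by itself does not fix the sign. If $\phi\to -1$, then for some $a'\in[a,b)$ one has $\partial_xu^{(2)}(y)<-\tfrac12|\partial_xu(y)|$ on $[a',b)$; integrating yields
\[
    \int_{a'}^x|\partial_xu|\dd y\leq -2\int_{a'}^x\partial_xu^{(2)}\dd y=2(u^{(2)}(a')-u^{(2)}(x))\leq 2u^{(2)}(a'),
\]
where the final inequality uses $u^{(2)}>0$. Hence $u|_{[a',b)}$ has uniformly bounded Euclidean arclength, so its image stays in a Euclidean ball; together with the compact image of $u|_{[a,a']}$ this contradicts $|u(x)|\to\infty$. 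Thus $\phi(x)\to 1$ and the lemma follows by combining with the displayed consequence of \cref{eq:app-br-gr} above.
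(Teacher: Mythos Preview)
Your proof is correct and follows the same overall plan as the paper: establish $\Ll_{\H^2}(u)=\infty$, invoke \cref{prop:vert}, rule out the limit $-1$, and conclude via \cref{eq:app-br-gr}. The only genuine difference lies in how the alternative $\phi\to -1$ is excluded. The paper first shows that $u^{(2)}$ must be unbounded (via a rescaling argument and \cref{prop:elen-vs-euclen}), and then observes that $\phi\to -1$ would force $u^{(2)}$ to eventually be decreasing and hence bounded. Your argument bypasses this intermediate step entirely: from $\partial_xu^{(2)}<-\tfrac12|\partial_xu|$ you bound the Euclidean arclength of $u|_{[a',b)}$ directly by $2u^{(2)}(a')$, contradicting $|u(x)|\to\infty$ at once. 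This is slightly more elementary since it avoids the appeal to \cref{prop:elen-vs-euclen}; on the other hand, the paper's route isolates the qualitative fact that $u^{(2)}$ is unbounded, which is of some independent interest.
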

\begin{proof}
    Note that the condition $|u(x)|\to\infty$ for $x\to b$ ensures that $\Ll_{\H^2}(u)=\infty$.
    
    Next, we argue that $u^{(2)}$ is necessarily unbounded. Indeed, for contradiction, suppose that $u^{(2)}\leq R$. Fixing any sequence $x_n\nearrow b$ and denoting by $u_n$ a reparametrization of $\frac{1}{\Ll_{\R^2}(u|_{[a,x_n]})}u|_{[a,x_n]}$ on $[0,1]$, we have $\Ll_{\R^2}(u_n)=1$ for all $n\in\N$ and, since $\Ll_{\R^2}(u|_{[a,x_n]})\to\infty$, also $u_n^{(2)}\leq \frac{R}{\Ll_{\R^2}(u|_{[a,x_n]})}\to 0$ uniformly on $[0,1]$. Altogether, \cref{prop:elen-vs-euclen} yields that $\E(u_n)\to\infty$, i.e. $\E(u)=\infty$, a contradiction!

    Using \cref{prop:vert}, $\lim_{x\to b}\frac{\partial_xu^{(2)}(x)}{|\partial_xu(x)|} = \pm1$. If the limit equals $-1$, then there exists $a<r<b$ with $\partial_xu^{(2)}<0$ on $(r,b)$. Then one obtains that $u^{(2)}(x)\leq\max_{[a,r]}u^{(2)}<\infty$ for all $x\in[a,b)$, a contradiction to the above. Therefore,
    \begin{equation}
        \lim_{x\to b}\frac{\partial_xu^{(2)}(x)}{|\partial_xu(x)|} = 1.
    \end{equation}   
    With \cref{eq:app-br-gr},
    \begin{align}
        \E(u)\geq \E(u|_{[a,x]}) \geq 4\frac{\partial_xu^{(2)}}{|\partial_xu|}\Big|_a^{x} \to 4 - 4\frac{\partial_xu^{(2)}(a)}{|\partial_xu(a)|} \quad\text{for $x\to b$}.\qquad\qedhere
    \end{align}
\end{proof}

\begin{lemma}\label{lem:en-est-dd}
    Consider $u\colon [a,b)\to\H^2$ with $u\in W^{2,2}([a,r])$ an immersion for all $a<r<b$ where $b\in\R\cup\{\infty\}$ with $\E(u)<\infty$ and suppose that $u(x)\to (p^{(1)},0)^t$ for $x\to b$ for some $p^{(1)}\in\R$. Then 
    \begin{equation}
       \E(u|_{[a,x]})\geq -4-4\frac{\partial_xu^{(2)}(a)}{|\partial_xu(a)|}+o(1)\quad\text{as $x\nearrow b$}.
    \end{equation}
\end{lemma}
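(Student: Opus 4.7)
The proof runs in close parallel to \cref{lem:conv-cl-2}: once we know the tangent vector becomes vertical near $b$, the desired lower bound follows immediately from \cref{eq:app-br-gr}. The only genuine difference is determining the \emph{sign} of the vertical limit.

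\textbf{Step 1: Infinite hyperbolic length.} I would first verify that $\Ll_{\H^2}(u)=\infty$ so that \cref{prop:vert} applies. Since $|\partial_xu^{(2)}|\leq|\partial_xu|$, one has
\begin{equation}
    \Ll_{\H^2}(u|_{[a,x]}) = \int_a^x \frac{|\partial_\xi u|}{u^{(2)}}\dd\xi \geq \Bigl|\int_a^x \frac{\partial_\xi u^{(2)}}{u^{(2)}}\dd\xi\Bigr| = \bigl|\log u^{(2)}(x) - \log u^{(2)}(a)\bigr|.
\end{equation}
The right-hand side tends to $+\infty$ as $x\nearrow b$ because $u^{(2)}(x)\to 0$ by the assumption $u(x)\to(p^{(1)},0)^t$.

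\textbf{Step 2: Verticality and its sign.} With $\Ll_{\H^2}(u)=\infty$ and $\E(u)<\infty$, \cref{prop:vert} applies and gives
\begin{equation}
    \lim_{x\to b}\frac{\partial_xu^{(2)}(x)}{|\partial_xu(x)|} \in \{-1,+1\}.
\end{equation}
I claim the limit must be $-1$. Indeed, if the limit were $+1$, there would exist $a<r<b$ with $\partial_xu^{(2)}>0$ on $(r,b)$, hence $u^{(2)}$ would be increasing on $(r,b)$; this contradicts $u^{(2)}(x)\to 0$ while $u^{(2)}(r)>0$.

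\textbf{Step 3: Conclusion via the Bryant--Griffiths identity.} Applying \cref{eq:app-br-gr} to the restriction $u|_{[a,x]}$ and passing to the limit $x\nearrow b$ gives
\begin{equation}
    \E(u|_{[a,x]}) \geq 4\frac{\partial_\xi u^{(2)}}{|\partial_\xi u|}\Big|_{\xi=a}^{\xi=x} = 4\frac{\partial_xu^{(2)}(x)}{|\partial_xu(x)|} - 4\frac{\partial_xu^{(2)}(a)}{|\partial_xu(a)|} \xrightarrow{x\to b} -4 - 4\frac{\partial_xu^{(2)}(a)}{|\partial_xu(a)|},
\end{equation}
which is exactly the claim.

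The only place requiring a new observation (beyond what was done in \cref{lem:conv-cl-2}) is ruling out the $+1$ alternative in Step~2; this is where the hypothesis $u(x)\to(p^{(1)},0)^t$ is used in a qualitatively different way than the hypothesis $|u(x)|\to\infty$ used in \cref{lem:conv-cl-2}. Everything else is routine.
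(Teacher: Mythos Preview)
Your proof is correct and follows essentially the same approach as the paper: both verify $\Ll_{\H^2}(u)=\infty$, invoke \cref{prop:vert}, rule out the $+1$ alternative via eventual monotonicity of $u^{(2)}$, and conclude with \cref{eq:app-br-gr}. Your Step~1 spells out the length estimate via $|\log u^{(2)}(x)-\log u^{(2)}(a)|$ a bit more explicitly than the paper, but otherwise the arguments are the same.
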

\begin{proof}
    Note that the condition $u(x)\to (p^{(1)},0)^t$ for $x\to b$ ensures that $\Ll_{\H^2}(u)=\infty$. By \cref{prop:vert}, with a similar argument as in \cref{lem:conv-cl-2}, using $u^{(2)}(x)\to 0$ for $x\nearrow b$, we have 
    \begin{equation}
        \lim_{x\to b} \frac{\partial_xu^{(2)}(x)}{|\partial_xu(x)|} = -1.
    \end{equation}
    This immediately yields the claim since, using \cref{eq:app-br-gr}, for any sequence $x_n\nearrow b$,
    \begin{align}
        \E(u)\geq \E(u|_{[a,x_n]}) \geq 4\frac{\partial_xu^{(2)}}{|\partial_xu|}\Big|_a^{x_n} = -4 - \frac{\partial_xu^{(2)}(a)}{|\partial_xu(a)|} + o(1).\qquad\qedhere
    \end{align}
\end{proof}

\subsection{Compactness}

The next lemma summarizes straight-forward consequences of $W^{2,2}$-bounds one obtains from bounds on the Willmore energy (or equivalently from bounds on the elastic energy by \cref{eq:br-gr}) as well as the quantization \cref{thm:ch-ve}.

\begin{lemma}\label{lem:compactness}
    Let $u_n\colon I\to\H^2$ be immersions on a compact interval $I\subseteq\R$. Suppose that $\E(u_n)\leq M$, $|u_n|\leq K$ and $|\partial_xu_n|\equiv L_n$ where $0<\ell\leq L_n\leq L$ for all $n\in\N$. Then there exists $u_{\infty}\in W^{1,\infty}(I,\R\times[0,\infty))$ such that, after passing to a subsequence, 
    \begin{equation}\label{eq:cpt-1}
        u_n\rightharpoonup^* u_{\infty}\text{ in }W^{1,\infty}\text{ and }u_n\to u_{\infty}\text{ uniformly.}
    \end{equation}
    Further, $Z=\{u_{\infty}^{(2)}=0\}$ is finite and, for any compact interval $J\subseteq I\setminus Z$, 
    \begin{equation}\label{eq:cpt-2}
        u_n|_J\rightharpoonup u_{\infty}|_J \quad\text{in $W^{2,2}(J,\R^2)$}.
    \end{equation}
\end{lemma}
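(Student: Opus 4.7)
The plan is to work through the three assertions in order, relying on the uniform bounds given in the hypotheses together with the estimates of the preceding subsections. First I would extract the weak-$*$ and uniform limit. Since $|\partial_x u_n|\equiv L_n\leq L$ and $|u_n|\leq K$, the sequence is uniformly bounded in $W^{1,\infty}(I,\R^2)$. Applying Banach-Alaoglu together with Arzelà-Ascoli (for equi-Lipschitz families on a compact interval), I obtain a subsequence and a limit $u_\infty\in W^{1,\infty}(I,\R^2)$ with the convergences in \cref{eq:cpt-1}. Since each $u_n^{(2)}>0$, the uniform limit satisfies $u_\infty^{(2)}\geq 0$, which places $u_\infty$ in the correct target.

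The central step is the finiteness of $Z=\{u_\infty^{(2)}=0\}$. I would argue by contradiction: suppose $Z$ contains infinitely many distinct points $z_1,z_2,\dots$. Passing to a subsequence of the $z_k$, I can find pairwise disjoint compact subintervals $J_k\subseteq I$ such that $J_k$ contains $z_k$ while $u_\infty^{(2)}$ is bounded below by some $\delta_k>0$ at the endpoints of $J_k$. By uniform convergence, on each $J_k$ the sequence $u_n$ satisfies $u_n^{(2)}\geq \delta_k/2$ at the endpoints while $\min_{J_k} u_n^{(2)}\to 0$ as $n\to\infty$. The claim is that on each $J_k$ the elastic energy is bounded below by a uniform positive constant $\varepsilon_0>0$ independent of $k$ and of $n$. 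This is the finite analogue of \cref{lem:en-est-dd}: splitting $J_k$ at a point where $u_n^{(2)}$ is minimal and applying \cref{eq:app-br-gr} on each half, combined with the verticality result \cref{prop:vert} and the quantization \cref{thm:ch-ve}, yields that each deep dip to the axis costs a uniformly positive amount of elastic energy. Summing gives $\E(u_n)\geq k\varepsilon_0$ for arbitrarily large $k$, contradicting $\E(u_n)\leq M$.

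Finally, on any compact $J\subseteq I\setminus Z$, continuity and compactness give $\delta>0$ with $u_\infty^{(2)}\geq\delta$ on $J$, and uniform convergence yields $u_n^{(2)}\geq\delta/2$ on $J$ for all $n$ large. On this range the hyperbolic and Euclidean metrics (and their induced curvature quantities) are uniformly comparable, so the bounds $\E(u_n)\leq M$ and $|\partial_x u_n|=L_n\in[\ell,L]$ translate into a uniform $W^{2,2}(J,\R^2)$ bound on the Euclidean parametrizations $u_n|_J$. A further diagonal subsequence then converges weakly in $W^{2,2}(J)$ for every such $J$, and the weak limit is identified as $u_\infty|_J$ via the already-established uniform convergence.

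The main obstacle is the finiteness of $Z$: the estimates \cref{prop:vert,lem:conv-cl-2,lem:en-est-dd} are formulated in the limit $x\to b$, whereas here one needs a uniform lower bound on the elastic energy of each $u_n$ over $J_k$ while neither $u_n$ actually touches the axis nor does $k$ refer to a limit. Making the lower bound genuinely uniform in $n$ and in $k$ is where the quantization \cref{thm:ch-ve} is essential; the rest of the argument is then a standard weak-compactness and comparability bookkeeping.
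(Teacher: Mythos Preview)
Your treatment of the first and third assertions is correct and matches the paper: the $W^{1,\infty}$ bound gives weak-$*$ and uniform subsequential convergence, and once $J$ avoids $Z$ the uniform lower bound on $u_n^{(2)}$ makes hyperbolic and Euclidean quantities comparable, yielding a $W^{2,2}(J)$ bound and hence weak $W^{2,2}$ convergence. The paper carries out this last step by an explicit pointwise computation (using the constant-speed identity $\langle\partial_x^2u_n,\partial_xu_n\rangle=0$ to simplify \cref{eq:ds2u} and then bounding $\|\partial_x^2u_n\|_{L^2(J)}$ in terms of $\E(u_n)$), but your comparability argument is equivalent.

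For the finiteness of $Z$, however, you are taking an unnecessary and partly ill-posed detour. The results \cref{prop:vert}, \cref{lem:conv-cl-2}, \cref{lem:en-est-dd} concern a \emph{single} curve with infinite hyperbolic length whose trace approaches the axis at an open endpoint; they do not furnish the uniform-in-$n$, uniform-in-$k$ lower bound on $\E(u_n|_{J_k})$ that your contradiction scheme requires, and you yourself flag this as the main obstacle. The paper bypasses all of this and simply applies \cref{thm:ch-ve} directly: the hypotheses of the lemma (uniform speed bounds, uniform convergence, $\sup_n\E(u_n)\leq M$) are exactly the hypotheses of \cref{thm:ch-ve}, and writing $M\leq 8E+\eta$ with $E\in\N_0$ and $\eta<8$ immediately bounds $|Z|$ by $E$. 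Your eventual invocation of \cref{thm:ch-ve} inside the contradiction argument is therefore either circular (the theorem already gives the conclusion) or redundant; it should be the entire argument for this step, not one ingredient among several.
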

\begin{proof}
    The assumptions immediately imply that $(u_n)_{n\in\N}\subseteq W^{1,\infty}(I,\R^2)$ is bounded such that, after passing to a subsequence, there exists $u_{\infty}\in W^{1,\infty}(I)$ with \cref{eq:cpt-1}. \cref{thm:ch-ve} then yields that $Z$ is finite with maximum number of elements only depending on $M$. Fix now any compact interval $J\subseteq I\setminus Z$.

    Since $u_{\infty}$ is continuous and $J$ compact, there exists $\varepsilon>0$ with $2\varepsilon < u_{\infty}^{(2)} < \frac{1}{2\varepsilon}$ on $J$. The convergence in \cref{eq:cpt-1} then yields that there exists $N_0\in\N$ such that, for $n\geq N_0$,
    \begin{equation}\label{eq:cpt-2.1}
        \varepsilon < u_n^{(2)} < \frac{1}{\varepsilon} \quad\text{on $J$}.
    \end{equation}
    Differentiating $|\partial_xu_n|\equiv L_n$, one obtains $\langle\partial_x^2u_n,\partial_xu_n\rangle = 0$ so that \cref{eq:ds2u} yields
    \begin{equation}
        \partial_s^2u_n = \Bigl(\frac{u_n^{(2)}}{L_n}\Bigr)^2 \partial_x^2u_n + u_n^{(2)}\frac{\partial_xu_n^{(2)}}{L_n} \frac{\partial_xu_n}{L_n}.
    \end{equation}
    Using $|v_1+v_2|^2\geq \frac12|v_1|^2-|v_2|^2$ and \cref{eq:cpt-2.1}, one thus estimates on $J$ 
    \begin{equation}\label{eq:cpt-3}
        |\partial_s^2u_n|_g^2 = \frac{1}{(u_n^{(2)})^2} |\partial_s^2u_n|^2 \geq \varepsilon^2 \left( \frac{\varepsilon^4}{2L^4} |\partial_x^2u_n|^2 -  \frac{1}{\varepsilon^2} \right) = \frac{\varepsilon^6}{2L^4} |\partial_x^2u_n|^2 - 1.
    \end{equation}
    Since
    \begin{equation}
        \left| \frac{1}{u_n^{(2)}} \begin{pmatrix}
            -2 \partial_su_n^{(1)}\partial_su_n^{(2)}\\
            (\partial_su_n^{(1)})^2-(\partial_su_n^{(2)})^2
        \end{pmatrix} \right|_g^2 = |\partial_su_n|_g^4=1,
    \end{equation}
    \cref{eq:curv,eq:cpt-3,eq:cpt-2.1} yield
    \begin{align}
        M&\geq \E(u_n)\geq \int_J |\curv_n|_g^2\dd s \geq \int_J \bigl(\frac{\varepsilon^6}{4L^4} |\partial_x^2u_n|^2 - \frac32\bigr) \frac{|\partial_xu_n|}{u_n^{(2)}}\dd x \\
        &\geq  \frac{\varepsilon^7\ell}{4L^4} \|\partial_x^2u_n\|_{L^2(J)}^2 - \frac32 \frac{L|J|}{\varepsilon},
    \end{align}
    using again $|v_1+v_2|^2\geq \frac12|v_1|^2-|v_2|^2$. So $(u_n|_J)_{n\in\N}\subseteq W^{2,2}(J)$ is bounded which implies \cref{eq:cpt-2}, using \cref{eq:cpt-1} and a subsequence argument. 
\end{proof}


\section{Proof of the main results}\label{sec:pf-main}

A major ingredient in our proof is the following statement on boundedness. Particularly, any family of cylindrical surfaces of revolution satisfying fixed Dirichlet boundary data whose Willmore energies remain uniformly below $8\pi$ has uniformly bounded diameter. 

As it turns out, a similar observation is crucial in proving the existence of minimizers of the Willmore energy with prescribed Dirichlet boundary data in \cite[Theorem 4.1]{novagapozzetta2020}. While Novaga-Pozzetta have the analog of \cref{prop:bd-euc-len} not only for cylindrical surfaces of revolution but also for general integral varifolds satisfying Dirichlet boundary data, their result requires that the Willmore energies remain uniformly below $4\pi$.

\begin{proposition}\label{prop:bd-euc-len}
    Let $u_n\colon[0,1]\to\H^2$ be immersions satisfying
    \begin{equation}\label{eq:prop-en}
        \limsup_{n\to\infty}\W(f_{u_n})< 8\pi.
    \end{equation}
    Further, assume the boundary conditions
    \begin{equation}\label{eq:prop-dn}
        u_n(y)=p_y\quad\text{and}\quad\frac{\partial_xu_n(y)}{|\partial_xu_n(y)|}=\tau_y\quad\text{for all $y\in\{0,1\}$}
    \end{equation}
    where $p_0,p_1\in\H^2$ and $\tau_y\in\S^1$, $y=0,1$, are given. Then $\Ll_{\R^2}(u_n)$ and thus also $\mathrm{diam}(f_{u_n})$ are uniformly bounded in $n\in\N$.
\end{proposition}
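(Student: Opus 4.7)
The plan is to argue by contradiction: assume $L_n := \Ll_{\R^2}(u_n) \to \infty$ along a subsequence. First, combining \cref{eq:br-gr} with the uniform bound $\W(f_{u_n}) < 8\pi$ and the fixed boundary tangents $\tau_y$ yields a uniform elastic energy bound $\E(u_n) \leq M$. Then \cref{prop:euc-len-diam} forces $\sup u_n^{(2)} \to \infty$, and since $u_n(0) = p_0 \in \H^2$ is fixed, the hyperbolic distance from $p_0$ to a point of maximum height $H_n \to \infty$ is bounded below by $\log(H_n/p_0^{(2)}) \to \infty$, so $\Ll_{\H^2}(u_n) \to \infty$ as well.

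The next step is a rescaling to obtain a compactness setting. I would set $\hat u_n := u_n/L_n$ with unit-Euclidean-speed parameterization. Since Euclidean scaling is an isometry of $\H^2$, $\E(\hat u_n) = \E(u_n) \leq M$, and since Euclidean scaling of $\R^3$ preserves the Willmore functional, $\W(f_{\hat u_n}) = \W(f_{u_n}) < 8\pi$. Moreover $|\hat u_n| \leq 2$, $\hat u_n(y) = p_y/L_n \to 0$ and the boundary tangent $\tau_y$ is unchanged. \Cref{lem:compactness} then extracts, up to a subsequence, a limit $\hat u_\infty \in W^{1,\infty}([0,1], \R \times [0,\infty))$ with $\hat u_\infty(0) = \hat u_\infty(1) = 0$; convergence is uniform on $[0,1]$ and in $W^{2,2}_{\loc}([0,1] \setminus Z)$ for a finite set $Z \supseteq \{0,1\}$.

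Next, I would invoke varifold compactness for $\hat V_n := V_{f_{\hat u_n}}$. These have uniformly bounded area (at most $4\pi$ from $|\hat u_n|\leq 2$), uniformly bounded Willmore, and vanishing boundary measure $|\sigma_{\hat V_n}|(\R^3) = 2\pi(p_0^{(2)} + p_1^{(2)})/L_n \to 0$. A Cauchy--Schwarz estimate $\|\delta \hat V_n\| \leq 2\sqrt{\W(\hat V_n)\,\mu_{\hat V_n}(\R^3)} + |\sigma_{\hat V_n}|(\R^3)$ provides the first-variation bound required in \cref{thm:vari-cpt}, yielding weak subsequential convergence $\hat V_n \rightharpoonup \hat V_\infty$ to an integral $2$-varifold with $\W(\hat V_\infty) < 8\pi$ by lower semicontinuity, and $\sigma_{\hat V_\infty} = 0$ since the boundary measures vanish in the limit.

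The contradiction should now come from \cref{prop:Li-Yau}: since $\sigma_{\hat V_\infty} = 0$, the closed-case inequality gives $\theta^2(\mu_{\hat V_\infty}, z)\pi \leq \W(\hat V_\infty)/4 < 2\pi$ for every $z \in \R^3$. On the other hand, because $\hat u_\infty(0) = \hat u_\infty(1) = 0$, the limit surface accumulates mass at the origin from two distinct caps -- one near $x_1 = 0$ with tangent $\tau_0$ and one near $x_1 = 1$ with tangent $\tau_1$ -- producing a density $\geq 2$ at the origin and thus the desired contradiction. The hard part of the plan will be to rigorously verify this density bound; the delicate case is when $|\tau_0^{(2)}| + |\tau_1^{(2)}| < 2$, where the direct cap contribution at the origin falls short of $2$. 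There I would bootstrap via the asymptotic verticality of the profile from \cref{prop:vert} combined with the one-sided elastic energy estimates of \cref{lem:conv-cl-2,lem:en-est-dd} applied on the pieces of $\hat u_\infty$ between successive axis touches in $Z$, in order to produce either an additional interior axis touch contributing to the density at some axis point or, via \cref{eq:br-gr}, a refined elastic bound that directly contradicts $\W(f_{u_n}) < 8\pi$.
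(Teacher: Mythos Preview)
Your rescaling/varifold/Li--Yau route is a genuine alternative and can be completed, but the paper argues quite differently and more elementarily: it never rescales and never invokes varifolds or \cref{prop:Li-Yau} for this proposition. Instead, it reparametrizes $u_n$ by Euclidean arc-length from each endpoint, obtaining $u_n^{[0]},u_n^{[1]}$ on $[0,L_n]$ that subconverge locally on $[0,\infty)$ to curves $u_\infty^{[i]}$, and splits $\E(u_n)$ into a middle piece and two end pieces. The middle piece has diverging hyperbolic length (its Euclidean length is $L_n-2R\to\infty$ while, by \cref{prop:elen-vs-euclen}, the height must blow up), so \cref{rem:eich-grun} gives $\liminf\E\geq 8$ there; each end piece contributes $\geq 4+4(-1)^{i+1}\tau_i^{(2)}$, via \cref{lem:conv-cl-2} if $u_\infty^{[i]}$ never touches the axis, and via \cref{lem:en-est-dd} on $[0,x^*-\eta]$ plus another invocation of \cref{rem:eich-grun} around the first axis touch $x^*$ otherwise. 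Summing and converting with \cref{eq:br-gr} yields $\liminf\W(f_{u_n})\geq 8\pi$, a contradiction --- this is precisely the ``refined elastic bound'' you list as a fallback. One correction to your primary plan: the density-$2$ claim at the origin is \emph{not} delicate and does not depend on $\tau_y$. After rescaling, both endpoints collapse to the axis, and since $\hat u_\infty^{(2)}(0)=\hat u_\infty^{(2)}(1)=0$ while $\E(\hat u_\infty)<\infty$, \cref{prop:vert} applied one-sidedly at $0^+$ and $1^-$ makes $\hat u_\infty$ asymptotically vertical there regardless of the original $\tau_y$; the one-sided area estimate from the proof of \cref{prop:prop2-bd-hyp-len} then gives density $\geq 1$ from each side, hence $\geq 2$ total, with no case distinction. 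What your approach buys is a unified picture with \cref{prop:prop2-bd-hyp-len}; what the paper's buys is that \cref{prop:bd-euc-len} stands on purely one-dimensional elastic estimates, reserving the varifold machinery for the sharper threshold $C_{\mathrm{LY}}^{\mathrm{rot}}$ in the next step.
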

\begin{proof}
    Suppose that the statement fails. That is, using also \cref{eq:prop-dn}, after passing to a subsequence without relabeling, we have w.l.o.g. $L_n\vcentcolon=\Ll_{\R^2}(u_n)\to\infty$ for $n\to\infty$ and, using \cref{eq:prop-en}, $\W(f_{u_n})\leq M<8\pi$. 

    Write $\varphi_n^{[0]}\colon  [0,L_n]\to [0,1]$ for the inverse of the map given by $y\mapsto \int_0^y|\partial_xu_n|\dd x$. Further, $\varphi_n^{[1]}\colon[0,L_n]\to[0,1]$ is given by the inverse of $y\mapsto \int_y^1 |\partial_xu_n|\dd x$. Then define $u_n^{[i]} = u_n\circ\varphi_n^{[i]}$ for $i=0,1$ and $n\in\N$.

    Using the boundary conditions in \cref{eq:prop-dn}, we have 
    \begin{equation}
        u_n^{[0]}(0)=u_n(0)=p_0,\ u_n^{[1]}(0)=u_n(1)=p_1\quad\text{for all $n\in\N$}.
    \end{equation}
    For $R\in\N$, one has $[0,R]\subseteq[0,L_n]$ for $n\geq N(R)$. By the above, for each $R\in\N$, $(u_n^{[i]}|_{[0,R]})_{n\geq N(R)}$ is uniformly bounded in $W^{1,\infty}([0,R])$. 

    By choosing a suitable diagonal sequence, one obtains that, after passing to subsequence without relabeling, there exists $u_{\infty}^{[i]}\colon[0,\infty)\to \R\times[0,\infty)$ such that, for all $R\in\N$,
    \begin{equation}
        u_n^{[i]}|_{[0,R]} \rightharpoonup^* u_{\infty}^{[i]}|_{[0,R]}\text{ in $W^{1,\infty}([0,R])$ and }u_n^{[i]}|_{[0,R]} \to u_{\infty}^{[i]}|_{[0,R]}\text{ uniformly}.
    \end{equation}
    Further, by \cref{eq:br-gr}, \cref{eq:prop-en,eq:prop-dn}, 
    \begin{equation}\label{eq:diam-0}
        \E(u_n) = \frac{2}{\pi}\W(f_{u_n}) + 4{\tau_y^{(2)}}\Big|_0^1 \leq \frac{2M}{\pi} + 8 < 24.
    \end{equation}
    Using \cref{lem:compactness} and \cref{thm:ch-ve}, writing $Z_i=\{(u_{\infty}^{[i]})^{(2)}=0\}$, it follows that $Z_i$ consists of at most two points and, for any compact interval $K\subseteq [0,\infty)\setminus Z_i$, one has 
    \begin{equation}
        u_n^{[i]}|_K \rightharpoonup u_{\infty}^{[i]}|_K\quad\text{in $W^{2,2}(K)$}.
    \end{equation}
    Consider only $R>0$ with $(Z_1\cup Z_2)\cap (R,\infty)=\emptyset$. If $n\in\N$ is sufficiently large such that $L_n>2R$, then $\varphi_n^{[0]}(R)<\varphi_n^{[1]}(R)$. Indeed, if $\varphi_n^{[0]}(R)\geq \varphi_n^{[1]}(R)$, then
    \begin{equation}
        L_n=\Ll_{\R^2}(u_n) \leq \Ll_{\R^2}(u_n|_{[0,\varphi_n^{[0]}(R)]}) + \Ll_{\R^2}(u_n|_{[\varphi_n^{[1]}(R),1]}) = 2R,
    \end{equation}
    a contradiction. Writing $\alpha_n(R)=\varphi_n^{[0]}(R)$ and $\beta_n(R)=\varphi_n^{[1]}(R)$, we further obtain that $\lim_{n\to\infty}\Ll_{\H^2}(u_n|_{[\alpha_n(R),\beta_n(R)]})=\infty$. Indeed, since $(Z_1\cup Z_2)\cap (R,\infty)=\emptyset$, one has $u_n(\alpha_n(R))\to u_{\infty}^{[0]}(R)\in\H^2$, $u_n(\beta_n(R))\to u_{\infty}^{[1]}(R)\in\H^2$. \\Moreover, $\Ll_{\R^2}(u_n|_{[\alpha_n(R),\beta_n(R)]})=L_n-2R\to\infty$ so that, using \cref{prop:elen-vs-euclen} and $\E(u_n)<24$ by \cref{eq:diam-0}, $\lim_{n\to\infty}\max_{[\alpha_n(R),\beta_n(R)]}u_n^{(2)}=\infty$. \\Combined, this gives $\lim_{n\to\infty}\Ll_{\H^2}(u_n|_{[\alpha_n(R),\beta_n(R)]})=\infty$. Therefore, \cref{rem:eich-grun} yields 
    \begin{equation}\label{eq:diam-1}
        \liminf_{n\to\infty}\E(u_n|_{[\alpha_n(R),\beta_n(R)]}) \geq 8.
    \end{equation}
    \textbf{Case 1:} $Z_i\neq\emptyset$. Denote by $x^*$ the smallest element in $Z_i$. Let $\eta>0$. Since $u_n^{[i]}(x^*\pm\eta) \to u_{\infty}^{[i]}(x^*\pm\eta)\in\H^2$ and as $(u_n^{[i]})^{(2)}(x^*)\to 0$, we have $\lim_{n\to\infty}\Ll_{\H^2}(u_n^{[i]}|_{[x^*-\eta,x^*+\eta]})=\infty$, so that, by \cref{rem:eich-grun}, 
    \begin{equation}\label{eq:diam-2}
        \liminf_{n\to\infty}\E(u_n^{[i]}|_{[x^*-\eta,x^*+\eta]}) \geq 8.
    \end{equation}
    Moreover, \cref{lem:en-est-dd} implies
    \begin{equation}
        \E(u_{\infty}^{[i]}|_{[0,x^*-\eta]}) \geq -4 + 4(-1)^{i+1} {\tau_i^{(2)}} + o(1)\quad\text{as $\eta\searrow 0$}.
    \end{equation}  
    Combined with \cref{eq:diam-2}, since $\liminf_{n\to\infty}\E(u_n^{[i]}|_{[0,x^*-\eta]})\geq \E(u_{\infty}^{[i]}|_{[0,x^*-\eta]})$ due to weak convergence in $W^{2,2}$, c.f. \cref{lem:elen-semi-cont}, one obtains
    \begin{equation}\label{eq:sec-ingr-0}
        \liminf_{n\to\infty} \E(u_{n}^{[i]}|_{[0,R]}) \geq 4+4(-1)^{i+1}{\tau_i^{(2)}} .
    \end{equation}
    \textbf{Case 2:} $Z_i=\emptyset$. Then 
    \begin{equation}\label{eq:sec-ingr-1}
        \liminf_{n\to\infty} \E(u_{n}^{[i]}|_{[0,R]}) \geq 4+4(-1)^{i+1}{\tau_i^{(2)}} + o(1)\quad\text{for $R\to\infty$}
    \end{equation}
    simply follows by \cref{lem:conv-cl-2} applied to $u_{\infty}^{[i]}$ and $\liminf_{n\to\infty}\E(u_n^{[i]}|_{[0,R]})\geq \E(u_{\infty}^{[i]}|_{[0,R]})$.

    Altogether, combining \cref{eq:diam-1,eq:sec-ingr-0,eq:sec-ingr-1}, one obtains
    \begin{align}
        \liminf_{n\to\infty}\E(u_n) &\geq \liminf_{n\to\infty}\E(u_n|_{[0,\alpha_n(R)]}) + \liminf_{n\to\infty}\E(u_n|_{[\alpha_n(R),\beta_n(R)]}) + \liminf_{n\to\infty}\E(u_n|_{[\beta_n(R),1]}) \\
        &\geq \liminf_{n\to\infty}\E(u_{n}^{[0]}|_{[0,R]}) + 8 + \liminf_{n\to\infty}\E(u_{n}^{[1]}|_{[0,R]})\\
        &\geq 16 + 4{\tau_y^{(2)}}\Big|_{y=0}^1.
    \end{align}
    That is, using \cref{eq:br-gr}, $8\pi>\liminf_{n\to\infty}\W(f_{u_n})\geq 8\pi$, a contradiction!
\end{proof}

Using \cref{prop:euc-len-diam,lem:compactness}, considering for instance a minimizing sequence in the context of \cref{thm:main-calvar}, after choosing a suitable subsequence, one can pass to a limit (in a suitable sense). As it turns out, this limit does not intersect the rotation axis if the Willmore energies along the sequence are bounded away from the Li-Yau constant $C_{\mathrm{LY}}^{\mathrm{rot}}(p,\tau)$ induced by the Dirichlet boundary conditions. Roughly speaking, this is due to fact that one can always pass to a weak varifold limit which, if it intersects the rotation axis, always does so in a point with density at least two. This of course violates the Li-Yau inequality \cref{prop:Li-Yau}. The rigorous arguments are given in 

\begin{proposition}\label{prop:prop2-bd-hyp-len}
    Consider a family of immersions $u_n\colon[0,1]\to\H^2$ satisfying the boundary conditions in \cref{eq:prop-dn} with
    \begin{equation}\label{eq:prop2-thresh}
        \limsup_{n\to\infty} \W(f_{u_n}) < C_{\mathrm{LY}}^{\mathrm{rot}}(p,\tau).
    \end{equation}
    Then $\sup_{n\in\N}\Ll_{\H^2}(u_n)<\infty$.
\end{proposition}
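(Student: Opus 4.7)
The plan is to argue by contradiction: pass to a subsequence (unrelabelled) so that $\Ll_{\H^2}(u_n) \to \infty$ while still $\W(f_{u_n}) \to W_\infty < C_{\mathrm{LY}}^{\mathrm{rot}}(p,\tau)$. Since $C_{\mathrm{LY}}^{\mathrm{rot}}(p,\tau) \leq 8\pi$ by \cref{rem:cly-vs-8pi}, \cref{prop:bd-euc-len} gives a uniform bound $L_n \vcentcolon= \Ll_{\R^2}(u_n) \leq L$, and \cref{eq:br-gr} then bounds $\E(u_n)$ uniformly. After reparametrizing by constant Euclidean speed and disposing of the degenerate case $L_n \to 0$ (where $\Ll_{\H^2}(u_n) \leq L_n/p_0^{(2)} \to 0$, immediately contradicting the standing assumption), \cref{lem:compactness} yields a further subsequence converging weak-$*$ in $W^{1,\infty}$ and uniformly to a Lipschitz limit $u_\infty \colon [0,1] \to \R \times [0,\infty)$, with $Z \vcentcolon= \{u_\infty^{(2)} = 0\}$ finite. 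The Dirichlet conditions force $u_\infty(0) = p_0$ and $u_\infty(1) = p_1$ in $\H^2$, so $Z \subset (0,1)$. Moreover $Z \neq \emptyset$, because otherwise $u_n^{(2)} \geq \varepsilon$ on $[0,1]$ for large $n$ by uniform convergence, forcing $\Ll_{\H^2}(u_n) \leq L/\varepsilon$. Fix $x^* \in Z$ and set $q \vcentcolon= u_\infty^{(1)}(x^*)$.

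Next, pass to a varifold limit. The masses $\mu_{V_{f_{u_n}}}(\R^3) = 2\pi \int u_n^{(2)} |\partial_x u_n| \dd x$ are uniformly bounded by the Euclidean length and height estimates, and the Willmore and mass bounds together with Cauchy--Schwarz control $\|\delta V_{f_{u_n}}\|$ uniformly on compacta. Allard's compactness \cref{thm:vari-cpt} therefore delivers, along a subsequence, a $2$-integral varifold $V$ with $V_{f_{u_n}} \rightharpoonup V$. Since every $V_{f_{u_n}}$ shares the same boundary trace $b_p$ and co-normal $\eta_\tau$, a standard argument identifies the singular boundary data $\sigma_V = \sigma_{V_{b_p}}$ with co-normal $\eta_V = \eta_{V_{b_p}}$, so by \cref{rem:p-tau-for-vari} the Li-Yau inequality \cref{prop:Li-Yau} applied to $V$ with $K = \R\times\{0\}\times\{0\}$ involves precisely the threshold $C_{\mathrm{LY}}^{\mathrm{rot}}(p,\tau)$. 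By lower semicontinuity of $\W$ under weak varifold convergence, $\W(V) \leq W_\infty < C_{\mathrm{LY}}^{\mathrm{rot}}(p,\tau)$. It therefore suffices to exhibit $z \in \R\times\{0\}\times\{0\}$ with $\theta^2(\mu_V, z) \geq 2$ to derive the desired contradiction, and the natural candidate is $z = (q,0,0)$.

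The key step is the density bound $\theta^2(\mu_V, (q,0,0)) \geq 2$. Choose $\delta > 0$ with $[x^*-\delta, x^*+\delta] \cap Z = \{x^*\}$. Since $u_\infty$ is Lipschitz with $u_\infty^{(2)}(x^*) = 0$, the estimate $u_\infty^{(2)}(x) \leq L|x-x^*|$ makes $1/u_\infty^{(2)}$ non-integrable near $x^*$, so $\Ll_{\H^2}(u_\infty|_{[x^*-\delta, x^*)}) = \infty$, and $\E(u_\infty|_{[x^*-\delta, x^*)}) < \infty$ by weak $W^{2,2}_{\loc}$-convergence and lower semicontinuity. Then \cref{prop:vert} combined with the sign-selection argument in the proof of \cref{lem:en-est-dd} (using $u_\infty^{(2)} \to 0$) forces
\[
    \lim_{x \nearrow x^*} \frac{\partial_x u_\infty^{(2)}(x)}{|\partial_x u_\infty(x)|} = -1, \qquad \lim_{x \searrow x^*} \frac{\partial_x u_\infty^{(2)}(x)}{|\partial_x u_\infty(x)|} = +1.
\]
The $W^{2,2}_{\loc}$-convergence from \cref{lem:compactness} and Sobolev embedding lift this verticality to $u_n$ on every compact subinterval of $[x^*-\delta, x^*+\delta] \setminus \{x^*\}$ for $n$ large. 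For small $r>0$ and large $n$, the preimage $S_n(r) = \{x : |u_n(x)-(q,0)| \leq r\}$ is then contained in a small neighbourhood of $x^*$ (by uniform convergence and finiteness of $Z$) and splits into two arcs on which $u_n^{(2)}$ is strictly monotone; using $|\partial_x u_n| = (1+o(1)) |\partial_x u_n^{(2)}|$ and changing variables $t = u_n^{(2)}(x)$ on each arc,
\[
    \mu_{V_{f_{u_n}}}(B_r(q,0,0)) \geq (2-o(1)) \cdot 2\pi \int_{\min_{S_n(r)} u_n^{(2)}}^{(1-o(1))r} t \dd t = (1 - o(1)) \cdot 2\pi r^2
\]
as $n \to \infty$, uniformly in small $r$. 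Passing to the varifold limit and then $r \to 0$ yields $\theta^2(\mu_V, (q,0,0)) \geq 2$, which together with $\W(V) < C_{\mathrm{LY}}^{\mathrm{rot}}(p,\tau)$ contradicts \cref{prop:Li-Yau}.

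The hard part is the density computation in the last paragraph. The $W^{2,2}_{\loc}$-convergence of the profile curves degenerates exactly at $x^*$, so transferring the verticality from $u_\infty$ to the $u_n$ in a neighbourhood of $x^*$, and organising the two monotone branches of $u_n^{(2)}$ on $S_n(r)$ in a way that survives the change of variables, requires combining \cref{lem:compactness} with the elastic-energy lower bounds of \cref{lem:conv-cl-2,lem:en-est-dd} to preclude any intermediate oscillation of the profile near the pinch point.
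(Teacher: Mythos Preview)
Your overall strategy matches the paper exactly: contradiction, Euclidean-length bound via \cref{prop:bd-euc-len} (using $C_{\mathrm{LY}}^{\mathrm{rot}}(p,\tau)\leq 8\pi$), constant-speed reparametrisation, compactness via \cref{lem:compactness}, varifold limit via \cref{thm:vari-cpt}, stability of the boundary data, lower semicontinuity of $\W$, and finally a density-$2$ point on the rotation axis contradicting \cref{prop:Li-Yau}. The only substantive divergence is how you establish $\theta^2(\mu_V,(q,0,0))\geq 2$, and there your argument has a gap.

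You correctly deduce $\lim_{x\to x^*}\partial_x u_\infty^{(2)}/|\partial_x u_\infty|=\mp 1$ from \cref{prop:vert} and note this transfers to $u_n$ in $C^1$ on compact subsets of $[x^*-\delta,x^*+\delta]\setminus\{x^*\}$. But the assertion that $S_n(r)$ ``splits into two arcs on which $u_n^{(2)}$ is strictly monotone'' is unjustified: the $C^1$-convergence degenerates at $x^*$, so you have no uniform-in-$n$ control on $\partial_x u_n^{(2)}$ on a fixed neighbourhood of $x^*$. Your last paragraph acknowledges this and proposes a fix via \cref{lem:conv-cl-2,lem:en-est-dd}, but those lemmata give lower bounds on $\E$ in terms of a single boundary tangent and do not obviously preclude oscillation of $u_n$ near $x^*$. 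The gap can in principle be closed by an order-of-limits argument (choose $\eta'=\eta'(r)$ with $u_\infty^{(2)}(x^*\pm\eta')\ll r$, then take $n$ large depending on $\eta'$), but this is not what you wrote.

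The paper avoids all of this by computing the area on the \emph{limit} curve. Having noted $|\partial_x\widetilde{u_\infty}|\equiv L$ on $[0,1]\setminus Z$ and $\partial_x\widetilde{u_\infty}^{(1)}\to 0$ at $x^*$, one chooses $a_\rho<x^*<b_\rho$ with $|\widetilde{u_\infty}-\widetilde{u_\infty}(x^*)|<\rho$ on $(a_\rho,b_\rho)$ and equality at the endpoints, so that $\widetilde{u_\infty}^{(2)}(a_\rho),\widetilde{u_\infty}^{(2)}(b_\rho)\geq\sqrt{1-\varepsilon^2}\,\rho$. The key step is the pointwise bound $L\geq |\partial_x\widetilde{u_\infty}^{(2)}|\geq -\partial_x\widetilde{u_\infty}^{(2)}$, which gives directly
\[
\int_{a_\rho}^{x^*}\widetilde{u_\infty}^{(2)}L\,dx\;\geq\;-\int_{a_\rho}^{x^*}\widetilde{u_\infty}^{(2)}\partial_x\widetilde{u_\infty}^{(2)}\,dx\;=\;\tfrac12\,\widetilde{u_\infty}^{(2)}(a_\rho)^2\;\geq\;\tfrac12(1-\varepsilon^2)\rho^2,
\]
with no monotonicity hypothesis whatsoever; likewise on $[x^*,b_\rho]$. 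The transfer back to $\widetilde{u_n}$ then uses only the \emph{uniform} convergence $\widetilde{u_n}\to\widetilde{u_\infty}$ and $L_n\to L$, so the degeneracy of the $C^1$-convergence at $x^*$ is never an issue. This is both simpler and more robust than your change-of-variables on $u_n$.
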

\begin{proof}
    Suppose that the claim fails. W.l.o.g., after passing to a subsequence without relabeling, we then have 
    \begin{equation}\label{eq:prop2-2}
        \lim_{n\to\infty}\Ll_{\H^2}(u_n) = \infty\quad\text{and}\quad \sup_{n\in\N}\W(f_{u_n}) < C_{\mathrm{LY}}^{\mathrm{rot}}(p,\tau).
    \end{equation}    
    Using \cref{prop:bd-euc-len,rem:cly-vs-8pi}, one obtains $\Ll_{\R^2}(u_n)\leq L$ for all $n\in\N$. Thus, consider the reparametrizations $\widetilde{u_n}\colon[0,1]\to\H^2$ of $u_n$ by constant Euclidean speed. As in \cite[p.9]{schlierf2024}, one argues that $\inf_{n\in\N}\Ll_{\H^2}(u_n)>0$ and then, using the boundary conditions, one obtains $\Ll_{\R^2}(u_n)\geq\ell>0$ similarly as in \cite[Remark 3.2]{schlierf2023}.

    By \cref{lem:compactness}, after passing to a subsequence, we have that $\widetilde{u_n}\to \widetilde{u_{\infty}}$ uniformly and weakly* in $W^{1,\infty}([0,1])$. Moreover, using \cref{thm:ch-ve} and $\limsup_{n\to\infty}\E(\widetilde{u_n})<24$, cf. \cref{eq:diam-0}, $Z=\{\widetilde{u_{\infty}}^{(2)}=0\}$ consists of at most two elements. Again by \cref{lem:compactness}, $\widetilde{u_n}\rightharpoonup\widetilde{u_{\infty}}$ in $W^{2,2}$ (and thus strong convergence in $C^1$) on compact intervals $J$ in $[0,1]\setminus Z$. Particularly, writing $|\partial_x\widetilde{u_n}|\equiv L_n$, we obtain $|\partial_x\widetilde{u_{\infty}}|\equiv L$ on $[0,1]\setminus Z$ where $L=\lim_{n\to\infty}L_n$ due to the $C^1$ convergence locally in $[0,1]\setminus Z$.

    Now considering the varifolds $V_n=V_{f_{\widetilde{u_n}}}$ induced by the immersions $f_{\widetilde{u_n}}$, we have after passing to a further subsequence without relabeling $V_n\rightharpoonup V_{\infty}$ for $n\to\infty$ weakly as varifolds, using \cref{thm:vari-cpt}. Due to lower semi-continuity of $\W$ with respect to weak varifold convergence and \cref{eq:prop2-thresh},
    \begin{equation}
        \W(V_{\infty}) \leq \liminf_{n\to\infty}\W(V_n) =  \liminf_{n\to\infty}\W(f_{\widetilde{u_n}}) < C_{\mathrm{LY}}^{\mathrm{rot}}(p,\tau).
    \end{equation}
    As in \cite[pp. 539 -- 540]{novagapozzetta2020}, one argues that the constant boundary data $\sigma_{V_n}=\sigma_{V_{1}}$ and $\eta_{V_n}=\eta_{V_1}$ for all $n\in\N$ is stable with respect to weak varifold convergence. Therefore, using \cref{rem:p-tau-for-vari}, \cref{prop:Li-Yau} applied to $V_{\infty}$ yields in particular that
    \begin{equation}\label{eq:pf-main-2}
        \theta^2(\mu_{V_{\infty}},z)<2\quad\text{for all $z\in\R\times\{0\}\times\{0\}$}.
    \end{equation}    
    Suppose that $Z\neq\emptyset$, that is, there exists $x^*\in Z$ with $\widetilde{u_{\infty}}^{(2)}(x^*)=0$. Writing $z^*=(\widetilde{u_{\infty}}(x^*),0)\in\R\times\{0\}\times\{0\}$, we claim that the density of $\mu_{V_{\infty}}$ at $z^*$ is at least $2$.

    To show this, we proceed as follows. For $\rho>0$, fix $a_{\rho}<x^*<b_{\rho}$ such that 
    \begin{equation}\label{eq:arho-brho-xstar}
        |\widetilde{u_{\infty}}(a_{\rho})-\widetilde{u_{\infty}}(x^*)|=|\widetilde{u_{\infty}}(b_{\rho})-\widetilde{u_{\infty}}(x^*)|=\rho\quad\text{and}\quad |\widetilde{u_{\infty}}-\widetilde{u_{\infty}}(x^*)| < \rho\text{ on $(a_{\rho},b_{\rho})$}.
    \end{equation}
    Using \cref{prop:vert} and $|\partial_x\widetilde{u_{\infty}}|\equiv L$ on $[0,1]\setminus Z$ for some $L>0$, it follows that $\lim_{x\to x^*}\partial_x\widetilde{u_{\infty}}^{(1)}(x)=0$.
    
    \textbf{Claim:} One has $\widetilde{u_{\infty}}^{(1)}(a_{\rho})-\widetilde{u_{\infty}}^{(1)}(x^*)=o(\rho)$ and $\widetilde{u_{\infty}}^{(1)}(b_{\rho})-\widetilde{u_{\infty}}^{(1)}(x^*)=o(\rho)$.

    Indeed, fix any $\varepsilon>0$. Then there exists $\eta>0$ such that, for all $x\in (x^*-\eta,x^*+\eta)$ with $x\neq x^*$, $|\partial_x\widetilde{u_{\infty}}^{(1)}(x)|<\varepsilon$. 

    Observe that, using \cref{prop:euc-len-diam,eq:diam-0}
    \begin{equation}\label{eq:pf-mr-1}
        |a_{\rho}-x^*| = \frac{1}{L}\int_{a_{\rho}}^{x^*}|\partial_x\widetilde{u_{\infty}}|\dd x \leq \frac{1}{L} \Ll_{\R^2}(\widetilde{u_{\infty}}|_{(a_{\rho},x^*)}) \leq \frac{C}{L} \rho.
    \end{equation}
    Similarly, also $|b_{\rho}-x^*|\leq \frac{C}{L} \rho$. Thus, $x^*-\eta<a_{\rho}<x^*<b_{\rho}<x^*+\eta$ for $\rho>0$ sufficiently small. For such $\rho$,  
    \begin{align}
        \frac{1}{\rho} |\widetilde{u_{\infty}}^{(1)}(a_{\rho})-\widetilde{u_{\infty}}^{(1)}(x^*)|\leq \frac{1}{\rho} \int_{a_{\rho}}^{x^*}|\partial_x\widetilde{u_{\infty}}^{(1)}|\dd x\leq \frac{x^*-a_{\rho}}{\rho} \varepsilon\leq \frac{C}{L} \varepsilon,
    \end{align} 
    using \cref{eq:pf-mr-1}. One proceeds similarly for $b_{\rho}$. This shows the above claim.

    Let now $\varepsilon>0$. By the claim, there exists $\rho_0(\varepsilon)>0$ sufficiently small such that, for all $0<\rho<\rho_0(\varepsilon)$, 
    \begin{equation}
        \frac{1}{\rho} |\widetilde{u_{\infty}}^{(1)}(a_{\rho})-\widetilde{u_{\infty}}^{(1)}(x^*)|, \frac{1}{\rho} |\widetilde{u_{\infty}}^{(1)}(b_{\rho})-\widetilde{u_{\infty}}^{(1)}(x^*)| < \varepsilon.
    \end{equation}
    Since $\rho^2=(\widetilde{u_{\infty}}^{(1)}(y)-\widetilde{u_{\infty}}^{(1)}(x^*))^2+\widetilde{u_{\infty}}^{(2)}(y)^2$ for $y\in\{a_{\rho},b_{\rho}\}$, we obtain
    \begin{equation}
        \widetilde{u_{\infty}}^{(2)}(y) \geq \sqrt{1-\varepsilon^2} \rho\quad \text{for $y\in\{a_{\rho},b_{\rho}\}$}.
    \end{equation}
    This yields 
    \begin{equation}
        \int_{a_{\rho}}^{x^*}\widetilde{u_{\infty}}^{(2)} L \dd x\geq -\int_{a^{\rho}}^{x^*}\widetilde{u_{\infty}}^{(2)} \partial_x\widetilde{u_{\infty}}^{(2)}\dd x \geq \int_0^{\sqrt{1-\varepsilon^2} \rho} h\dd h = \frac12 (1-\varepsilon^2)\rho^2.
    \end{equation}
    One proceeds similarly on $[x^*,b_{\rho}]$ so that altogether, using the uniform convergence $\widetilde{u_n}\to\widetilde{u_{\infty}}$ and $|\partial_x\widetilde{u_n}|\equiv L_n\to L$,
    \begin{equation}
        \lim_{n\to\infty} \int_{a_{\rho}}^{b_{\rho}} \widetilde{u_n}^{(2)}|\partial_x\widetilde{u_n}|\dd x = \int_{a_{\rho}}^{b_{\rho}}\widetilde{u_{\infty}}^{(2)} L \dd x \geq (1-\varepsilon^2)\rho^2.
    \end{equation}
    By \cref{eq:arho-brho-xstar}, $f_{\widetilde{u_n}}((a_{\rho},b_{\rho})\times \S^1)\subseteq B_{\rho}(z^*)$, so that the above results in
    \begin{equation}\label{eq:pf-main-1}
        \limsup_{n\to\infty}\frac{\mu_{V_n}(B_{\rho}(z^*))}{\pi\rho^2} \geq  \limsup_{n\to\infty} \frac{1}{\pi\rho^2} 2\pi\int_{a_{\rho}}^{b_{\rho}} \widetilde{u_n}^{(2)}|\partial_x\widetilde{u_n}|\dd x \geq 2(1-\varepsilon^2),
    \end{equation}
    for $0<\rho<\rho_0(\varepsilon)$. By varifold convergence $V_n\rightharpoonup V_{\infty}$ one especially obtains that $\mu_{V_n}\rightharpoonup^* \mu_{V_{\infty}}$ as Radon measures so that, using \cref{eq:pf-main-1},
    \begin{equation}
        (1+\varepsilon)^2\theta^2(\mu_{V_{\infty}},z^*)=\lim_{\rho\searrow 0}\frac{\mu_{V_{\infty}}(B_{(1+\varepsilon)\rho}(z^*))}{\pi \rho^2} \geq \limsup_{\rho\searrow 0}\limsup_{n\to\infty} \frac{\mu_{V_n}(B_{\rho}(z^*))}{\pi\rho^2} \geq 2(1-\varepsilon^2),
    \end{equation}
   contradicting \cref{eq:pf-main-2} for sufficiently small $\varepsilon>0$.

   We obtain $Z=\emptyset$. Therefore, as $\widetilde{u_{\infty}}$ is continuous, there is $\varepsilon>0$ with $2\varepsilon<\widetilde{u_{\infty}}^{(2)} <\frac{1}{2\varepsilon}$. The uniform convergence yields $\varepsilon<\widetilde{u_n}<\frac{1}{\varepsilon}$ for $n$ sufficiently large such that
   \begin{align}
    \Ll_{\H^2}(u_n) &= \Ll_{\H^2}(\widetilde{u_n}) = \int_I \frac{|\partial_x\widetilde{u_n}|}{\widetilde{u_n}^{(2)}} \dd x\leq \frac{L}{\varepsilon}.
   \end{align}
   This contradicts \cref{eq:prop2-2}!
\end{proof}
\begin{proof}[Proof of \cref{thm:main-calvar}.]
    Consider a minimizing sequence $(u_n)_{n\in\N}$, i.e. each $u_n$ satisfies \cref{eq:bdry-cdts} and $\W(f_{u_n}) \to M_{p,\tau}$. Since $M_{p,\tau}<C_{\mathrm{LY}}^{\mathrm{rot}}(p,\tau)$, we may w.l.o.g. suppose that
    \begin{equation}
        \sup_{n\in\N} \W(f_{u_n}) < C_{\mathrm{LY}}^{\mathrm{rot}}(p,\tau).
    \end{equation}
    Using \cref{prop:prop2-bd-hyp-len} and \cref{eq:bdry-cdts}, one finds that $\Ll_{\H^2}(u_n)\leq C$. As in \cite[Theorem 3.1]{eichmanngrunau2019}, one concludes that there exists a minimizing immersion $u\in W^{2,2}([0,1],\H^2)$ satisfying \cref{eq:bdry-cdts} which is parametrized by constant speed in $\H^2$.

    Therefore, for any $\varphi\in C_c^{\infty}((0,1),\R^2)$,
    \begin{equation}
        \frac{\dd}{\dd t}\E({u} + t\varphi) \Big|_{t=0} = 0.
    \end{equation}
    As in \cite[Section 5]{eichmanngrunau2019}, one concludes ${u}\in C^{\infty}([0,1])$ and ${u}$ satisfies the point-wise Euler-Lagrange equation
    \begin{equation}\label{eq:main-calvar-pf}
        \nabla\E({u}) = 0 \quad\text{in $[0,1]$}.
    \end{equation}
    Using \cite[Theorem 4.1]{dallacquaspener2018} and \cref{eq:main-calvar-pf}, $f_u$ is a Willmore surface of revolution.
\end{proof}

\begin{proof}[Proof of \cref{thm:main}.]
   Let $u\colon[0,T)\times[0,1]\to\H^2$ be a maximal solution to \cref{eq:wf-eq}. Using \cref{eq:l2-grad-will,eq:dir-will-flow}, one finds
   \begin{equation}
    \frac{\dd}{\dd t}\W(f_{u(t)}) = -\int_C |\partial_tf_u|^2\dd\mu_{f_u} = -\int_C |\nabla\W(f_u)|^2\dd\mu_{f_u}  \leq 0.
   \end{equation}
   Therefore, either the flow is stationary, i.e. $f_{u_0}$ is a Willmore surface and the flow satisfies $T=\infty$ and $f_{u(t)}=f_{u_0}$ for all $t\in[0,\infty)$, or $\frac{\dd}{\dd t}\W(f_{u(t)})\big|_{t=0}<0$. 
   
   For this reason, we may w.l.o.g. suppose $\sup_{t\in[\delta,T)}\W(f_{u(t)})<\W(f_{u_0})\leq C_{\mathrm{LY}}^{\mathrm{rot}}(p,\tau)$ for some $\delta\in (0,T)$. Especially, \cref{prop:prop2-bd-hyp-len} applies and yields that $\Ll_{\H^2}(u(t))$ is uniformly bounded in $t\in[\delta,T)$. As $u\in C^1([0,\delta]\times[0,1])$, we even get $\sup_{t\in[0,T)}\Ll_{\H^2}(u(t))<\infty$. Thus, the claim is proved, using \cite[Theorems 3.14 and 4.5]{schlierf2024}. 
\end{proof}


\appendix

\section{Immersions into $\H^2$ with Sobolev regularity}

The above definitions of the curvature, tangent vector and elastic energy also make sense for immersions $u\in W^{2,2}(I,\H^2)$ where $I\subseteq\R$ is a compact interval. In this article, $W^{2,2}(I,\H^2)\vcentcolon= \{v\in W^{2,2}(I,\R^2):v(I)\subseteq\H^2\}$ is to be understood as an open subset of the Sobolev space $W^{2,2}(I,\R^2)\hookrightarrow C^1(I,\R^2)$. 

A curve $u\in W^{2,2}(I,\H^2)$ is called \emph{immersion} if $|\partial_xu|>0$. For such $u$, $\partial_s=1/|\partial_xu|\partial_x$ denotes the (weak) derivative with respect to the arc-length parameter $s$. By the chain-rule for Sobolev functions, 
\begin{equation}\label{eq:ds2u}
    \partial_s^2u = \Bigl(\frac{u^{(2)}}{|\partial_xu|}\Bigr)^2\partial_x^2u - \langle \Bigl(\frac{u^{(2)}}{|\partial_xu|}\Bigr)^2\partial_x^2u,\frac{\partial_xu}{|\partial_xu|}\rangle\frac{\partial_xu}{|\partial_xu|} + u^{(2)}\frac{\partial_xu^{(2)}}{|\partial_xu|}\frac{\partial_xu}{|\partial_xu|} \in L^2(I,\R^2)
\end{equation}
exists and as in \cite[(12)]{dallacquaspener2017}, one computes that
\begin{equation}\label{eq:curv}
    \curv = \nabla_{\partial_su}\partial_su = \partial_s^2u + \frac{1}{u^{(2)}} \begin{pmatrix}
        -2\partial_su^{(1)}\partial_su^{(2)}\\(\partial_su^{(1)})^2-(\partial_su^{(2)})^2
    \end{pmatrix}.
\end{equation}
Moreover, as already computed in \cite[(2.9)]{eichmanngrunau2019},
\begin{equation}\label{eq:elen-w22}
    \E(u) =\int_0^1 \bigl( \partial_x^2u^{(2)}\partial_xu^{(1)}u^{(2)}-\partial_x^2u^{(1)}\partial_xu^{(2)}u^{(2)}+\partial_xu^{(1)}|\partial_xu|^2 \bigr)^2\frac{1}{u^{(2)}|\partial_xu|^5}\dd x.
\end{equation}

\begin{lemma}\label{lem:elen-cont}
    Consider immersions $u_n,u\in W^{2,2}(I,\H^2)$ for $n\in\N$ with $u_n\to u$ in $W^{2,2}(I)$. Then $\E(u_n)\to\E(u)$.
\end{lemma}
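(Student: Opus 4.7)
The proof proposal is to rely on the explicit pointwise formula \cref{eq:elen-w22} together with the Sobolev embedding $W^{2,2}(I,\R^2)\hookrightarrow C^1(I,\R^2)$, which upgrades the $W^{2,2}$-convergence $u_n\to u$ to uniform $C^1$-convergence and leaves only the second derivatives $\partial_x^2u_n\to\partial_x^2u$ converging merely in $L^2(I)$.

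First, I would pin down the uniform positivity estimates that make the integrand in \cref{eq:elen-w22} tame. Since $I$ is compact, $u\in C^1(I,\H^2)$, and $u$ is an immersion, there exist constants $c,c'>0$ with $|\partial_xu|\geq c$ and $u^{(2)}\geq c'$ on $I$. The $C^1$-convergence then gives a rank $N_0$ such that for all $n\geq N_0$,
\begin{equation}
    |\partial_xu_n|\geq c/2,\qquad u_n^{(2)}\geq c'/2\quad\text{on $I$.}\notag
\end{equation}
In particular the factor $\tfrac{1}{u_n^{(2)}|\partial_xu_n|^5}$ is uniformly bounded on $I$ and converges uniformly to $\tfrac{1}{u^{(2)}|\partial_xu|^5}$.

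Next I would write the bracketed expression in \cref{eq:elen-w22} as
\begin{equation}
    F(u)\defeq \partial_x^2u^{(2)}\partial_xu^{(1)}u^{(2)} - \partial_x^2u^{(1)}\partial_xu^{(2)}u^{(2)} + \partial_xu^{(1)}|\partial_xu|^2,\notag
\end{equation}
which is linear in $\partial_x^2u$ with coefficients that are products of $C^0(I)$-functions converging uniformly along the sequence. Therefore $F(u_n)\to F(u)$ in $L^2(I)$: the differences of the second-derivative coefficients times a uniform bound on $\|\partial_x^2u\|_{L^2}$ go to zero, and the coefficient differences times $\|\partial_x^2u_n\|_{L^2}$ go to zero too, with the zeroth-order term converging uniformly.

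Finally, combining these two facts yields $\E(u_n)\to\E(u)$: writing $a_n=\tfrac{1}{u_n^{(2)}|\partial_xu_n|^5}$ and $a=\tfrac{1}{u^{(2)}|\partial_xu|^5}$,
\begin{align}
    |\E(u_n)-\E(u)| &\leq \|a_n-a\|_{L^\infty(I)}\|F(u_n)\|_{L^2(I)}^2\\
    &\quad + \|a\|_{L^\infty(I)}\,\|F(u_n)-F(u)\|_{L^2(I)}\,\|F(u_n)+F(u)\|_{L^2(I)},\notag
\end{align}
and both terms tend to zero by the uniform $C^0$-convergence of $a_n$ and the $L^2$-convergence of $F(u_n)$. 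No step is genuinely delicate; the only point requiring care is the uniform lower bound $u_n^{(2)}\geq c'/2$, which rests precisely on the openness of the condition $u(I)\subseteq \H^2$ inside $W^{2,2}(I,\R^2)$ together with the Sobolev embedding into $C^1$.
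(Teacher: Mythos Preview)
Your proof is correct and follows the same approach as the paper, which simply invokes \cref{eq:elen-w22} together with the embedding $W^{2,2}(I)\hookrightarrow C^1(I,\R^2)$; you have merely spelled out the routine details of why uniform $C^1$-convergence of $u_n$ and $L^2$-convergence of $\partial_x^2u_n$ combine to give convergence of the integral.
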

\begin{proof}
    The claim is due to \cref{eq:elen-w22} and the compact embedding $W^{2,2}(I)\hookrightarrow C^1(I,\R^2)$.
\end{proof}

\begin{lemma}\label{lem:elen-semi-cont}
    For immersions $u_n,u\in W^{2,2}(I,\H^2)$ for $n\in\N$ with $u_n\rightharpoonup u$ in $W^{2,2}(I)$, one has
    \begin{equation}
        \E(u)\leq\liminf_{n\to\infty}\E(u_n).
    \end{equation}
\end{lemma}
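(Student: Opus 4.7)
The plan is to reduce the statement to weak $L^2$-convergence of the curvature vectors $\curv_n$ and then invoke standard lower semi-continuity for a weighted $L^2$-norm. First I would use the compact embedding $W^{2,2}(I) \hookrightarrow C^1(I, \R^2)$ to promote the weak convergence $u_n \rightharpoonup u$ in $W^{2,2}$ to strong convergence $u_n \to u$ in $C^1(I,\R^2)$. Since $u$ is an immersion on the compact interval $I$, one has $\min_I |\partial_x u| > 0$ and $\min_I u^{(2)} > 0$, and $C^1$-convergence transfers uniform positive lower (and upper) bounds to $|\partial_x u_n|$ and $u_n^{(2)}$ for $n$ sufficiently large.

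Next, the formula \cref{eq:ds2u} expresses $\partial_s^2 u_n = A(u_n, \partial_x u_n)\, \partial_x^2 u_n + D(u_n,\partial_x u_n)$, where both $A$ (matrix-valued) and $D$ (the last term, which depends only on $u$ and $\partial_x u$) are continuous in their arguments as long as $u^{(2)}, |\partial_x u| > 0$. By the uniform positive lower bounds just obtained, the coefficients $A(u_n, \partial_x u_n)$ and $D(u_n,\partial_x u_n)$ converge uniformly on $I$ to their limiting analogues. Since $\partial_x^2 u_n \rightharpoonup \partial_x^2 u$ weakly in $L^2(I, \R^2)$ and the product of a uniformly convergent bounded coefficient with a weakly $L^2$-convergent sequence is weakly $L^2$-convergent, it follows that $\partial_s^2 u_n \rightharpoonup \partial_s^2 u$ weakly in $L^2(I,\R^2)$. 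Adding the nonlinear correction from \cref{eq:curv}, which is a uniformly convergent continuous function of $(u_n, \partial_s u_n)$, I conclude
\begin{equation*}
    \curv_n \rightharpoonup \curv \quad\text{weakly in $L^2(I, \R^2)$.}
\end{equation*}

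Finally, using $|\curv|_g^2 = |\curv|^2/(u^{(2)})^2$ and $ds = (|\partial_x u|/u^{(2)})\dd x$, I would write
\begin{equation*}
    \E(u_n) = \int_I w_n\, |\curv_n|^2 \dd x, \qquad w_n \vcentcolon= \frac{|\partial_x u_n|}{(u_n^{(2)})^3},
\end{equation*}
where $w_n \to w = |\partial_x u|/(u^{(2)})^3$ uniformly on $I$, with uniform positive lower and upper bounds. Setting $\xi_n = \sqrt{w_n}\,\curv_n$, the uniform convergence $\sqrt{w_n} \to \sqrt{w}$ paired with the weak $L^2$-convergence of $\curv_n$ yields $\xi_n \rightharpoonup \sqrt{w}\,\curv$ weakly in $L^2(I, \R^2)$. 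The claim then follows from the lower semi-continuity of the $L^2$-norm under weak convergence:
\begin{equation*}
    \E(u) = \int_I |\sqrt{w}\,\curv|^2 \dd x \leq \liminf_{n\to\infty} \int_I |\xi_n|^2 \dd x = \liminf_{n\to\infty} \E(u_n).
\end{equation*}
The only delicate point is the passage to weak $L^2$-convergence of $\curv_n$, which hinges on the uniform non-degeneracy $|\partial_x u_n|, u_n^{(2)} \geq c > 0$ inherited from the $C^1$-convergence; beyond this, the argument is a routine combination of compact embeddings and lower semi-continuity of a weighted $L^2$-norm.
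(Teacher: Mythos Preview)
Your argument is correct and follows essentially the same route as the paper: promote $W^{2,2}$-weak to $C^1$-strong convergence via the compact embedding, observe that the integrand of $\E$ is linear in $\partial_x^2u$ with $C^1$-coefficients so that the relevant expression converges weakly in $L^2$, and conclude by weak lower semi-continuity of the $L^2$-norm. The paper simply invokes the explicit formula \cref{eq:elen-w22} directly rather than decomposing $\curv$ via \cref{eq:ds2u} and \cref{eq:curv}, but the mechanism is identical.
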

\begin{proof}
    After passing to a subsequence, w.l.o.g. $u_n\to u$ in $C^1(I)$ and $\lim_{n\to\infty}\E(u_n)=\liminf_{n\to\infty}\E(u_n)$. It is a well-known fact that the product of a uniformly convergent sequence and a sequence that weakly converges in $L^2$ is again weakly convergent in $L^2$ to the product of the respective limits. Thus, the claim is a consequence of \cref{eq:elen-w22} and the weak lower semi-continuity of $\|\cdot\|_{L^2(I)}$.
\end{proof}


\section{Energy estimates}

We use the following two technical results from \cite{schlierf2023}.

\begin{proposition}\label{prop:elen-vs-euclen}
    Consider immersions $u_n\colon I\to\H^2$ with $u_n\in W^{2,2}(I)$ defined on a compact interval $I\subseteq\R$ such that $\|u_n^{(2)}\|_{L^{\infty}(I)}\to 0$ and $\Ll_{\R^2}(u_n)\geq \ell>0$. Then $\limsup_{n\to\infty}\E(u_n)=\infty$.
\end{proposition}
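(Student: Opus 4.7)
The plan is to show $\E(u_n)\to\infty$, which immediately yields $\limsup_n \E(u_n)=\infty$. Since $\E$, $\|u^{(2)}\|_{L^{\infty}}$ and $\Ll_{\R^2}$ are invariant under reparametrization, I would first reparametrize each $u_n$ by Euclidean arc length so that $|\partial_\sigma u_n|\equiv 1$, and via mollification (as in the proof of \cref{lem:vert}) reduce to the smooth case. Writing $u_n=(v_n,w_n)$, let $\theta_n$ be the tangent angle (so $v_{n,\sigma}=\cos\theta_n$, $w_{n,\sigma}=\sin\theta_n$) and $\kappa_n=\partial_\sigma\theta_n$ the signed Euclidean curvature, and set $\epsilon_n\vcentcolon=\|u_n^{(2)}\|_{L^{\infty}}$ and $L_n\vcentcolon=\Ll_{\R^2}(u_n)\geq\ell$.

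The key identity, obtained by simplifying \cref{eq:elen-w22} in these coordinates, is
\begin{equation*}
\E(u_n)=\int_{I_n}\frac{(w_n\kappa_n+v_{n,\sigma})^2}{w_n}\,d\sigma=\int_{I_n} w_n\kappa_n^2\,d\sigma+2[\sin\theta_n]_{\partial I_n}+\int_{I_n}\frac{v_{n,\sigma}^2}{w_n}\,d\sigma,
\end{equation*}
where the middle term comes from $\int v_{n,\sigma}\kappa_n\,d\sigma=[\sin\theta_n]_{\partial I_n}$ and satisfies $|2[\sin\theta_n]_{\partial I_n}|\leq 4$. This gives $\E(u_n)+4\geq\int_{I_n}w_n\kappa_n^2\,d\sigma+\int_{I_n}v_{n,\sigma}^2/w_n\,d\sigma$. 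Next I would exploit the unit-speed identity $v_{n,\sigma}^2+w_{n,\sigma}^2=1$. An integration by parts, combined with $|w_{n,\sigma\sigma}|=|\kappa_n\cos\theta_n|\leq|\kappa_n|$ and Cauchy--Schwarz, yields
\begin{equation*}
\int_{I_n}w_{n,\sigma}^2\,d\sigma=[w_nw_{n,\sigma}]_{\partial I_n}-\int_{I_n}w_n w_{n,\sigma\sigma}\,d\sigma\leq 2\epsilon_n+\sqrt{\epsilon_n L_n(\E(u_n)+4)},
\end{equation*}
and since $w_n\leq\epsilon_n$ this gives
\begin{equation*}
\E(u_n)+4\geq\frac{1}{\epsilon_n}\int_{I_n}v_{n,\sigma}^2\,d\sigma\geq\frac{L_n}{\epsilon_n}-2-\sqrt{\frac{L_n(\E(u_n)+4)}{\epsilon_n}}.
\end{equation*}

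The conclusion is then by contradiction: if $\limsup_n \E(u_n)<\infty$, then $\E(u_n)+4\leq M$ along a subsequence, so $M+2+\sqrt{ML_n/\epsilon_n}\geq L_n/\epsilon_n$. Dividing by $L_n/\epsilon_n\geq\ell/\epsilon_n\to\infty$ produces $(M+2)\epsilon_n/L_n+\sqrt{M\epsilon_n/L_n}\geq 1$, and the left-hand side tends to $0$, a contradiction. Hence $\E(u_n)\to\infty$.

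The main obstacle is the algebraic simplification of \cref{eq:elen-w22} to the displayed identity: while elementary, it relies on recognizing the factorization $v_x w_{xx}-v_{xx}w_x=\kappa_n$ (in arc length) and on the fortunate fact that the cross-term $2\int v_{n,\sigma}\kappa_n\,d\sigma$ integrates to a boundary expression bounded by $4$. Everything afterwards is a short chain of elementary inequalities, and the mollification/reparametrization step mirrors the approximation arguments already used elsewhere in the paper.
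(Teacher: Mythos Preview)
Your argument is correct. The identity
\[
\E(u_n)=\int_{I_n}\frac{(w_n\kappa_n+v_{n,\sigma})^2}{w_n}\,d\sigma
\]
follows directly from \cref{eq:elen-w22} in Euclidean arc-length (where $w_{\sigma\sigma}v_\sigma-v_{\sigma\sigma}w_\sigma=\kappa_n$), the cross-term $2\int v_{n,\sigma}\kappa_n=2[\sin\theta_n]_{\partial I_n}$ is indeed bounded by $4$, and your Cauchy--Schwarz estimate $\int w_n|\kappa_n|\leq\sqrt{\epsilon_n L_n}\sqrt{\int w_n\kappa_n^2}$ together with $\int w_n\kappa_n^2\leq\E(u_n)+4$ gives exactly the inequality you state. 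The final contradiction is clean. One small remark: if $\limsup_n\E(u_n)<\infty$ then the bound $\E(u_n)+4\leq M$ holds for \emph{all} large $n$, not merely along a subsequence; your argument actually proves the stronger conclusion $\E(u_n)\to\infty$, consistent with your opening sentence.

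Your route is genuinely different from the paper's. The paper does not argue directly: it quotes \cite[Proposition~2.7]{schlierf2023} for the smooth case and then passes from smooth to $W^{2,2}$ via density and \cref{lem:elen-cont}. Your proof is fully self-contained and more informative --- it isolates precisely why the energy blows up (the term $\int v_{n,\sigma}^2/w_n$ forces $\E(u_n)\gtrsim L_n/\epsilon_n$ once one controls the ``vertical'' contribution $\int w_{n,\sigma}^2$). The paper's approach has the advantage of brevity by outsourcing the work; yours has the advantage of giving an explicit quantitative lower bound and not relying on an external reference.
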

\begin{proof}
    \cite[Proposition 2.7]{schlierf2023} yields the claim for smooth immersions $u_n$. Using \cite{meyersserrin1964} to approximate each $u_n\in W^{2,2}(I)$ suitably and using \cref{lem:elen-cont}, one obtains the claim.
\end{proof}

\begin{theorem}[{\cite[Theorem 2.5]{schlierf2023}}]\label{thm:ch-ve}
    Consider smooth immersions $u_n\colon I\to\H^2$ on a compact interval $I\subseteq\R$ such that $0<\ell\leq |\partial_x u_n|\leq L<\infty$. If $(|\partial_xu_n|)_{n\in\N}$ converges uniformly on $I$,
    \begin{equation}
        u_n\rightharpoonup^*u_{\infty}\text{ in $W^{1,\infty}(I)$}\quad\text{and}\quad u_n\to u_{\infty}\text{ uniformly on $I$}
    \end{equation}
    and
    \begin{equation}\label{eq:ch-ve-1}
        \sup_{n\in\N}\E(u_n) \leq E\cdot 8 + \eta\quad\text{for some $E\in\N_0$ and $\eta<8$},
    \end{equation}
    then $Z=\{u_{\infty}^{(2)}=0\}\cap I^{\circ}$ consists of at most $E$ points.
\end{theorem}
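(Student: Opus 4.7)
The plan is a proof by contradiction. Suppose $Z=\{u_\infty^{(2)}=0\}\cap I^\circ$ contains at least $E+1$ distinct points $x_1^*,\dots,x_{E+1}^*$. Since these points are pairwise distinct and all sit in the interior of $I$, one can pick pairwise disjoint closed subintervals $J_1,\dots,J_{E+1}\subseteq I^\circ$ with $x_i^*$ in the interior of $J_i$. The goal is to establish $\liminf_{n\to\infty}\E(u_n|_{J_i})\geq 8$ for each $i$; by additivity of $\E$ over the disjoint pieces $J_i$, this would yield
\[
    \liminf_{n\to\infty}\E(u_n)\geq\sum_{i=1}^{E+1}\liminf_{n\to\infty}\E(u_n|_{J_i})\geq 8(E+1)>8E+\eta,
\]
contradicting \cref{eq:ch-ve-1}.

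For each fixed $i$, I would argue by a dichotomy on the local behaviour of $u_\infty^{(2)}$ at $x_i^*$. In the \emph{degenerate case}, there is some one-sided closed subinterval $K_i\subseteq J_i$ with $x_i^*$ as an endpoint on which $u_\infty^{(2)}\equiv 0$. Uniform convergence $u_n\to u_\infty$ then forces $\|u_n^{(2)}\|_{L^\infty(K_i)}\to 0$, while the lower bound $|\partial_xu_n|\geq\ell$ yields $\Ll_{\R^2}(u_n|_{K_i})\geq\ell\,|K_i|>0$. Consequently \cref{prop:elen-vs-euclen} already gives $\limsup_{n\to\infty}\E(u_n|_{J_i})=\infty$, which is incompatible with the uniform bound $\E(u_n)\leq 8E+\eta$; here one is done before even summing. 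In the \emph{non-degenerate case}, no such one-sided vanishing subinterval exists, so in every neighbourhood of $x_i^*$ there are points on both sides at which $u_\infty^{(2)}>0$. I would then choose $a_i<x_i^*<b_i$ inside $J_i$ with $u_\infty^{(2)}(a_i),u_\infty^{(2)}(b_i)>0$. Uniform convergence keeps $u_n(a_i)$ and $u_n(b_i)$ inside fixed Euclidean balls contained in $\H^2$ for $n$ large, whereas $u_n^{(2)}(x_i^*)\to 0$. Therefore the hyperbolic distance $d_{\H^2}(u_n(a_i),u_n(x_i^*))$ diverges, so $\Ll_{\H^2}(u_n|_{[a_i,b_i]})\to\infty$, and \cref{rem:eich-grun} applied to $u_n|_{[a_i,b_i]}$ (after an affine reparametrization onto $[0,1]$) gives $\liminf_{n\to\infty}\E(u_n|_{[a_i,b_i]})\geq 8$, hence the required bound on $J_i$.

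The main obstacle is ensuring that the above dichotomy is genuinely exhaustive and that in the non-degenerate case one really produces a two-sided configuration with strictly positive $u_\infty^{(2)}$-values at $a_i$ and $b_i$ — this is what enables invocation of \cref{prop:eich-grun}, whose hypothesis requires both endpoint values of the sub-curve to stay in a bounded subset of $\H^2$. Once this is settled, no Willmore-energy or varifold machinery is needed: the argument depends only on the length/energy estimates from \cref{sec:prelim} together with \cref{prop:elen-vs-euclen}, and the budget $\eta<8$ in \cref{eq:ch-ve-1} caps the count of interior zeros at $E$.
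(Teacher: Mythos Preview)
Your argument is correct. Note, however, that the paper does not supply its own proof of this statement --- it is merely quoted from \cite[Theorem~2.5]{schlierf2023} --- so there is no in-paper proof to compare against. The route you take (contradiction, localisation to disjoint subintervals $J_i$ around $E+1$ putative zeros, and extraction of $\liminf_{n\to\infty}\E(u_n|_{J_i})\geq 8$ via \cref{prop:eich-grun}) is precisely the natural one given the tools assembled in \cref{sec:prelim}, and your dichotomy is exhaustive as stated. One small streamlining: the degenerate case (a nondegenerate subinterval on which $u_\infty^{(2)}\equiv 0$) is in fact ruled out a priori by the uniform energy bound combined with \cref{prop:elen-vs-euclen}, so you could dispense with it upfront and work exclusively in the non-degenerate case.
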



\section{Monotonicity formula and estimates for $C_{\mathrm{LY}}(p,\tau)$}
\label{app:est-cly}

As it turns out, the underlying monotonicity formula in \cite[Appendix B]{novagapozzetta2020} from which \cref{prop:Li-Yau} is concluded gives very useful exact identities for surfaces contained in spheres. Since the integral in \cref{eq:Li-Yau} only depends on the Dirichlet boundary conditions and as the Dirichlet boundary conditions of cylindrical surfaces of revolution are parallel circles with prescribed co-normal field, $C_{\mathrm{LY}}(p,\tau)$ can be estimated using suitable cut-outs of spheres.

First, recall Simon's monotonicity formula in \cite[Appendix B]{novagapozzetta2020} for varifolds with boundary, cf. \cite[(1.2)]{simon1993} and \cite[Appendix A]{kuwertschaetzle2004} for the versions without boundary. Consider a $2$-dimensional integral varifold $V$ in $\R^3$ with generalized mean curvature $\vec{H}$ and $\W(V),\mu_V(\R^3)<\infty$. For any $z\in\R^3$ such that
\begin{equation}
    \int_{B_1(z)} \frac{|\langle x-z,\eta_V(x)\rangle|}{|x-z|^2}\dd\sigma_V(x)<\infty,
\end{equation}
writing $\bot$ for the orthogonal projection onto the orthogonal complement of the approximate tangent space, one has
\begin{equation}\label{eq:mon-form}
    A_z(\sigma) + \int_{B_{\rho}(z)\setminus B_{\sigma}(z)} \Bigl|\frac{\vec{H}(x)}{2} + \frac{(x-z)^{\bot}}{|x-z|^2}\Bigr|^2\dd\mu_V(x) = A_z(\rho)  
\end{equation}
for all $0<\sigma<\rho$ where, for $t>0$,
\begin{align}
    A_z(t) &= \frac{\mu_V(B_t(z))}{t^2} + \frac14\int_{B_t(z)} |\vec{H}|^2\dd\mu_V + \int_{B_t(z)} \frac{\langle\vec{H}(x),x-z\rangle}{t^2}\dd\mu_V(x) \\
    &\quad+ \frac12 \int_{B_{t}(z)} \Bigl(\frac{1}{|x-z|^2}-\frac{1}{t^2}\Bigr)\langle x-z,\eta_V(x)\rangle\dd\sigma_V(x).
\end{align}
For $y=0,1$, consider points $p_y\in\H^2$ and unit tangent vectors $\tau_y\in\S^1$ which aren't vertical, i.e. $\tau_y^{(1)}\neq 0$. The result below remains true in the cases where $\tau_y=\pm(0,1)^t$ is allowed and can be proved similarly by using cut-outs of planes instead of spheres. Recall the energy threshold $T(\tau)=4\pi-2\pi \tau_y^{(2)}\big|_{y=0}^1$ in \cite{schlierf2024}. 
\begin{proposition}\label{prop:battle-of-thresholds}
    One has that
    \begin{equation}
        C_{\mathrm{LY}}(p,\tau) \geq T(\tau).
    \end{equation}
\end{proposition}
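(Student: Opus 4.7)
Writing $I(z)\defeq\int_{\partial C}\frac{\langle\eta_\tau(x),b_p(x)-z\rangle}{|b_p(x)-z|^2}\,d\mu_{b_p^*\langle\cdot,\cdot\rangle}(x)$ for the integral appearing in the definition of $C_{\mathrm{LY}}(p,\tau)$, the plan is to bound $\sup_{z\in\R^3} I(z)$ from above by applying Simon's monotonicity identity \eqref{eq:mon-form} to integral varifolds built out of spherical caps that realize the prescribed Dirichlet data $(b_p,\eta_\tau)$. Since $I(z) = I_0(z) + I_1(z)$ splits over the two boundary circles $C_y \defeq b_p(\{y\}\times\S^1)$, with $I_y(z)$ depending only on the $y$-th component of the boundary data, each boundary component can be treated independently with its own test surface and the resulting bounds summed.

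For $y\in\{0,1\}$, the assumption $\tau_y^{(1)}\neq 0$ guarantees the existence of a unique sphere $S_y$ centered on the rotation axis at $c_y = p_y^{(1)} + p_y^{(2)}\tau_y^{(2)}/\tau_y^{(1)}$ with radius $r_y = p_y^{(2)}/|\tau_y^{(1)}|$ containing $C_y$. A cross-product computation at any point of $C_y$ identifies the unique spherical cap $\Sigma_y\subset S_y$ whose outward co-normal along $C_y$ equals $\eta_\tau(y,\cdot)$; a short case analysis on $\sgn(\tau_y^{(1)})$ then shows that, uniformly in this sign,
\[\mathrm{Area}(\Sigma_y) = 2\pi r_y^2\bigl(1+(-1)^y\tau_y^{(2)}\bigr)\quad\text{and}\quad\W(\Sigma_y) = 2\pi\bigl(1+(-1)^y\tau_y^{(2)}\bigr).\]

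Writing $\psi_z \defeq \bigl|\tfrac{\vec H}{2} + \tfrac{(x-z)^\bot}{|x-z|^2}\bigr|^2\geq 0$, passing to the limits $\sigma\searrow 0$, $\rho\nearrow\infty$ in \eqref{eq:mon-form} applied to the varifold $V_{\Sigma_y}$ (whose boundary measure and co-normal match the $y$-th component of $(b_p,\eta_\tau)$) yields the identity
\[\pi\theta^2(\mu_{V_{\Sigma_y}},z) + \int_{\Sigma_y}\psi_z\,d\mu = \frac{\W(\Sigma_y)}{4} + \frac{I_y(z)}{2},\]
while the same limiting identity applied to the closed sphere $S_y$ (no boundary) gives $\int_{S_y}\psi_z\,d\mu = \pi\bigl(1-\theta^2(\mu_{S_y},z)\bigr)\leq\pi$. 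Since $\psi_z\geq 0$ on $S_y$, in particular $\int_{\Sigma_y}\psi_z\leq\pi$, so for $z\in\R^3$ outside $S_y\cup C_y$ (where the density term vanishes),
\[I_y(z)=2\int_{\Sigma_y}\psi_z - \frac{\W(\Sigma_y)}{2}\leq 2\pi - \pi\bigl(1+(-1)^y\tau_y^{(2)}\bigr) = \pi - \pi(-1)^y\tau_y^{(2)};\]
the remaining case $z\in S_y\setminus C_y$ (where $\psi_z\equiv 0$ pointwise on $S_y$) is handled by a short case distinction on whether $z\in\Sigma_y$ or $z\in S_y\setminus\Sigma_y$, producing the same bound. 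Summing over $y=0,1$ gives $I(z)\leq 2\pi + \pi\tau_y^{(2)}\big|_{y=0}^1$, and substitution into \eqref{eq:def-cly-p-tau} finishes the proof. The main obstacle is the correct identification of the cap $\Sigma_y$ (i.e.\ selecting which of the two caps bounded by $C_y$ has outward co-normal $\eta_\tau(y,\cdot)$) and the verification that its area admits the clean sign-uniform expression $2\pi r_y^2(1+(-1)^y\tau_y^{(2)})$, which needs a careful bookkeeping across the subcases $\sgn(\tau_y^{(1)})=\pm(-1)^{y+1}$.
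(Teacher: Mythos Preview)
Your proposal is correct and takes essentially the same approach as the paper: both construct spherical caps $\Sigma_y\subset S_y$ realizing the prescribed Dirichlet data on each boundary circle, apply the monotonicity identity \eqref{eq:mon-form} to $\Sigma_y$ to express $I_y(z)$ in terms of $\theta^2(\mu_{V_{\Sigma_y}},z)$, $\int_{\Sigma_y}\psi_z$, and $\W(\Sigma_y)$, and then bound $\theta^2\pi+\int_{\Sigma_y}\psi_z\leq\pi$ by applying the same identity to the full closed sphere $S_y$. The only differences are organizational---the paper first rewrites $C_{\mathrm{LY}}(p,\tau)$ via \eqref{eq:rep-of-cly} and then proves $\W(V_0)+\W(V_1)=T(\tau)$ and $\sup\sum_y(\theta^2\pi+\int\psi_z)\leq 2\pi$ as two separate lemmas, whereas you bound each $I_y(z)$ directly and sum---and a minor omission: your case distinction covers $z\notin S_y$ and $z\in S_y\setminus C_y$ but not $z\in C_y$, which the paper handles via \cite[(4.2)]{novagapozzetta2020} (or could be closed by continuity of $I$).
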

To this end, we make the following construction. First, consider the parallel circles
\begin{equation}
    C_y = \{(p_y^{(1)},p_y^{(2)}\cos(\theta),p_y^{(2)}\sin(\theta))^t:\theta\in\R\}\quad\text{for $y=0,1$}.
\end{equation}
Consider now an integer rectifiable $2$-varifold $V_y$ with generalized mean curvature $\vec{H}_y$, finite mass and Willmore energy such that, for
\begin{equation}
    \eta_y(p_y^{(1)},p_y^{(2)}\cos\theta,p_y^{(2)}\sin\theta) = (-1)^{1+y} \bigl(\tau_y^{(1)},\tau_y^{(2)}\cos\theta,\tau_y^{(2)}\sin\theta\bigr)^t\quad\text{for $\theta\in\R$},
\end{equation}
one has $\sigma_{V_y}=\Haus^1\measurerestr C_y$ and $\eta_{V_y}=\eta_y$. Using \cite[(B.8)]{novagapozzetta2020} for the case $z\notin C_y$ and \cite[(4.2)]{novagapozzetta2020} for $z\in C_y$, one finds
\begin{equation}
    \lim_{t\searrow 0} A_z(t) = \theta^2(\mu_{V_y},z)\pi\quad\text{for all $z\in\R^3$}.
\end{equation}
Moreover, estimating $\int_{B_t(z)} \frac{\langle\vec{H}_y,x-z\rangle}{t^2}\dd\mu_{V_y}$ as in \cite[p. 555]{novagapozzetta2020} and using $\mu_{V_y}(\R^3)<\infty$, one finds
\begin{equation}
    \lim_{t\to\infty} A_z(t) = \frac{\W(V_y)}{4} + \frac12 \int_{\R^3} \frac{\langle \eta_{V_y}(x),x-z\rangle}{|x-z|^2}\dd\sigma_{V_y}(x).
\end{equation}
Using \cref{eq:mon-form}, one concludes
\begin{equation}\label{eq:conseq-of-mon-form}
    \theta^2(\mu_{V_y},z)\pi + \int_{\R^3} \Bigl| \frac{\vec{H}_y}{2} + \frac{(x-z)^{\bot}}{|x-z|^2} \Bigr|^2\dd\mu_{V_y} = \frac{\W(V_y)}{4} + \frac12 \int_{\R^3} \frac{\langle \eta_{V_y},x-z\rangle}{|x-z|^2}\dd\sigma_{V_y}.
\end{equation}
We now make a specific choice for $V_y$. There exist $R_y>0$, $x_y\in\R\times\{0\}\times\{0\}$ and a smooth $2$-dimensional submanifold $M_y\subseteq\S^2_{R_y,x_y}\vcentcolon=x_y+R_y\S^2\subseteq\R^3$ such that $V_y\vcentcolon=v(M_y)$ satisfies $\sigma_{V_y}=\Haus^1\measurerestr C_y$, $\eta_{V_y}=\eta_y$. Using \cref{eq:conseq-of-mon-form,eq:def-cly-p-tau}, one finds
\begin{align}
    C_{\mathrm{LY}}&(p,\tau) = 8\pi + \inf_{z\in\R^3}\sum_{y=0,1} \Bigl( \W(V_y) - 4\theta^2(\mu_{V_y},z)\pi - 4\int_{\R^3} \Bigl| \frac{\vec{H}_y}{2} + \frac{(x-z)^{\bot}}{|x-z|^2} \Bigr|^2\dd\mu_{V_y} \Bigr)\\
    &=8\pi+\W(V_0)+\W(V_1) - 4\sup_{z\in\R^3} \sum_{y=0,1}\Bigl( \theta^2(\mu_{V_y},z)\pi + \int_{\R^3} \Bigl| \frac{\vec{H}_y}{2} + \frac{(x-z)^{\bot}}{|x-z|^2} \Bigr|^2\dd\mu_{V_y} \Bigr).\label{eq:rep-of-cly}
\end{align}
\begin{lemma}
    One has
    \begin{equation}
        \W(V_0)+\W(V_1)= T(\tau).
    \end{equation}
\end{lemma}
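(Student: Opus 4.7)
The plan is to exploit the fact that each $V_y$ is supported on a subset $M_y$ of a round sphere $\S^2_{R_y,x_y}$, so that the mean curvature vector has constant length $|\vec{H}_y|=1/R_y$ pointwise, and therefore
\begin{equation*}
    \W(V_y) = \frac{\Haus^2(M_y)}{R_y^2}.
\end{equation*}
Thus the whole statement reduces to an explicit geometric computation: determine $R_y$, identify which spherical cap $M_y$ is, and integrate.

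First I would parametrize the sphere in spherical coordinates about the $x$-axis, so that a point on $\S^2_{R_y,x_y}$ takes the form $x_y + R_y(\cos\phi,\sin\phi\cos\psi,\sin\phi\sin\psi)^t$ with $\phi\in[0,\pi]$ and $\psi\in[0,2\pi)$. Writing $x_y=(a_y,0,0)^t$, the circle $C_y$ corresponds to a fixed polar angle $\phi=\phi_y$ satisfying $\cos\phi_y = (p_y^{(1)}-a_y)/R_y$ and $\sin\phi_y = p_y^{(2)}/R_y$, and $M_y$ is one of the two caps $\{\phi\le\phi_y\}$ or $\{\phi\ge\phi_y\}$. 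This choice, together with the radius $R_y=p_y^{(2)}/|\tau_y^{(1)}|$, is forced by the boundary condition $\eta_{V_y}=\eta_y$: computing the unit outer co-normal of either cap along $C_y$ in the above parametrization and matching coefficients against the explicit form of $\eta_y$ yields, according to the sign of $(-1)^y\tau_y^{(1)}$, either $\cos\phi_y=(-1)^{1+y}\tau_y^{(2)}$ (for the cap $\{\phi\le\phi_y\}$) or $\cos\phi_y=(-1)^{y}\tau_y^{(2)}$ (for the cap $\{\phi\ge\phi_y\}$).

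The final step is the area computation. The two caps have areas $2\pi R_y^2(1-\cos\phi_y)$ and $2\pi R_y^2(1+\cos\phi_y)$ respectively, and in either of the two sign cases above this evaluates to $2\pi R_y^2\bigl(1+(-1)^y\tau_y^{(2)}\bigr)$. Dividing by $R_y^2$ gives
\begin{equation*}
    \W(V_y) = 2\pi\bigl(1+(-1)^y\tau_y^{(2)}\bigr),
\end{equation*}
and summing over $y\in\{0,1\}$ produces $2\pi(1+\tau_0^{(2)})+2\pi(1-\tau_1^{(2)})=4\pi-2\pi\tau_y^{(2)}\big|_{y=0}^1=T(\tau)$, as claimed.

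The one point that needs care is the sign bookkeeping: there are two geometric configurations according to which side of $C_y$ the sphere center lies on, and in each the correct cap is singled out differently by the co-normal condition. The reassuring observation --- and really the only nontrivial content beyond routine parametric calculation --- is that both configurations produce the identical Willmore-energy formula $2\pi(1+(-1)^y\tau_y^{(2)})$, so the statement is insensitive to the particular choice of $V_y$ permitted by the hypotheses.
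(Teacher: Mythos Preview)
Your proof is correct and follows essentially the same route as the paper's: both arguments use that on a round sphere $|\vec{H}|$ is constant, so $\W(V_y)=\Haus^2(M_y)/R_y^2$, then compute the area of the correct spherical cap by matching the outward co-normal to $\eta_y$ and arrive at $\W(V_y)=2\pi(1+(-1)^y\tau_y^{(2)})$. The only cosmetic differences are that the paper first rescales to the unit sphere (invoking scale invariance of $\W$) and parametrizes via the profile curve $\gamma(x)=(\cos x,\sin x)$, whereas you work directly at radius $R_y$ in spherical coordinates; your case split on the sign of $(-1)^y\tau_y^{(1)}$ is the exact analogue of the paper's split on the sign of $\tau^{(1)}$ once the boundary sign $(-1)^{1+y}$ in $\eta_y$ is absorbed.
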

\begin{proof}
    First consider any $\tau\in\S^1$ with $\tau^{(1)}\neq 0$ and the spherical cut-out $M\subseteq\S^2$ as above with outer co-normal induced by rotating $\tau$ as in the construction above. 

    \textbf{Case 1:} $\tau^{(1)}<0$. Choosing $\theta\in (0,\pi)$ such that
    \begin{equation}
        \begin{pmatrix}
            -\sin(\theta)\\
            \cos(\theta)
        \end{pmatrix} = \tau,
    \end{equation}
    we can parametrize $M$ by the surface of revolution associated to $\gamma(x)=(\cos(x),\sin(x))$ with $x\in[0,\theta)$. Since the mean curvature of $\S^2$ has length $1$ everywhere,
    \begin{equation}\label{eq:app-lem1-1}
        \W(M) = \Haus^2(M) = 2\pi\int_0^{\theta} \gamma^{(2)}(x)|\gamma'(x)|\dd x = 2\pi(1-\cos(\theta)) = 2\pi(1-\tau^{(2)}).
    \end{equation}
    \textbf{Case 2:} $\tau^{(1)}>0$. With similar arguments, one also obtains \cref{eq:app-lem1-1}.

    Using that the Willmore energy is invariant with respect to scaling and translations, that $-\tau_0$ induces the outer co-normal of $V_0$ and that $\tau_1$ induces the outer co-normal of $V_1$, \cref{eq:app-lem1-1} yields
    \begin{align}
        \W(V_0) + \W(V_1) = 2\pi(1+1-\tau_1^{(2)}+\tau_0^{(2)}) = 4\pi -2\pi \tau^{(2)}_y\Big|_{y=0}^1 = T(\tau).\qquad\qedhere
    \end{align}
\end{proof}
\begin{lemma}
    It holds that
    \begin{equation}
        \sup_{z\in\R^3} \sum_{y=0,1}\Bigl( \theta^2(\mu_{V_y},z)\pi + \int_{\R^3} \Bigl| \frac{\vec{H}_y(x)}{2} + \frac{(x-z)^{\bot}}{|x-z|^2} \Bigr|^2\dd\mu_{V_y}(x) \Bigr) \leq 2\pi.
    \end{equation}
\end{lemma}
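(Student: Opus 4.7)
The plan is to bound each $y$-summand separately by $\pi$, exploiting that $M_y$ is an open subset of the full round sphere $\S^2_{R_y,x_y}$, whose Willmore energy is exactly $4\pi$ and whose ambient mean curvature vector agrees with $\vec{H}_y$ at every point of $M_y$. The argument will then conclude simply by summing over $y=0,1$ and taking the supremum in $z$.

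The key first step will be a closed-sphere version of the monotonicity identity. Applying the same derivation that produced \cref{eq:conseq-of-mon-form} to the closed varifold $V_{\S^2_{R_y,x_y}}$ associated with the entire sphere, the boundary term vanishes because $\sigma_{V_{\S^2_{R_y,x_y}}}=0$, and the right-hand side collapses to $\W(\S^2_{R_y,x_y})/4 = \pi$. This yields
\begin{equation}\label{eq:plan-sphere}
    \theta^2(\mu_{V_{\S^2_{R_y,x_y}}},z)\,\pi + \int_{\S^2_{R_y,x_y}} \Bigl|\frac{\vec{H}_y(x)}{2} + \frac{(x-z)^{\bot}}{|x-z|^2}\Bigr|^2 \dd\Haus^2(x) = \pi
\end{equation}
for every $z \in \R^3$. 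Since $M_y \subseteq \S^2_{R_y,x_y}$ carries the same integrand and the latter is pointwise non-negative, restricting the integral from the full sphere to $M_y$ can only decrease it. Furthermore, $\mu_{V_y} \leq \mu_{V_{\S^2_{R_y,x_y}}}$ as Radon measures yields the pointwise density comparison $\theta^2(\mu_{V_y},z) \leq \theta^2(\mu_{V_{\S^2_{R_y,x_y}}},z)$. Substituting both estimates into \cref{eq:plan-sphere} produces
\begin{equation}
    \theta^2(\mu_{V_y},z)\,\pi + \int_{\R^3}\Bigl|\frac{\vec{H}_y(x)}{2} + \frac{(x-z)^{\bot}}{|x-z|^2}\Bigr|^2\dd\mu_{V_y}(x) \leq \pi
\end{equation}
for each $y \in \{0,1\}$ and each $z \in \R^3$, and summing over $y=0,1$ is exactly the claim.

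The only genuinely subtle point is establishing the closed-sphere identity \cref{eq:plan-sphere}. One option is to rerun Simon's derivation in \cite[Appendix B]{novagapozzetta2020} in the strictly simpler case $\sigma_V=0$; a cleaner alternative is to observe directly that for the closed sphere one has $\lim_{t\searrow 0}A_z(t)=\theta^2(\mu_{V_{\S^2_{R_y,x_y}}},z)\pi$ and $\lim_{t\nearrow\infty}A_z(t)=\W(\S^2_{R_y,x_y})/4=\pi$, so that passing to both limits in \cref{eq:mon-form} (whose boundary contribution to $A_z$ now vanishes identically) gives \cref{eq:plan-sphere}. Beyond that, the comparison step is genuinely elementary, relying only on the pointwise non-negativity of the monotonicity integrand and the monotonicity of the weight measure under inclusion of supports.
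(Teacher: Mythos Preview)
Your proof is correct and follows essentially the same route as the paper: apply \cref{eq:conseq-of-mon-form} to the closed sphere $\S^2_{R_y,x_y}$ (where the boundary term vanishes and the right-hand side equals $\W(\S^2_{R_y,x_y})/4=\pi$), then compare both the density and the non-negative integrand with those of $V_y$ using $M_y\subseteq\S^2_{R_y,x_y}$. The only cosmetic point is that in your display for the closed-sphere identity the symbol $\vec{H}_y$ should strictly be $\vec{H}_{\S^2_{R_y,x_y}}$, since the integration is over the full sphere; you already note that the two agree on $M_y$, which is what makes the subsequent comparison valid.
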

\begin{proof}
    To this end, simply note that \cref{eq:conseq-of-mon-form} applied to $\S^2_{R_y,x_y}$, a manifold without boundary, yields for $y=0,1$
    \begin{align}
        &\theta^2(\mu_{V_y},z)\pi+\int_{\R^3} \Bigl| \frac{\vec{H}_y(x)}{2} + \frac{(x-z)^{\bot}}{|x-z|^2} \Bigr|^2\dd\mu_{V_y}(x) \\
        &\leq \theta^2(\Haus^2\measurerestr\S^2_{R_y,x_y},z)\pi+ \int_{\S^2_{R_y,x_y}} \Bigl| \frac{\vec{H}_{\S^2_{R_y,x_y}}(x)}{2} + \frac{(x-z)^{\bot}}{|x-z|^2} \Bigr|^2\dd\Haus^2(x) = \frac{\W(\S^2_{R_y,x_y})}{4} = \pi \label{eq:app-lem2-3},
    \end{align}
    using that the support of $V_y$ is contained in the sphere $\S^2_{R_y,x_y}$.
\end{proof}

\begin{proof}[Proof of \cref{prop:battle-of-thresholds}.]
    The claim is concluded by combining \cref{eq:rep-of-cly} with both Lemmata above.
\end{proof}


\section{Explicit computation of $C_{\mathrm{LY}}^{\mathrm{rot}}(p,\tau)$ and comparison to \cite{eichmann2024}}
\label{app:est-cly-rot}

Returning to \cref{eq:def-cly-p-tau}, for $z=h e_1$ where $h\in\R$ and $e_1=(1,0,0)^t$, by direct computation, 
\begin{equation}\label{eq:expl-comp-integral}
    \int_{\partial C} \frac{\langle\eta_{\tau}(x),b_p(x)-z\rangle}{|b_p(x)-z|^2}\dd\mu_{b_p^*\langle\cdot,\cdot\rangle}(x) = 2\pi \Bigl[ p_y^{(2)}\frac{\tau_y^{(1)}(p_y^{(1)}-h)+\tau_y^{(2)}p_y^{(2)}}{(p_y^{(1)}-h)^2+(p_y^{(2)})^2} \Big|_{y=0}^1 \Bigr],
\end{equation}
and plugging this into \cref{eq:def-cly-p-tau} yields the explicit formula \cref{eq:cly-rot-explicit}. Particularly, if one considers the case of horizontal boundary data, i.e. $\tau_0=\tau_1=(1,0)^t$, $p_0=(-1,\alpha_-)^t$ and $p_1=(1,\alpha_+)^t$ for $\alpha_-,\alpha_+>0$, one finds
\begin{align}
    C_{\mathrm{LY}}^{\mathrm{rot}}(p,\tau) &= 4\pi\Bigl[2-\sup_{h\in\R}\Bigl(\frac{\alpha_+(1-h)}{(1-h)^2+(\alpha_+)^2} +\frac{\alpha_-(1+h)}{(1+h)^2+(\alpha_-)^2} \Bigr) \Bigr]\\
    &= 8\pi+4\pi\inf_{h\in\R}\Bigl(\frac{\alpha_+(h-1)}{(1-h)^2+(\alpha_+)^2} -\frac{\alpha_-(1+h)}{(1+h)^2+(\alpha_-)^2} \Bigr)
\end{align}
also cf. \cite[Theorem 1.2]{eichmann2024}. Next, adapting our methodology from the previous section, using Simon's monotonicity formula, we show that the energy thresholds in \cref{eq:en-thresh} and \cite[Theorem 1.1]{eichmann2024} are equivalent for all possible boundary conditions.

To this end, we rewrite the main players in defining the threshold in \cite[Theorem~1.1]{eichmann2024} with this article's notation. If $p_0,p_1\in\H^2$ and $\tau_0,\tau_1\in\S^1$, for $h\in\R$, denote by $c_y^h\colon[0,1)\to\H^2$ for $y=0,1$ a parametrization of a segment of the profile curve of a sphere, a disk or of a Möbius transformed inverted catenoid such that, for $y=0,1$, 
\begin{equation}\label{eq:cyh-bdry}
    c_y^h(0)=p_y,\quad \lim_{x\nearrow 1}c_y^h(x)=(h,0)^t\quad \text{and}\quad\frac{\partial_xc_y^h(0)}{|\partial_xc_y^h(0)|} = (-1)^y \tau_y.
\end{equation}
We then have the following
\begin{proposition}\label{prop:sl-gleich-eic}
    Writing $\Gamma_y=\{y\}\times\S^1$, one finds that
    \begin{equation}\label{eq:interp-of-bdry-integral}
        \W(f_{c_y^h}) = 4\pi - 2\int_{\Gamma_y} \frac{\langle \eta_{\tau}(x),b_p(x)-h e_1\rangle}{|b_p(x)-h e_1|^2}\dd \mu_{b_p^{*}\langle\cdot,\cdot\rangle}(x)
    \end{equation}
    for $y=0,1$, using \cref{eq:def-bp-etatau}, where $f_{c_y^h}$ is the immersion one obtains by rotating $c_y^h$.
\end{proposition}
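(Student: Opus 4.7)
The plan is to apply Simon's monotonicity formula \cref{eq:mon-form} to the integral $2$-varifold $V:=V_{f_{c_y^h}}$ with centre $z=he_1\in\R\times\{0\}\times\{0\}$, and to exploit that for each of the three admissible profile shapes the monotonicity defect integrand vanishes identically on $\mathrm{supp}\,\mu_V$. Concretely, passing to the limits $\sigma\searrow 0$ and $\rho\nearrow\infty$ in \cref{eq:mon-form} as in the derivation of \cref{prop:Li-Yau} yields
\begin{equation}\label{eq:plan-mon}
    \theta^2(\mu_V,he_1)\pi + \int_{\R^3}\Bigl|\tfrac{\vec H(x)}{2}+\tfrac{(x-he_1)^{\bot}}{|x-he_1|^2}\Bigr|^2\dd\mu_V(x) = \frac{\W(V)}{4} + \frac12\int_{\R^3}\frac{\langle\eta_V,x-he_1\rangle}{|x-he_1|^2}\dd\sigma_V,
\end{equation}
so it will suffice to establish the pointwise identity (i) $\tfrac{\vec H(x)}{2}+\tfrac{(x-he_1)^{\bot}}{|x-he_1|^2}=0$ on $\Sigma:=\mathrm{supp}\,\mu_V$ together with (ii) $\theta^2(\mu_V,he_1)=1$.

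For (i), each of the three profile choices produces upon rotation about the $x$-axis a classical surface through $he_1$: a round sphere centred on the axis (spherical-arc profile), a disk in a plane perpendicular to the axis and containing $he_1$ (vertical-segment profile), or the inversion at $he_1$ of a catenoid coaxial with $\R e_1$ (Möbius-inverted catenary profile). In the planar case both summands of the defect integrand vanish identically on $\Sigma$. For a sphere of radius $R$ centred at $z_0$ on the axis with $|z_0-he_1|=R$, a direct expansion gives $|x-he_1|^2=2\langle x-he_1,x-z_0\rangle$, hence $(x-he_1)^{\bot}=|x-he_1|^2(x-z_0)/(2R^2)$, which cancels $\vec H/2=-(x-z_0)/(2R^2)$ in the orientation induced by the outward unit normal. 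For the inverted catenoid the identity is precisely the classical conformal-covariance statement: under inversion at $he_1$, the defect vector transforms (up to the standard conformal factor) into the mean-curvature vector of the pre-image, which vanishes since catenoids are minimal. For (ii), $\Sigma$ is a smoothly embedded cap in a neighbourhood of the apex $he_1$; the only delicate case is the inverted catenoid, where one uses crucially that $c_y^h$ traces out only one of the two strands of the inverted-catenary loop terminating at $(h,0)$, so that no double sheet forms at $he_1$.

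Combining (i) and (ii), \cref{eq:plan-mon} reduces to $\pi=\W(V)/4+\tfrac12\int\langle\eta_V,x-he_1\rangle|x-he_1|^{-2}\dd\sigma_V$. The singular part $\sigma_V$ is concentrated on the boundary circle $f_{c_y^h}(\{0\}\times\S^1)=b_p(\Gamma_y)$, and the sign convention $\partial_xc_y^h(0)/|\partial_xc_y^h(0)|=(-1)^y\tau_y$ in \cref{eq:cyh-bdry} is chosen precisely so that the outward co-normal of $\Sigma$ along this circle matches $\eta_\tau$ via $b_p$; this identifies the boundary integral on the right of \cref{eq:plan-mon} with the one in \cref{eq:interp-of-bdry-integral}, and a rearrangement yields the claim. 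The main obstacle is step (i) for the inverted catenoid: for spheres and planes the vanishing is elementary, but the catenoid case must be extracted either from the conformal covariance of the defect vector under inversion at $he_1$ or, failing that, from a direct parametric computation using the explicit formula for the inverted-catenoid profile.
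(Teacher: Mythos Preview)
Your proposal is correct and follows the same monotonicity-formula skeleton as the paper, but you diverge from the paper in how you dispose of the defect integral. You verify the pointwise identity $\tfrac{\vec H}{2}+\tfrac{(x-he_1)^{\bot}}{|x-he_1|^2}=0$ directly on $\Sigma$ --- by an elementary computation for the sphere, trivially for the disk, and by invoking the conformal covariance of the defect vector under inversion for the catenoid. The paper instead applies \cref{eq:conseq-of-mon-form} a second time, to the \emph{closed} surface $V^{\mathrm{cl}}$ (the full round sphere through $he_1$, respectively the full closed inverted catenoid): since $\W(V^{\mathrm{cl}})=4\pi$ with $\theta^2(\mu_{V^{\mathrm{cl}}},he_1)=1$ in the spherical case, and $\W(V^{\mathrm{cl}})=8\pi$ with $\theta^2(\mu_{V^{\mathrm{cl}}},he_1)=2$ in the catenoid case, the boundary-free version of \cref{eq:conseq-of-mon-form} forces the defect integral over the closed surface to vanish; the integral over $\mathrm{supp}\,\mu_V\subseteq\mathrm{supp}\,\mu_{V^{\mathrm{cl}}}$ is then squeezed to zero by non-negativity. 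The disk case is handled in the paper by a direct evaluation of both sides of \cref{eq:interp-of-bdry-integral} via \cref{eq:expl-comp-integral}, whereas you fold it into the same monotonicity argument.

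The paper's sandwich argument is more self-contained --- it needs only the values $\W(\S^2)=4\pi$ and $\W(\text{inverted catenoid})=8\pi$ and never opens up the geometry of the defect vector --- while your route is more direct but imports (or must reprove) the conformal transformation law for $\tfrac{\vec H}{2}+\tfrac{(x-z)^{\bot}}{|x-z|^2}$ under inversion at $z$, which sits outside the toolkit assembled in the paper.
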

Especially, summing over $y=0,1$ in \cref{eq:interp-of-bdry-integral}, one finds
\begin{equation}
    \W(f_{c_0^h})+\W(f_{c_1^h}) = 8\pi - 2\int_{\partial C} \frac{\langle \eta_{\tau}(x),b_p(x)-h e_1\rangle}{|b_p(x)-h e_1|^2}\dd \mu_{b_p^{*}\langle\cdot,\cdot\rangle}(x).
\end{equation}
That is, recalling \cref{eq:def-cly-p-tau}, the energy thresholds of \cref{thm:main} and \cite[Theorem 1.1]{eichmann2024} coincide.
\begin{proof}[Proof of \cref{prop:sl-gleich-eic}.]
    Suppose first of all that $c^h_y$ parametrizes a segment of the profile curve of a Möbius transformed inverted catenoid. Moreover, denote by $V_{c^h_y}$ the varifold one obtains by rotating $c^h_y$ and by $V_{c^h_y}^{\mathrm{cl}}$ the associated inverted catenoid without boundary whose support contains the support of $V_{c_y^h}$. Using \cref{eq:cyh-bdry}, one finds $\theta^2(\mu_{V_{c^h_y}^{\mathrm{cl}}},he_1)\pi=2\pi$, i.e. the closed inverted catenoid has density $2$ at the point $he_1$ where it intersects the rotation axis. Moreover, as $\W(V_{c^h_y}^{\mathrm{cl}})=8\pi$, proceeding as in \cref{eq:conseq-of-mon-form} yields
    \begin{equation}
        2\pi + \int_{\R^3} \Bigl| \frac{\vec{H}_{V_{c^h_y}^{\mathrm{cl}}}(x)}{2} + \frac{(x-he_1)^{\bot}}{|x-he_1|^2} \Bigr|^2\dd\mu_{V_{c^h_y}^{\mathrm{cl}}}(x) = 2\pi.
    \end{equation}
    Particularly, 
    \begin{equation}
        0\leq \int_{\R^3} \Bigl| \frac{\vec{H}_{V_{c^h_y}}(x)}{2} + \frac{(x-he_1)^{\bot}}{|x-he_1|^2} \Bigr|^2\dd\mu_{V_{c^h_y}}(x) \leq \int_{\R^3} \Bigl| \frac{\vec{H}_{V_{c^h_y}^{\mathrm{cl}}}(x)}{2} + \frac{(x-he_1)^{\bot}}{|x-he_1|^2} \Bigr|^2\dd\mu_{V_{c^h_y}^{\mathrm{cl}}}(x) = 0.
    \end{equation}
    By \cref{eq:cyh-bdry}, we have $\theta^2(\mu_{V_{c^h_y}},he_1)\pi=\pi$, i.e. the varifold obtained by rotating $c^h_y$ only has density equal to $1$ at the point where it intersects the rotation axis. Therefore, \cref{eq:conseq-of-mon-form} applied to $V_{c_y^h}$ yields \cref{eq:interp-of-bdry-integral}, using \cref{eq:def-bp-etatau}. 
    
    With the very same argument, one proves the claim also in the case where $c^h_y$ parametrizes a segment of the profile curve of a sphere.

    Finally, if $c^h_y$ parametrizes the profile curve of a disk, then $\W(f_{c_0^h})=0$. Moreover, using \cref{eq:cyh-bdry}, $\tau_y=(-1)^y(0,-1)^t$ and $p_y^{(1)}=h$. Thus, as in \cref{eq:expl-comp-integral}, using \cref{eq:def-bp-etatau},
    \begin{align}
        \int_{\Gamma_y} \frac{\langle \eta_{\tau}(x),b_p(x)-h e_1\rangle}{|b_p(x)-h e_1|^2}\dd \mu_{b_p^{*}\langle\cdot,\cdot\rangle}(x) &= 2\pi (-1)^{y+1} \Bigl( p_y^{(2)}\frac{\tau_y^{(1)}(p_y^{(1)}-h)+\tau_y^{(2)}p_y^{(2)}}{(p_y^{(1)}-h)^2+(p_y^{(2)})^2} \Bigr)\\
        &=2\pi (-1)^{y+1} \tau_y^{(2)} = 2\pi
    \end{align}
    and \cref{eq:interp-of-bdry-integral} also follows in this case.
\end{proof}

\section*{Acknowledgment}
The author would like to thank Anna Dall’Acqua for helpful discussions and Sascha Eichmann for his comments on comparing \cref{thm:main} and \cite[Theorem 1.1]{eichmann2024}.


\begin{thebibliography}{DMSS20}

    \bibitem[All72]{allard1972}
    William~K. Allard.
    \newblock On the first variation of a varifold.
    \newblock {\em Ann. of Math. (2)}, 95:417--491, 1972.
    
    \bibitem[BG86]{bryantgriffiths1986}
    Robert Bryant and Phillip Griffiths.
    \newblock Reduction for constrained variational problems and {$\int{1\over
      2}k^2\,ds$}.
    \newblock {\em Amer. J. Math.}, 108(3):525--570, 1986.
    
    \bibitem[Bry84]{bryant1984}
    Robert Bryant.
    \newblock A duality theorem for {W}illmore surfaces.
    \newblock {\em J. Differential Geom.}, 20(1):23--53, 1984.
    
    \bibitem[Can70]{canham1970}
    Peter Canham.
    \newblock The minimum energy of bending as a possible explanation of the
      biconcave shape of the human red blood cell.
    \newblock {\em J. Theor. Biol.}, 26(1):61--81, 1970.
    
    \bibitem[DDG08]{dallacquadeckelnickgrunau2008}
    Anna Dall'Acqua, Klaus Deckelnick, and Hans-Christoph Grunau.
    \newblock Classical solutions to the {D}irichlet problem for {W}illmore
      surfaces of revolution.
    \newblock {\em Adv. Calc. Var.}, 1(4):379--397, 2008.
    
    \bibitem[DFGS11]{dallacquafroehlichgrunauschieweck2011}
    Anna Dall'Acqua, Steffen Fr\"{o}hlich, Hans-Christoph Grunau, and Friedhelm
      Schieweck.
    \newblock Symmetric {W}illmore surfaces of revolution satisfying arbitrary
      {D}irichlet boundary data.
    \newblock {\em Adv. Calc. Var.}, 4(1):1--81, 2011.
    
    \bibitem[DMSS20]{dallacquamullerschatzlespener2020}
    Anna Dall'Acqua, Marius M\"{u}ller, Rainer Sch\"{a}tzle, and Adrian Spener.
    \newblock The {W}illmore flow of tori of revolution.
    \newblock {\em arXiv:2005.13500 [math.AP]}, 2020.
    
    \bibitem[DS17]{dallacquaspener2017}
    Anna Dall'Acqua and Adrian Spener.
    \newblock The elastic flow of curves in the hyperbolic plane.
    \newblock {\em arXiv:1710.09600 [math.AP]}, 2017.
    
    \bibitem[DS18]{dallacquaspener2018}
    Anna Dall'Acqua and Adrian Spener.
    \newblock Circular solutions to the elastic flow in hyperbolic space.
    \newblock {\em Proceedings of the conference Analysis on Shapes of Solutions to
      Partial Differential Equations}, 2082:109--124, 2018.
    
    \bibitem[EG19]{eichmanngrunau2019}
    Sascha Eichmann and Hans-Christoph Grunau.
    \newblock Existence for {W}illmore surfaces of revolution satisfying
      non-symmetric {D}irichlet boundary conditions.
    \newblock {\em Adv. Calc. Var.}, 12(4):333--361, 2019.
    
    \bibitem[Eic16]{eichmann2016}
    Sascha Eichmann.
    \newblock Nonuniqueness for {W}illmore surfaces of revolution satisfying
      {D}irichlet boundary data.
    \newblock {\em J. Geom. Anal.}, 26(4):2563--2590, 2016.
    
    \bibitem[Eic24]{eichmann2024}
    Sascha Eichmann.
    \newblock {I}mproved long time existence for the {W}illmore flow of surfaces of
      revolution with {D}irichlet data.
    \newblock {\em arXiv:2402.05580 [math.AP]}, 2024.
    
    \bibitem[ES24]{eichmannschaetzle2024}
    Sascha Eichmann and Reiner Sch\"{a}tzle.
    \newblock The rotational symmetric {W}illmore boundary problem.
    \newblock {\em Preprint:
      \url{https://www.math.uni-tuebingen.de/user/schaetz/publi/eichmann-schaetzle-24.pdf}
      date of download 12.02.2024}, 2024.
    
    \bibitem[Haw68]{hawking1968}
    Stephen Hawking.
    \newblock Gravitational radiation in an expanding universe.
    \newblock {\em J. Math. Phys.}, 9(4):598--604, 1968.
    
    \bibitem[Hel73]{helfrich1973}
    Wolfgang Helfrich.
    \newblock Elastic properties of lipid bilayers: theory and possible
      experiments.
    \newblock {\em Zeitschrift f{\"u}r Naturforschung C}, 28(11-12):693--703, 1973.
    
    \bibitem[KS01]{kuwertschaetzle2001}
    Ernst Kuwert and Reiner Sch\"{a}tzle.
    \newblock The {W}illmore flow with small initial energy.
    \newblock {\em J. Differential Geom.}, 57(3):409--441, 2001.
    
    \bibitem[KS02]{kuwertschaetzle2002}
    Ernst Kuwert and Reiner Sch\"{a}tzle.
    \newblock Gradient flow for the {W}illmore functional.
    \newblock {\em Comm. Anal. Geom.}, 10(2):307--339, 2002.
    
    \bibitem[KS04]{kuwertschaetzle2004}
    Ernst Kuwert and Reiner Sch\"{a}tzle.
    \newblock Removability of point singularities of {W}illmore surfaces.
    \newblock {\em Ann. of Math. (2)}, 160(1):315--357, 2004.
    
    \bibitem[Lee18]{lee2018}
    John~M. Lee.
    \newblock {\em Introduction to {R}iemannian manifolds}, volume 176 of {\em
      Graduate Texts in Mathematics}.
    \newblock Springer, Cham, second edition, 2018.
    
    \bibitem[LS84]{langersinger1984}
    Joel Langer and David Singer.
    \newblock The total squared curvature of closed curves.
    \newblock {\em J. Differential Geom.}, 20(1):1--22, 1984.
    
    \bibitem[LY82]{liyau1982}
    Peter Li and Shing~Tung Yau.
    \newblock A new conformal invariant and its applications to the {W}illmore
      conjecture and the first eigenvalue of compact surfaces.
    \newblock {\em Invent. Math.}, 69(2):269--291, 1982.
    
    \bibitem[Min12]{mingione2012}
    Giuseppe Mingione.
    \newblock {\em Topics in modern regularity theory}, volume~13.
    \newblock Springer Science \& Business Media, 2012.
    
    \bibitem[MS64]{meyersserrin1964}
    Norman Meyers and James Serrin.
    \newblock {$H=W$}.
    \newblock {\em Proc. Nat. Acad. Sci. U.S.A.}, 51:1055--1056, 1964.
    
    \bibitem[MS20]{muellerspener2020}
    Marius M\"{u}ller and Adrian Spener.
    \newblock On the convergence of the elastic flow in the hyperbolic plane.
    \newblock {\em Geom. Flows}, 5(1):40--77, 2020.
    
    \bibitem[NP20]{novagapozzetta2020}
    Matteo Novaga and Marco Pozzetta.
    \newblock Connected surfaces with boundary minimizing the {W}illmore energy.
    \newblock {\em Math. Eng.}, 2(3):527--556, 2020.
    
    \bibitem[RS23]{ruppscharrer2023}
    Fabian Rupp and Christian Scharrer.
    \newblock Li-{Y}au inequalities for the {H}elfrich functional and applications.
    \newblock {\em Calc. Var. Partial Differential Equations}, 62(2):Paper No. 45,
      43, 2023.
    
    \bibitem[Sch10]{schaetzle2010}
    Reiner Sch\"{a}tzle.
    \newblock The {W}illmore boundary problem.
    \newblock {\em Calc. Var. Partial Differential Equations}, 37(3-4):275--302,
      2010.
    
    \bibitem[Sch22]{scharrer2022}
    Christian Scharrer.
    \newblock Some geometric inequalities for varifolds on {R}iemannian manifolds
      based on monotonicity identities.
    \newblock {\em Ann. Global Anal. Geom.}, 61(4):691--719, 2022.
    
    \bibitem[Sch23]{schlierf2023}
    Manuel Schlierf.
    \newblock {S}ingularities of the hyperbolic elastic flow: {C}onvergence,
      quantization and blow-ups.
    \newblock {\em arXiv:2311.05978 [math.AP]}, 2023.
    
    \bibitem[Sch24]{schlierf2024}
    Manuel Schlierf.
    \newblock On the convergence of the {W}illmore flow with {D}irichlet boundary
      conditions.
    \newblock {\em Nonlinear Anal.}, 241:113475, 2024.
    
    \bibitem[Sim93]{simon1993}
    Leon Simon.
    \newblock Existence of surfaces minimizing the {W}illmore functional.
    \newblock {\em Comm. Anal. Geom.}, 1(2):281--326, 1993.
    
    \end{thebibliography}

\end{document}